\documentclass[reqno,11pt]{amsart}
\usepackage{a4wide,color,eucal,enumerate,mathrsfs}
\usepackage[normalem]{ulem}
\usepackage{amsmath,amssymb,epsfig,amsthm,commath} %,bbm
\usepackage[latin1]{inputenc}
\usepackage{psfrag}
\usepackage{hyperref,cleveref,tikz}

\usetikzlibrary{decorations.pathreplacing}

\def\dist{{\mathop\mathrm{dist}}}

\def\ez{\epsilon}
\def\eps{\varepsilon}

\def\bint{{\ifinner\rlap{\bf\kern.35em--}
\int\else\rlap{\bf\kern.45em--}\int\fi}\ignorespaces}

\def\bbint{{\ifinner\rlap{\bf\kern.35em--}
\hspace{0.078cm}\int\else\rlap{\bf\kern.45em--}\int\fi}\ignorespaces}

\def\diam{{\mathop\mathrm{\,diam\,}}}

\newtheorem{thm}{Theorem}[section]
\newtheorem{lem}[thm]{Lemma}%[section]     %@@!!@@!!
\newtheorem{prop}[thm]{Proposition}%[section]    %@@!!@@!!
\numberwithin{equation}{section}

\theoremstyle{remark}
\newtheorem{rem}[thm]{Remark}%[section]    %@@!!@@!!

\def\bint{{\ifinner\rlap{\bf\kern.35em--}
\int\else\rlap{\bf\kern.45em--}\int\fi}\ignorespaces}

\usepackage{graphicx}
\usepackage{import}
\usepackage{xifthen}
\usepackage{pdfpages}
\usepackage{transparent}

\title[Global regularity in the planar Chon\'e-Rochet model]{Global regularity and free boundary geometry \\ in the planar Chon\'e-Rochet model}
\author{Shibing Chen, Alessio Figalli, and Yi Ru-Ya Zhang}
\date{\today}

\address
{School of Mathematical Sciences,
University of Science and Technology of China,
Hefei, Anhui 230026, China}
\email{chenshib@ustc.edu.cn}

 \address{ETH Z\"urich, Department of Mathematics, R\"amistrasse 101, 8092, Z\"urich, Switzerland}
 \email{alessio.figalli@math.ethz.ch}  

\address{Academy of Mathematics and Systems Science, the Chinese Academy of Sciences, Beijing 100190, China}
\email{yzhang@amss.ac.cn}

 \thanks{The first author is supported by NSFC No.12225111, NSFC No.12426202, and NSFC No.12141105. The third author is funded by the National Key R\&D Program of China (Grant No. 2025YFA1018400 \&  No. 2021YFA1003100), NSFC grant No. 12288201 \& No. 12571128, the Chinese Academy of Sciences, and CAS Project for Young Scientists in Basic Research, Grant No. YSBR-031.}

\subjclass[2020]{35R35, 49N60, 35B65, 35Q91}
\keywords{Free boundary problems, convexity constraints, global regularity, obstacle problem.}

\begin{document}

\begin{abstract}
In this paper, we study minimizers of the Chon\'e--Rochet variational problem in dimension two. We first establish global $C^1$ regularity on arbitrary bounded convex domains, and then prove global $C^{1,1}$ regularity on bounded strictly convex domains or, more generally, whenever the zero set of $u$ has positive measure. Next, we construct smooth bounded convex domains with a flat boundary segment for which no prescribed modulus of continuity controls the gradient; this shows that, without additional geometric assumptions, global $C^1$ regularity is optimal. Finally, we prove that the tamed free boundary (that is, the interface between the strictly convex and non-strictly convex regions of the solution) is locally a $C^1$ embedded curve, significantly strengthening previously known regularity results.
\end{abstract}

\maketitle

\section{Introduction}

Let $\Omega \subset \mathbb{R}^2$ be a bounded open convex set, and define
\[
\mathcal{K}(\Omega)
:=
\left\{
u:\overline{\Omega}\to\mathbb{R}
\;\middle|\;
u \text{ is convex and nonnegative}
\right\}.
\]
In this paper, we study the unique minimizer of the variational problem
\begin{equation}\label{monopolist}
    \min_{u \in \mathcal{K}(\Omega)} L[u;\Omega],
    \qquad
    L[u;\Omega]
    :=
    \int_{\Omega}
    \left(
        \tfrac12 |Du-x|^2 + u
    \right)\,dx.
\end{equation}
Since $L[\cdot;\Omega]$ is strictly convex, the minimizer is unique.

Problem \eqref{monopolist} has been studied extensively, both numerically and analytically, over the past two decades; see
\cite{CL2001a, EM2010, MO2014, MRZ20252, MRZ2025, MZ2024}
and the references therein.
Concerning regularity, Rochet and Chon\'e \cite{RC98} and Carlier--Lachand-Robert \cite{CL2001}
proved interior $C^1$ regularity of the minimizer, while Caffarelli and Lions \cite{CL2006}
showed that
\[
u\in C^{1,1}_{\mathrm{loc}}(\Omega).
\]
Although \cite{CL2001} claims global $C^1$ regularity, the argument given there only yields
interior $C^1$ regularity. Our first result fills this gap in the planar case.

\begin{thm}\label{thm:C1}
Let $\Omega\subset\mathbb R^2$ be a bounded convex domain.
Then
\[
u\in C^1(\overline\Omega).
\]
\end{thm}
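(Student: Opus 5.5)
We aim to show that $Du$, which is continuous in $\Omega$ by the interior regularity of \cite{RC98, CL2001, CL2006}, extends continuously to $\overline\Omega$. Since $u$ is convex, it suffices to show that at every boundary point $x_0\in\partial\Omega$ the subdifferential of $u$ is a single point, with $Du(x)$ converging to it as $x\to x_0$. Convexity gives upper semicontinuity of $\partial u$, so this is enough. We use two preliminary facts about the minimizer.

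The first fact is that $Du$ is bounded, i.e.\ $u$ is Lipschitz on $\overline\Omega$. To see this, compare $u$ with $\min(u,\ell)$ for a suitable affine or constant competitor, or with the truncation $\max(u,\cdot)$ of its supporting planes. This shows that $|Du|\le C(\operatorname{diam}\Omega)$: otherwise the cut-off would decrease $\frac12|Du-x|^2$ on a set where $|Du|$ is large, at a cost in $\int u$ that is lower order.

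The second fact is the Euler--Lagrange structure. In the region $\{u>0\}$, for all convex perturbations $v$ with $u+\epsilon(v-u)\in\mathcal K$, we have
\[
\int_\Omega (Du-x)\cdot D(v-u)+(v-u)\,dx\ge 0 .
\]
Formally this says $\Delta u-3\le 0$ in the weak sense against admissible directions, i.e.\ $\mu:=\Delta u-3$ is a nonpositive-type measure on the strictly convex part. It also yields the Neumann-type boundary condition $(Du-x)\cdot\nu\ge 0$ on $\partial\Omega$, interpreted in measure sense.

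Now fix $x_0\in\partial\Omega$ and suppose $\partial u(x_0)$ contains a nontrivial segment $[p_1,p_2]$. In dimension two, convexity forces the gradient image near $x_0$ to sweep this segment: there are sequences $x_k^{i}\to x_0$ with $Du(x_k^i)\to p_i$. Blow up at $x_0$ by setting $u_r(y)=(u(x_0+ry)-u(x_0)-p\cdot ry)/r$ with $p\in\partial u(x_0)$. By the Lipschitz bound, $u_r$ converges locally uniformly on the tangent cone $T$ of $\Omega$ at $x_0$, which is a half-plane or a sector of opening $<\pi$. The limit is a one-homogeneous convex function $u_0$ on $T$, and the non-singleton subdifferential says that $u_0$ is not linear. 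The quadratic term $\frac12|Du-x|^2$ scales as $r^2$, matching the Dirichlet part, while $\int u$ scales like $r^3$. Hence $u_0$ minimizes $\int_T\frac12|Du_0-p_0|^2$, for a constant vector $p_0$, among one-homogeneous convex competitors in the Neumann-free sense. Equivalently, $u_0$ is a convex, one-homogeneous, harmonic-in-the-convex-class minimizer on a planar cone. For one-homogeneous $u_0(r,\theta)=rg(\theta)$, convexity means $g''+g\ge0$, and the energy is $\int(g^2+g'^2)\,d\theta$ over an angle interval of length at most $\pi$. We then show the only minimizers are linear, i.e.\ $g''+g=0$. The idea is that replacing $u_0$ by its largest linear minorant on $T$ (or by the linear interpolation of the endpoint values) keeps convexity and strictly decreases the energy, because on an angular interval of length $\le\pi$ the Wirtinger-type inequality applied to $g$ minus the linear interpolant is favorable. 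This yields $\partial u(x_0)$ single-valued, and the upper semicontinuity of $\partial u$ then gives $C^1(\overline\Omega)$.

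The main obstacle is making the blow-up rigorous at the boundary. Several issues arise. The variational inequality must survive the limit even though admissible perturbations must stay convex and nonnegative on all of $\Omega$; we handle this by localizing competitors of the form $\max(u,\ell)$ or $\max(u,\ell-\delta)$ with affine $\ell$, which remain convex. The zero set $\{u=0\}$ may touch $x_0$, where the nonnegativity constraint is active. And if $\Omega$ has a corner, the angular interval is shorter than $\pi$, which helps the Wirtinger estimate. I would first try to avoid full blow-up compactness and argue directly instead. Assuming two distinct limiting gradients $p_1\ne p_2$ at $x_0$, construct the competitor $\tilde u=\max(u,\ell_\delta)$, where $\ell_\delta$ is the affine function through the two supporting planes lowered by $\delta$, so that $\tilde u\ge u$ modifies $u$ only on a small cap near $x_0$ of area $\sim\delta^2/|p_1-p_2|^2$ cut by the boundary. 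Estimate $L[\tilde u]-L[u]<0$ using the fact that on this cap $|Du-x|^2$ exceeds $|D\ell_\delta-x|^2$ by an amount of order $|p_1-p_2|^2$ on a fixed fraction of the cap, while the increase in $\int u$ is $O(\delta\cdot\text{area})$. Planarity enters in controlling the geometry of the cap.
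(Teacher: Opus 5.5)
\textbf{The reduction to a blow-up at $x_0$ does not work.}

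First, at $x_0\in\partial\Omega$ the subdifferential is never a singleton: for any $p\in\partial u(x_0)$ it contains $p+N_\Omega(x_0)$. So the relevant object is the set $D^*u(x_0)$ of limits of $Du(x_k)$ with $x_k\in\Omega$, $x_k\to x_0$.

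More importantly, $D^*u(x_0)$ not being a singleton does \emph{not} force the blow-up $u_0$ to be nonlinear. The blow-up $u_0$ is determined by the directional derivatives $u'(x_0;y)$, $y\in T$, and these cannot distinguish limiting gradients that differ by a vector in the normal cone. For instance, $u(x)=(x_1^2-x_2)_+$ on $\{x_2>0\}$ is convex and locally Lipschitz. It has both $(0,0)$ and $(0,-1)$ as limiting gradients at the origin, yet $u(ry)/r\to0$ on $T$.

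Even granting a nonlinear $u_0$, the limit problem is misidentified. In blown-up variables the cross term $\int(Du-x)\cdot D\phi$ does not scale away. The constant vector $p-x_0$ leaves the boundary integral $\int_{\partial T}\phi\,(p-x_0)\cdot\nu_T$, which is of the same order $r^2$ as the Dirichlet term. It does not vanish in general, and, given $(Du-x)\cdot\nu\ge0$, it penalizes exactly the nonnegative perturbations $\max(u,\ell)-u$ you would use. Moreover, comparing $u_0$ with its largest linear minorant (or any other one-homogeneous competitor) on $B_1\cap T$ changes the data on $\partial B_1\cap T$, so an energy decrease there contradicts nothing. The Wirtinger step therefore says nothing about $u$.

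\textbf{The direct argument fails for the same underlying reason.} As written, the section is even empty: every convex combination of the two supporting planes lies below $u$, so $\ell$ has to be raised, not lowered.

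Pointwise, $|Du-x|^2-|D\ell-x|^2=|Du-D\ell|^2+2(Du-D\ell)\cdot(D\ell-x)$ is of \emph{first} order in $|p_1-p_2|$ and has no sign. The quadratic gain appears only after integrating the cross term by parts. That produces exactly the boundary integral $\int_{\partial S\cap\partial\Omega}w\,(D\ell-x)\cdot\nu_\Omega\,d\mathscr H^1$ of \eqref{var-id}. This term in general has the unfavorable sign (recall $(Du-x)\cdot\nu_\Omega\ge0$) and is of size $\delta\,\mathscr H^1(\partial S\cap\partial\Omega)$.

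In the small-cap geometry you describe, $S$ has diameter $\sim\delta/|p_1-p_2|$ around $x_0$. There the boundary term is of order $\delta^2/|p_1-p_2|$, while the gain $\int_S|Du-D\ell|^2$ is only of order $\delta^2$. So no contradiction arises unless the jump is large. And nothing forces $S$ to be a small cap anyway: only its width is controlled, and its length can be of order one, e.g.\ along a leaf.

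\textbf{What is missing.} Bounding $\mathscr H^1(\partial S\cap\partial\Omega)/|S|$ is the entire difficulty. The paper handles it with structure your proposal never uses:
\begin{enumerate}
\item uniform $C^{1,1}$ bounds off the two-ended leaves (Proposition~\ref{prop:goodC11});
\item a lower bound on the length of two-ended leaves, from Lemma~\ref{lower-bound-length} together with Armstrong's argument;
\item sections tilted toward the far endpoint of a leaf, where the leaf is transversal to $\partial\Omega$ (Lemmas~\ref{lem:local-perturb} and~\ref{lem:strip-C11}). These give $\mathscr H^1(\partial S\cap\partial\Omega)\lesssim|S|$ and rule out two leaves sharing a boundary endpoint;
\item the analysis of Hausdorff limits of leaves, a genuine leaf versus a boundary segment, in Lemma~\ref{lem:leaf-slope-limit}.
\end{enumerate}
Without a substitute for this mechanism, your argument does not close.
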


More recently,  McCann, Rankin and Zhang \cite{MRZ20252} proved that, for polyhedral domains,
the minimizer is $C^{1,1}$ up to the interior of each facet, leaving open the possibility that the
$C^{1,1}$ norm might blow up near the nonsmooth part of the boundary.

Our second main result establishes global $C^{1,1}$ regularity for strictly convex planar domains, and more generally whenever the zero set of $u$ has positive measure.

\begin{thm}\label{C11est}
Let $\Omega \subset \mathbb{R}^2$ be a convex domain, and let $u$ be the minimizer of \eqref{monopolist}. Suppose that there exists $\sigma_0>0$ such that
\[
|\{u=0\}|\ge \sigma_0.
\]
Then there exists a constant $C_0=C_0(\sigma_0,\Omega)$ such that
\[
\|u\|_{C^{1,1}(\Omega)}\le C_0.
\]
In particular, this applies when $\Omega$ is strictly convex.
\end{thm}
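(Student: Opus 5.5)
We have to use the known structure of the Chon\'e--Rochet minimizer. The Euler--Lagrange conditions say that on the set where $u>0$ and $u$ is strictly convex, $\Delta u = 3$. On $\Omega_0=\{u=0\}$ the gradient vanishes. The remaining part $\Omega_1$ (positive, not strictly convex) is foliated by segments along which $u$ is affine. Mass balance gives $\int_{\Omega}(1-(\Delta u -2)) = 0$ in the appropriate measure sense, so the Neumann-type condition $(Du-x)\cdot\nu\ge 0$ holds on $\partial\Omega$, with equality where $u>0$. Interior $C^{1,1}$ is due to Caffarelli--Lions. The task is therefore to control $D^2u$ up to $\partial\Omega$, where the danger is that leaves of the foliation degenerate near the boundary.

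\textbf{Step 1: the zero set absorbs mass, so $|Du|$ and the geometry are controlled.} Theorem~\ref{thm:C1} gives $u\in C^1(\overline\Omega)$. Since $|\{u=0\}|\ge\sigma_0$ and $\{u=0\}$ is convex, it contains a ball $B_{r_0}(x_0)$ with $r_0$ depending on $\sigma_0$ and $\operatorname{diam}\Omega$. Convexity of $u$ then gives the quantitative cone bound
\[
u(x)\ge c\,\mathrm{dist}(x,\{u=0\})\,|Du(x)|,
\]
and, more importantly, it lets us compare $u$ with a cone from a point at distance $\ge r_0$. Every supporting line of $u$ at a boundary point $y\in\partial\Omega$ with $u(y)>0$ intersects $\{u=0\}$ well inside, so the leaves of the foliation (segments where $u$ is affine) emanating from $\partial\Omega$ have length bounded below by some $\ell_0=\ell_0(\sigma_0,\Omega)>0$. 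In dimension two, each such leaf connects $\partial\Omega$ to the free boundary $\partial\{u=0\}$ or to the strictly convex region.

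\textbf{Step 2: $C^{1,1}$ in $\Omega_1$ near the boundary via the leaf ODE.} On $\Omega_1$ I would parametrize $u$ by leaves. Take $u(x)=a(s)+p(s)\cdot x$ on the leaf $\ell_s$. The Euler--Lagrange equation reduces to an ODE along each leaf: integrating $\Delta u-3$ against the leaf weights, together with the boundary condition $(Du-x)\cdot\nu=0$ at the endpoint on $\partial\Omega$, determines $p'(s)$ in terms of the leaf length and angle. The only way $D^2u$ blows up is if adjacent leaves converge (a fan with vanishing opening) near $\partial\Omega$. The lower bound $\ell_0$ on leaf lengths from Step 1, combined with the balance identity on the trapezoid between two nearby leaves, gives $|p'(s)|\le C$ relative to the arclength of the transversal. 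I would bound the second derivative as $|D^2u|\approx |p'|/(\text{leaf spacing rate})$. The plan is to show that the spacing rate at the boundary endpoint is comparable to that at the inner endpoint, with constants depending on $\ell_0$ and the interior Caffarelli--Lions bound at distance $\ge r_0/2$ from $\partial\Omega$.

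\textbf{Step 3: the strictly convex region near $\partial\Omega$.} Where $u>0$ is strictly convex near the boundary, $u$ solves $\Delta u=3$ with the oblique/Neumann condition $(Du-x)\cdot\nu=0$. Here I would use barriers: the obstacle constraint $u\ge$ (affine supporting functions) and the convexity of $u$ give $D^2u\ge0$, so it suffices to bound $\Delta u$ from above, and that bound is $3$. Hence $|D^2u|\le 3$ wherever the region is genuinely strictly convex. Thus the real content is Step 2 together with the transition points.

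\textbf{Main obstacle.} The hardest point is Step 2 near corners or flat parts of $\partial\Omega$, where leaves could accumulate. I would first try to prove the leaf-length lower bound in Step 1 rigorously by contradiction with mass balance. The idea is that a short leaf near the boundary would force a region of $\Omega_1$ with no access to $\{u=0\}$, violating $\int(\Delta u-3)=\int_{\partial}$flux.

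For the strictly convex case, $\{u=0\}$ has positive measure by a classical argument of Armstrong type. Here $\sigma_0$ depends only on $\Omega$, so the final claim follows.
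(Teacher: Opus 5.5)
Your proposal has a genuine gap. Step~2 carries all the difficulty, but it is only a heuristic, and it targets the wrong mechanism.

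\textbf{Errors in Step~2's inputs.} The boundary condition $(Du-x)\cdot\nu_\Omega=0$ at boundary endpoints of leaves with $u>0$ is false. Along one-ended leaves one has $R^2|\dot\xi|=2|\dot\gamma|\,(Du-x)\cdot\nu_\Omega$, which is strictly positive wherever the leaves fan out; this is exactly the tamed situation. Also, leaves of $\Omega_1$ never end on $\partial\{u=0\}\cap\Omega$: $Du$ is constant on a leaf and vanishes at interior zeros, which would force the supporting affine function to be $\equiv 0$.

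\textbf{Step~1.} Your argument for a leaf-length bound $\ell_0$ is unsupported: knowing where a supporting affine function at $y$ vanishes says nothing about the length of $[y]$. In fact no such bound is needed for one-ended leaves or for $\Omega_2$. There the paper simply invokes universal estimates (Proposition~\ref{prop:goodC11}: Caffarelli--Lions in $\Omega_2$, and \cite[Theorem~4.1]{MRZ20252} on $\Omega_1'$).

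\textbf{The real obstruction.} It is the set $\Omega_1^0$ of two-ended leaves, with both endpoints on $\partial\Omega$, and your proposal never isolates it. For these leaves there is no inner endpoint at distance $\ge r_0/2$ from $\partial\Omega$ to anchor your spacing comparison. A length lower bound is also not enough. In the example of Section~\ref{sec3}, the two-ended leaves are vertical chords of length at least $2\epsilon$, yet $D^2u$ blows up, because as $x_1\to a^+$ they become tangent to $\partial\Omega$ at their endpoints. So the hypothesis $|\{u=0\}|\ge\sigma_0$ must be used to exclude this tangency, and your proposal never says how $r_0$ enters.

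\textbf{The missing idea} is Lemma~\ref{lower-bound-length}: $\{u=0\}$ is contained in the triangle $T$ bounded by a two-ended leaf and supporting lines of $\Omega$ at its endpoints. The proof is by contradiction: if a zero of $u$ lay outside $T$, replacing $u$ on $\Omega\cap T$ by the leaf's affine function would give an admissible competitor, and the sign $(Du-x)\cdot\nu_\Omega\ge 0$ at the two endpoints shows it does not increase the energy. Since $T\cap\overline\Omega$ must then have area at least $\sigma_0$, a compactness argument gives, uniformly over all two-ended leaves $[x',x'']$ with direction $\tau$, both $|x'-x''|\ge\delta_0$ and $|\tau\cdot\nu'|,|\tau\cdot\nu''|\ge\delta_1$.

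\textbf{How the paper concludes.} The $C^{1,1}$ bound at $x_0\in\Omega_1^0$ comes from the Caffarelli--Lions affine perturbation tilted toward the nearer endpoint $x'$ (Lemma~\ref{lem:local-perturb}). The resulting section $S$ meets $\partial\Omega$ only near $x''$, and the variational identity gives $h/r^2\le C\bigl(1+\mathscr H^1(\partial S\cap\partial\Omega)/|S|\bigr)$. Since $S$ is convex and contains $x_0$, which lies at distance at least $\tfrac12\delta_0\delta_1$ from the supporting line at $x''$, one has $|S|\ge c\,\mathscr H^1(\partial S\cap\partial\Omega)$, whence $h\le Cr^2$. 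A quantitative transversality input of this kind is what your Step~2 would have to supply.
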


We then show that strict convexity is genuinely needed. More precisely, for every modulus of continuity
$\omega$, one can find a bounded smooth convex domain whose boundary contains a line segment and for which
the gradient of the minimizer fails to satisfy the modulus $\omega$.
Thus, without additional geometric assumptions, the global $C^1$ regularity in
Theorem~\ref{thm:C1} is essentially optimal.

\begin{thm}\label{C11fails}
Let $\omega:\mathbb{R}_+\to\mathbb{R}_+$ be a modulus of continuity.
There exists a bounded smooth convex domain $\Omega \subset \mathbb{R}^2$ such that the minimizer $u$ of \eqref{monopolist} satisfies
\[
\sup_{x\neq y\in\Omega}
\frac{|Du(x)-Du(y)|}{\omega(|x-y|)}
=+\infty.
\]
\end{thm}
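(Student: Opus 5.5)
Our plan is to construct the domain through a limiting argument around a degenerate model, where the gradient is no longer continuous up to the boundary. The model is a polygonal domain, or more simply a domain $\Omega_0$ containing a long flat boundary segment $\Gamma=\{x_2=0,\ |x_1|<\ell\}$ with $\Omega_0\subset\{x_2>0\}$. The first step is to analyze the minimizer near the segment. Along $\Gamma$ the natural (Neumann-type) boundary condition reads $(Du-x)\cdot\nu\ge0$, with equality where $u$ is not constrained. Near $\Gamma$, for $x_2$ small, the data $x$ makes $u$ ``ruled'' in the direction normal to $\Gamma$. We then look at the endpoints of $\Gamma$, where $\Omega$ must turn and where, for the limiting domain with corners, the Hessian is expected to blow up. Rather than computing this exactly, we will use a softer compactness argument.

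Concretely, we fix $\omega$ and argue by contradiction. Suppose that for every smooth convex domain $\Omega$ in a family $\{\Omega_k\}$ the minimizer $u_k$ satisfies $|Du_k(x)-Du_k(y)|\le M_k\,\omega(|x-y|)$ with $M_k<\infty$. We take $\Omega_k$ to be smooth convex domains that agree with a fixed polygon-like domain $\Omega_\infty$ away from $\varepsilon_k$-neighbourhoods of its corners, with $\Omega_k\to\Omega_\infty$ in Hausdorff distance. By strict convexity of $L$ and standard stability, $u_k\to u_\infty$ uniformly and $Du_k\to Du_\infty$ a.e. The goal is to pick $\Omega_\infty$ so that $Du_\infty$ fails $\omega$ at a corner in a quantitative way, and then transfer this failure to some $\Omega_k$ with $k$ large. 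Indeed, if $Du_\infty$ oscillates by at least $\delta$ on sets at distance $r$, with $\delta/\omega(r)$ large, then local $C^1$ convergence away from the corner (which follows from the $C^{1,1}_{\rm loc}$ estimate of \cite{CL2006} together with Theorem~\ref{thm:C1}-type uniform estimates near flat boundary pieces) transfers this oscillation to $u_k$. Because a single domain only has to beat one fixed $\omega$, it suffices to produce, for each $N$, a smooth domain with ratio at least $N$. A diagonal gluing then combines infinitely many scaled corner-like pieces along one boundary, accumulating at a point and each rounded at scale $\varepsilon_j$. This yields one domain with an infinite supremum, provided the pieces interact weakly. To keep the interaction weak, we will use localization of minimizers: far from the zero set, modifications of $\Omega$ affect $u$ only locally, as shown by comparison with the obstacle-type structure.

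The key step, and the main obstacle, is to show that the gradient genuinely degenerates at the rounded transition between the flat segment and the curved part, that is, at the junction of a segment and a boundary of curvature $\kappa\to\infty$ (a corner in the limit). We will attempt an explicit computation in the region where $u>0$ and $u$ is ruled (the ``bunched'' or ruled region of Rochet--Chon\'e). In this region the Euler--Lagrange equation $\Delta u=3$ (where $u$ is strictly convex), or the ODE along the segments in the ruled part, together with the boundary condition $(Du-x)\cdot\nu=0$, gives $Du$ explicitly in terms of the boundary normal. As $\nu$ rotates by a fixed angle over a length $\varepsilon$, $Du$ must change by an amount $\gtrsim$ the angle times a fixed quantity (for instance the height $u_{x_2}$ along the segment). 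This forces $|Du(x)-Du(y)|\ge c>0$ for points with $|x-y|\lesssim\varepsilon$. We must ensure that this change is not absorbed by the zero set, which we will arrange by placing the segment where $u$ is bounded below. Then $c/\omega(\varepsilon)\to\infty$ is automatic only if $\omega(\varepsilon)\to0$, which is always the case. Hence a single corner already suffices, and the gluing above may be unnecessary. The delicate part is justifying the lower bound $c$ uniformly in $\varepsilon$, which we would try to obtain by a barrier comparison in a thin boundary strip.
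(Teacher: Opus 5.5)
There is a genuine gap, and it sits exactly at your ``key step''. The statement asks for \emph{one} domain on which $\sup|Du(x)-Du(y)|/\omega(|x-y|)=+\infty$. Since $Du$ is bounded, the ratio can only blow up along pairs with $|x-y|\to0$ inside that single domain. A corner rounded at a fixed scale $\varepsilon$ gives at best $|Du(x)-Du(y)|\ge c$ with $|x-y|\sim\varepsilon$, which is the finite ratio $c/\omega(\varepsilon)$. Sending $\varepsilon\to0$ only moves you along a family of domains, so ``a single corner already suffices'' does not follow.

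Worse, a lower bound $c>0$ persisting at arbitrarily small scales in one domain would make $Du$ discontinuous. Theorem~\ref{thm:C1} rules this out on \emph{every} bounded convex domain, polygons included. So your degenerate model $\Omega_\infty$ has no gradient discontinuity, and the compactness step has nothing to transfer. In any case, convergence away from the corner cannot detect an oscillation located at the corner.

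The gluing fallback does not repair this. Convexity makes the turning angles $\alpha_j$ summable, and smoothness at the accumulation point requires the curvatures $\sim\alpha_j/\varepsilon_j$ to stay bounded. Hence the lower bounds $\gtrsim\alpha_j$ your heuristic provides are $O(\varepsilon_j)$, i.e.\ compatible with a Lipschitz gradient. There is also no localization principle available: the leaf and zero-set structure is global (cf.\ Lemma~\ref{lower-bound-length}).

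What is missing is the actual mechanism. By Theorem~\ref{C11est}, a counterexample (for $\omega(r)\gtrsim r$) must have $|\{u=0\}|=0$. By Proposition~\ref{prop:goodC11} and Remark~\ref{rem:goodC11}, $D^2u$ is universally bounded where $u$ is strictly convex or on one-ended leaves. So your $\Delta u=3$ region is irrelevant, and the failure must come from two-ended leaves.

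The paper takes a smooth domain whose boundary contains a flat side $\{x_1=a,\ |x_2|\le\epsilon\}$, continued by the graphs $x_2=\pm(\epsilon+g(x_1-a))$. Here $g$ is concave with $g(0)=0$ and $g'(0^+)=+\infty$. The boundary curve is $C^\infty$, tangent to the flat side to infinite order, yet $g$ can be chosen with $g(r)/\omega(r)\to\infty$.

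Tuning $a$ through the translation covariance $w(x+\beta e_1)-\beta x_1$ and Lemmas~\ref{pslope}, \ref{left01}, \ref{left1} gives $u=0$ and $Du=0$ on the flat side, with $u>0$ inside. The leaves near that side are then vertical, so $u=v(x_1)$. Integrating $[(v'-x_1)h]'=h$ with $h=\epsilon+g(\cdot-a)$ gives
\[
v'(a+y)-v'(a)=y+\frac{(a-v'(a))\,g(y)+\epsilon y+\int_0^y g(t)\,dt}{\epsilon+g(y)}.
\]
With $v'(a)=0$ this equals $\frac{a}{\epsilon}g(y)+O(y)$. The bad modulus is therefore inherited from the cross-sectional width of the domain, measured transversally to the flat side, and not from a fast rotation of $\nu$. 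Its coefficient is the distortion $(Du-x)\cdot\nu=a-v'(a)$ on the flat side.

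Your plan to place the segment where $u$ is bounded below would destroy exactly this term. In this ruled regime, \eqref{distortion-zero} would force $v'(a)=a$, and the same formula gives $|v'(a+y)-v'(a)|\le 2y$.
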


Beyond the regularity of $Du$, one can study the free boundary separating the strictly convex and
non-strictly convex regions of the minimizer.
For $x\in\Omega$, define the contact set of $u$ with its supporting plane at $x$ by
\[
[x]
:=
\Bigl\{
z\in\overline\Omega:
u(z)=u(x)+Du(x)\cdot(z-x)
\Bigr\}.
\]
It was shown in \cite{MRZ20252} that $\Omega$ decomposes into three disjoint sets,
\[
\Omega=\Omega_0\cup\Omega_1\cup\Omega_2,
\]
where $\Omega_0$ consists of points where $u=0$, $\Omega_1$ consists of points where $u>0$ and $[x]$
is a segment, and $\Omega_2$ consists of points where $u$ is strictly convex.
Moreover, it has been proved that, for every $x\in\Omega_1$, the segment $[x]$ has at least one endpoint on $\partial\Omega$.
Let
\[
\Omega_1'
:=
\Bigl\{
x\in\Omega_1:
[x]\ \text{has exactly one endpoint on }\partial\Omega
\Bigr\}.
\]
As shown in \cite{MRZ20252}\footnote{The argument in \cite{MRZ20252} invokes \cite{CL2001} to justify $C^1$ regularity up to the boundary. As noted above, this fact is not established in \cite{CL2001}, so the validity of their estimates relies instead on our Theorem~\ref{thm:C1}.},
\[
\bigl(Du(x)-x\bigr)\cdot \nu_{\Omega}(x)\ge 0
\]
for every $x\in\partial\Omega$ at which the outer unit normal $\nu_\Omega(x)$ is defined.
The \emph{tamed free boundary} is
\begin{equation}\label{tamefree}
\mathcal{T}
:=
\Bigl\{
x\in \partial\Omega_2\cap\partial\Omega_1\cap\Omega
:\ [x]\cap\partial\Omega=\{x_0\}
\ \text{and}\
\bigl(Du(x_0)-x_0\bigr)\cdot\nu_\Omega(x_0)>0
\Bigr\}.
\end{equation}
For every $x\in\mathcal T$, we denote by $x_0$ the unique point in $[x]\cap\partial\Omega$.
In \cite{MRZ20252}, it was shown that for sufficiently small $\delta>0$, the family of leaves
\[
\{[z]: z\in B_\delta(x_0)\cap\partial\Omega\}
\]
foliates $B_\delta(x_0)$, and these segments extend into $\Omega_2$ without intersecting one another.
Let $v$ denote the convex envelope of $u$ restricted to $B_\delta(x_0)\cap\Omega$.
Then $U:=u-v$ solves, in a neighborhood of $x$,
\[
\Delta U=f\,\chi_{\{U>0\}},
\]
where $f$ is positive and continuous \cite{MRZ20252}.
A main result of \cite{MRZ20252} is that $\mathcal{T}$ has Hausdorff dimension strictly less than $2-\varepsilon$ for some $\varepsilon\in(0,1)$ (see also Section~\ref{sec:related} below for a recent development).
Our next theorem shows that, in fact, $\mathcal{T}$ is locally a $C^1$ embedded curve: for every $x \in \mathcal{T}$ there exists $r_x>0$ such that $\mathcal{T}\cap B_{r_x}(x)$ is a $C^1$ curve.

In view of Theorem~\ref{C11est}, the next result applies whenever $\Omega$ is a strictly convex domain of class $C^{1,1}$. Moreover, by localizing the argument used in its proof, it also applies to the family of square domains $\{(a,a+1)^2\}_{a\ge0}$ considered in \cite{MRZ20252}.

\begin{thm}\label{Fbregu}
Let $\Omega \subset \mathbb{R}^2$ be a convex domain with boundary of class $C^{1,1}$, and assume that $u \in C^{1,1}(\overline\Omega)$.
Then the tamed free boundary $\mathcal{T}$ is locally a $C^1$ embedded curve.
\end{thm}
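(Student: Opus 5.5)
Near a point $x\in\mathcal T$ I will work in the local picture described before the statement. Fix $x\in\mathcal T$ with boundary endpoint $x_0$, choose $\delta$ so that the leaves $\{[z]:z\in B_\delta(x_0)\cap\partial\Omega\}$ foliate a neighborhood, and let $v$ be the convex envelope and $U=u-v$. Then $\Delta U=f\chi_{\{U>0\}}$ near $x$, with $f>0$ continuous and $U\ge0$, so this is a classical obstacle problem. The tamed free boundary near $x$ is exactly $\partial\{U>0\}$: the set $\{U=0\}$ is the union of the leaves, i.e.\ the part of $\Omega_1$ foliated from $\partial\Omega$, and $\{U>0\}$ is $\Omega_2$.

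\emph{Step 1 (regularity of the data).} Using $u\in C^{1,1}(\overline\Omega)$ and $\partial\Omega\in C^{1,1}$, I would show that the leaf map $z\mapsto[z]$ is Lipschitz in $z\in\partial\Omega$: both its direction and its length are Lipschitz. The strict inequality $(Du(x_0)-x_0)\cdot\nu_\Omega(x_0)>0$ gives the needed transversality. It follows that $v$ is $C^{1,1}$ in a neighborhood, since $v$ is affine along each leaf and the leaves vary in a Lipschitz way, and hence $U\in C^{1,1}$.

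\emph{Step 2 (blow-up classification).} At free boundary points of $\Delta U=f\chi_{\{U>0\}}$, Caffarelli's theory gives a dichotomy: blow-ups are either half-space solutions $\tfrac{f(y)}2((e\cdot z)_+)^2$ (regular points) or homogeneous quadratic polynomials (singular points). Continuity of $f$ suffices for the blow-up analysis, and the regular part is then locally a $C^1$ curve. I therefore need to rule out singular points on $\mathcal T$. The geometric input is that the coincidence set near $x$ is a union of segments $[z]$ which do not cross and which each terminate at the free boundary. The density of $\{U=0\}$ near a free boundary point $y$ is therefore bounded below: near $y$, the region $\{U=0\}$ contains a cone-like set made of the leaves ending near $y$. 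The leaf through $y$ reaches $\partial\Omega$, and nearby leaves are close to it in Lipschitz fashion, which gives a fan of segments. Hence $|\{U=0\}\cap B_r(y)|\ge c r^2$, and this rules out singular points, whose coincidence set has zero density.

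\emph{Main obstacle.} The difficulty is proving this uniform density bound. A fan of nearly parallel segments ending at $y$ could be very thin if the endpoints of neighbouring leaves are much shorter or longer. I would handle this by showing the leaf-length function $\ell(z)$ is continuous, and that the free boundary is the graph of $z\mapsto z+\ell(z)\tau(z)$ over $\partial\Omega$. The complement of $\{U>0\}$ near $y$ then contains a half-neighbourhood on the $\partial\Omega$ side, namely the region below the endpoints. In fact $\{U=0\}$ is the set of points on leaves between $\partial\Omega$ and the graph $\{z+\ell(z)\tau(z)\}$. It is a subgraph-type region in the Lipschitz coordinates $(z,t)$, $t\in[0,\ell(z)]$, and continuity of $\ell$ gives positive density when we compare with the tangent direction. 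Once every point is regular, Caffarelli's theorem gives that $\partial\{U>0\}\cap B_{r_x}(x)$ is a $C^1$ curve. Because the curve is a graph over the boundary parameter, it is embedded.
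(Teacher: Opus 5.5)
\textbf{There is a genuine gap: the step that excludes singular points does not go through.}

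Your lower density bound for $\{U=0\}$ at a free boundary point $y$ uses that the coincidence set is the subgraph $\{(z,s):0\le s\le \ell(z)\}$ in leaf coordinates, with $\ell$ continuous. But the subgraph of a continuous function can have density zero. Suppose the leaves near $[y]$ are much shorter, say $\ell(z_y)-\ell(z)\approx|z-z_y|^{1/2}$. Then $\{U=0\}$ near $y$ is a thin cusp around $[y]$, coming from the $\partial\Omega$ side. Its density at $y$ is $0$, and the blow-up of $U$ is a multiple of $(x\cdot\xi^\perp)^2$.

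This configuration is consistent with everything you use:
\begin{itemize}
\item non-crossing leaves issuing from $\partial\Omega$, fanning out and ending on the free boundary;
\item continuity of the lengths;
\item the obstacle equation, which by itself allows zero-density coincidence sets (e.g.\ $\frac12x_2^2$).
\end{itemize}

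The density bound would follow from the Lipschitz bound on the length asserted in Step~1, but you give no argument for it. The transversality $(Du(x_0)-x_0)\cdot\nu_\Omega(x_0)>0$ together with \eqref{EL r} controls $|\dot\xi|$, i.e.\ the direction of the leaves, not their length. A Lipschitz bound on the length is a free boundary regularity statement at least as strong as the theorem itself. For instance, it would rule out $\frac12$-density points whose blow-up line is parallel to the leaf. The paper does not exclude such points and treats them separately in Lemma~\ref{T-C1}.

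\textbf{The missing idea is that $0$-density points can only be excluded using the convexity of $u$,} not just the obstacle-problem structure of $U$. The paper does this through the rigidity result, Theorem~\ref{Rigi}. At a putative cusp point, with $p$ the affine function of the leaf, $(u-p)(rx)/(3r^2)$ is a convex $C^{1,1}$ function that vanishes on a slit and has Laplacian $1$ off a thin sector around the slit. Three ingredients then force it to equal $x_2^2/2$, contradicting strict convexity in $\Omega_2$:
\begin{itemize}
\item monotonicity of $Du$;
\item a harmonic lower barrier for $u_{11}\ge0$;
\item the subharmonic barrier $\varepsilon\hat w^\gamma-(C_\ast+1)x_2^2$.
\end{itemize}

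\textbf{There is a second gap in Step~2.} With $f$ merely continuous, the regular-point theory (Caffarelli, Blank) gives Reifenberg-vanishing flatness, not a $C^1$ free boundary; $C^1$ generally needs something like Dini continuity of $f$. The paper closes this gap in two ways:
\begin{itemize}
\item the iteration of Lemma~\ref{goodpoint}, which uses that $f$ depends on $R$ through \eqref{EL r}--\eqref{EL u} and so improves wherever the blow-up is transversal to the leaf;
\item a separate continuity-of-tangents argument in the tangential case (Lemma~\ref{T-C1}).
\end{itemize}
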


The proof of Theorem~\ref{Fbregu} combines an iteration scheme, yielding uniqueness of blow-ups at
$\frac12$-density points, with a local rigidity result that rules out cusp-type blow-ups.
The latter statement appears to be of independent interest, and we present it in Theorem~\ref{Rigi} below.
In what follows, we use $(r,\theta)$ to denote polar coordinates.

\begin{thm}\label{Rigi}
Let $u\in C^{1,1}(B_1)$ be a convex function satisfying
\[
u\ge 0 \quad\text{in }B_1\qquad\text{and}
\qquad
u=0 \quad\text{on } S:=\{(x_1,0): -1<x_1<0\}.
\]
Let $a\in(0,\pi/2)$, 
$D:=\{(r,\theta): -\pi+a<\theta<\pi-a\},$
and assume that $\Delta u=1$ in $D\cap B_1$. Then
\[
u(x)=\frac{x_2^2}{2}
\qquad\text{in }D\cap B_1.
\]
In particular, $u$ cannot be strictly convex in $D\cap B_1$.
\end{thm}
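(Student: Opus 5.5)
The strategy is to reduce everything to a rigidity statement for one bounded harmonic function. Since $\Delta u=1$ in $D\cap B_1$, every derivative of $u$ is harmonic there. Convexity gives $0\le u_{11}\le \Delta u=1$ in $D\cap B_1$, so $h:=u_{11}$ is a bounded nonnegative harmonic function on the slit-type domain $D\cap B_1$. The conclusion $u=x_2^2/2$ is equivalent to $h\equiv 0$ in $D\cap B_1$. Indeed, then $u_{22}=1$, and $u_1$ is harmonic with $\partial_1 u_1=0$, hence depends only on $x_2$. Since $u_{12}=\partial_2 u_1$ is also harmonic and independent of $x_1$, it is affine in $x_2$. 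Combined with $u_{11}u_{22}\ge u_{12}^2$, this forces $u_{12}\equiv0$. So $u=\tfrac12 x_2^2+\alpha x_2+\beta$, and the normalization is fixed by continuity up to $S$ (where $u=0$ and $Du=0$) through the rays $\theta=\pm(\pi-a)$, which accumulate at the origin. So the whole task is to prove $h\equiv 0$.

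\textbf{Step 1: quantitative information in the excluded sector.} Let $K:=B_1\setminus D$ be the sector of half-opening $a$ around $S$. Since $u\ge0$, $u=0$ on $S$ and $u\in C^{1,1}$, we have $Du=0$ on $S$, $|Du(x)|\le C\,\dist(x,S)$, and $u\le Cx_2^2$ near $S$. For fixed $x_2$, the map $x_1\mapsto u(x_1,x_2)$ is convex and bounded by $Cx_2^2$ on an interval of length comparable to $1$. Hence its slope, and the integral of $u_{11}$ along horizontal segments inside $K$, is $O(x_2^2)$. On the boundary rays $\theta=\pm(\pi-a)$ at radius $r$ this gives $|u_1|\lesssim r^2$, and a corresponding averaged smallness of $h$ near the rays, of order $r^{2}$ in the sense of horizontal integrals.

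\textbf{Step 2: Phragm\'en--Lindel\"of in the sector.} The domain $D$ is a sector of opening $\beta=2\pi-2a>\pi$. Its first Dirichlet eigenfunction is $r^{\pi/\beta}\cos(\pi\theta/\beta)$, with homogeneity $\pi/\beta<1$. I would apply a barrier or comparison argument, or equivalently an Almgren-type frequency monotonicity, to $u_1$ (or to $h$) in $D\cap B_\rho$, using the boundary smallness from Step 1. The aim is to show that the harmonic function decays at $0$ faster than any admissible homogeneity allows unless it vanishes identically. Concretely: $u_1$ is harmonic in $D$, is $O(r^2)$ on $\partial D$, and has $u_1(0)=0$ and $|u_1|\le Cr$. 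Subtracting the harmonic extension of the boundary data, the remainder expands in the modes $r^{k\pi/\beta}\cos(\cdot)$ / $\sin(\cdot)$. Convexity ($u_{11}\ge0$, $\det D^2u\ge0$) should then kill the low modes, in particular the mode of homogeneity $\pi/\beta<1$, which would make $h$ unbounded or of the wrong sign.

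\textbf{Step 3: propagation to all of $D\cap B_1$.} Scaling $u_\rho(x)=\rho^{-2}u(\rho x)$ preserves all hypotheses with $S$ replaced by a longer segment. I would therefore run Step 2 at every scale, or at every point of $S$ after translation, and combine it with unique continuation for the harmonic function $h$ to conclude $h\equiv0$ on the connected set $D\cap B_1$.

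\textbf{Main obstacle.} I expect the hardest part to be Step 2: turning the one-sided convexity information in $K$, where no equation is available, into boundary decay strong enough to exclude every nontrivial harmonic mode in a sector of opening larger than $\pi$. My first attempt would be to work with $u_1$ rather than $h$, since $u_1$ has pointwise control on the rays. I would then use $h=\partial_1u_1\ge0$ as a sign condition, which rules out the $\sin$-type and $\cos$-type modes with exponent below $2$ except the one producing $x_2^2/2$.
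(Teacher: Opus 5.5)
Your reduction is sound: $0\le u_{11}\le 1$ is harmonic in $D\cap B_1$, and $u_{11}\equiv0$ forces $u=\tfrac12x_2^2$, exactly as in the paper's degenerate case. (Your final formula omits a term $\gamma x_1$, but $Du(0)=0$ removes it.) The gap is in excluding $u_{11}\not\equiv0$, and it starts in Step~1, which overclaims. On a horizontal slice of $K$, the point on the ray $\theta=\pm(\pi-a)$ is an \emph{endpoint} of the interval where $0\le u\le Cx_2^2$. So convexity yields only the one-sided bound $u_1\ge -Cx_2^2$ there, since the endpoint slope dominates chord slopes. From above, $C^{1,1}$ gives only $u_1\le L|x_2|$. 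No argument confined to $K$ can do better. For instance, $u=\tfrac12x_2^2+\bigl((x_1+c|x|)^+\bigr)^2$ with $\cos a<c<1$ is convex, $C^{1,1}$, nonnegative and vanishes on $S$, yet $u_1\approx r$ on the rays. Hence the two-sided bound $|u_1|\lesssim r^2$ on $\partial D$ is not available. Neither, then, is your plan of subtracting the harmonic extension of known boundary data and expanding the remainder in Dirichlet modes.

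Beyond that, Step~2 states a hope rather than an argument, and the decisive mechanism is missing. The mode of homogeneity $\pi/\beta<1$ is excluded by the Lipschitz bound on $u_1$. What convexity contributes is not cancellation of modes but a \emph{lower bound}. A nontrivial nonnegative harmonic $u_{11}$ in a sector of opening $>\pi$ satisfies $u_{11}\ge c_0\hat w$, with $\hat w=r^b\cos(b\theta)$ and $b=\pi/(2(\pi-a'))<1$, by comparison on a slightly smaller sector. The paper integrates this horizontally from the ray, where $u_1\ge -Cx_2^2$. This shows that $u_1$ stays above the subharmonic barrier $\varepsilon\hat w^\gamma-(C+1)x_2^2$, $1<\gamma<1/b$, on the boundary of a box of size $r_0$, since the gain is of order $r_0^{1+b}\gg r_0^2$. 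The minimum principle then gives $u_1(x_1,0)\ge\varepsilon x_1^{b\gamma}$ with $b\gamma<1$, contradicting $|u_1(x_1,0)|\le Lx_1$. This uses only the one-sided boundary bound and a single scale at the vertex. Your Step~3 does not fill the hole. Other points of $S$ are interior points of $K$, where no equation holds, so translating there is not legitimate. Unique continuation for $h$ would need $h$ to vanish on an open set or to infinite order, and nothing in Steps~1--2 provides that.
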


\subsection{Motivation of the model}

The minimization problem \eqref{monopolist} is a basic instance of the Chon\'e--Rochet model,
a foundational framework in mechanism design and nonlinear pricing.
A monopolist offers products $y\in\Lambda=[0,\infty)^n$ together with a price schedule
$v:\Lambda\to\mathbb{R}_+$ and seeks to maximize profit under asymmetric information.
In the basic setting considered here, the production cost is
\[
c(y)=\tfrac12 |y|^2,
\qquad y\in\Lambda,
\]
and customer preferences are encoded by
\[
b(x,y)=x\cdot y.
\]
We also assume that the density of customer types is uniform.

Given a price schedule $v$ with $v(0)=0$,
the zero product is available as an outside option at zero cost.
Hence the associated indirect utility
\[
u(x)=\sup_{y\in\Lambda}\bigl(x\cdot y-v(y)\bigr)
\]
is automatically nonnegative, which explains the constraint $u\ge0$ in the definition of
$\mathcal K(\Omega)$.
Moreover, $u$ is convex and differentiable almost everywhere.
Whenever $u$ is differentiable at $x\in\Omega$, the maximizing product is
\[
y(x)=Du(x),
\]
and
\[
v(y(x))=x\cdot Du(x)-u(x).
\]
Therefore the monopolist's profit is
\[
\int_\Omega \bigl(v(y(x))-c(y(x))\bigr)\,dx
=
\int_\Omega
\left(
x\cdot Du(x)-u(x)-\tfrac12 |Du(x)|^2
\right)\,dx.
\]
Since
\[
x\cdot Du(x)-\tfrac12 |Du(x)|^2
=
-\tfrac12 |Du(x)-x|^2+\tfrac12 |x|^2,
\]
maximizing profit is equivalent, up to the additive constant
$\frac12\int_\Omega |x|^2\,dx$, to minimizing \eqref{monopolist}.
In \cite{MRZ20252} the authors study in detail the minimizer of \eqref{monopolist}
for the family of square domains $\{(a,a+1)^2\}_{a\ge0}$, an important example introduced by
Rochet and Chon\'e \cite{RC98}.

\subsection{A related work}\label{sec:related}
Independently and concurrently with this work, the recent preprint \cite{Mc2026} studies the regularity of the free boundary in the same model and shows that it is a continuous curve of Hausdorff dimension one, which is $C^\alpha$ for every $\alpha<1$ away from a discrete set of singular points (see in particular their Theorem~1.1). 
In contrast, our Theorem~\ref{Fbregu} shows that singular points do not occur and that the free boundary is locally a $C^1$ embedded curve.  As observed by the authors of \cite{Mc2026}, our result combines with the argument in the proof of their \cite[Proposition~4.6]{Mc2026} to imply that the tamed free boundary is smooth outside a closed nowhere dense subset.

\subsection{Structure of the paper}

The remainder of the paper is organized as follows.
In Section~\ref{sec2}, we prove Theorem~\ref{thm:C1} and Theorem~\ref{C11est}.
In Section~\ref{sec3}, we prove Theorem~\ref{thm:counterexample}, from which Theorem~\ref{C11fails} follows.
In Section~\ref{sec4}, we prove Theorem~\ref{Fbregu}: we first analyze the $\frac12$-density points via the obstacle problem formulation, and then exclude $0$-density points by establishing the rigidity result of Theorem~\ref{Rigi}.

\section{Global regularity estimates}\label{sec2}

Throughout the manuscript, we denote by $\mathscr H^{1}$ the $1$-dimensional Hausdorff measure. For a set $A$, we denote by $\operatorname{co}(A)$ the convex hull of $A$. A general constant is represented by either $C$ or $c$, and its value may differ across various estimates. All constants it relies on are enclosed in parentheses, indicated as $C(\cdot)$ and $c(\cdot)$. Typically, we use 
$C$ to denote a constant larger than $1$, and $c$ for a constant less than $1$.

In this section, we prove Theorems~\ref{thm:C1} and~\ref{C11est}.
Recall that
\[
u\in C^1(\Omega)
\qquad\text{and}\qquad
u\in C^{1,1}_{\mathrm{loc}}(\Omega)
\]
by \cite[Theorem~2.2]{CL2001} and \cite{CL2006}, respectively.
Moreover, \cite[Theorem~4.1]{MRZ20252} gives a universal $C^{1,1}$ estimate at one-ended leaves.
Thus, in dimension two, the only possible obstruction to a global $C^1$ or $C^{1,1}$ bound comes from points lying on two-ended leaves.

For every $x\in\Omega$, recall that $[x]$ denotes the contact set of
$u$ with its supporting affine function at $x$. We denote by $\Omega_1^0$ the
set of points whose contact set is a segment with two boundary endpoints, and by
\[
\Omega_1':=\Bigl\{x\in\Omega:\ u(x)>0,\ [x]\ \text{is a segment, and }
\#[x]\cap\partial\Omega=1\Bigr\}
\]
the set of points lying on one-ended leaves.

\subsection{Preliminary regularity away from two-ended leaves}

 We begin with a basic interior perturbation estimate, which will serve as the main interior ingredient in Proposition~\ref{prop:goodC11}.
%In what follows, we shall perform integrations by parts that may involve boundary terms containing the gradient of $u$ on $\partial\Omega$. This is not an issue, even though we do not yet know that $u$ is $C^1$ up to the boundary, since for a convex function on a convex domain the gradient admits a unique limit from the interior at $\mathscr H^1$-almost every point of $\partial\Omega$.
% Alternatively, since $u \in C^1(\Omega)$, the integrations by parts below can be
% justified by approximation on smooth inner domains: one performs the argument
% on $\Omega_\varepsilon:=\{x\in\Omega:\dist(x,\partial\Omega)>\varepsilon\}$ and
% then lets $\varepsilon\to 0$.

\begin{lem}\label{lem:interior-perturb}
Fix $x_0\in \Omega\setminus \operatorname{int}\Omega_0$ and $r \in (0, \dist(x_0,\partial\Omega))$.
Set
\[
p_{x_0}(x):=u(x_0)+Du(x_0)\cdot (x-x_0),
\qquad
h:=h_{x_0,r}:=\sup_{B_r(x_0)}(u-p_{x_0}),
\]
and choose $\xi=\xi_{x_0,r}\in \mathbb S^1$ so that
\[
u(x_0+r\xi)-p_{x_0}(x_0+r\xi)=h.
\]
Define the Caffarelli--Lions affine perturbation
\[
p_{x_0,r}^0(x):=p_{x_0}(x)+\frac{h}{2r}\Bigl((x-x_0)\cdot \xi+r\Bigr),
\]
and the associated section and competitor
\[
S_{x_0,r}^0:=\{x\in\Omega:\ u(x)<p_{x_0,r}^0(x)\},
\qquad
u_{x_0,r}^0:=\max\{u,p_{x_0,r}^0\}.
\]
Then
\begin{equation}\label{contain1}
S_{x_0,r}^0\subset \{x\in\Omega:\ -r<(x-x_0)\cdot \xi<r\},
\end{equation}
and, for $r$ sufficiently small,
\begin{equation}\label{CLest}
\int_{S_{x_0,r}^0} |Du_{x_0,r}^0 -Du|^2\,dx\ge c_0\frac{h^2}{r^2}|S_{x_0,r}^0|,
\end{equation}
where $c_0>0$ is universal.

Moreover, if $S_{x_0,r}^0\Subset\Omega$, then
\begin{equation}\label{interior-quadratic}
h_{x_0,r}\le C r^2,
\end{equation}
with $C>0$ universal.
\end{lem}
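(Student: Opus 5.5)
The plan has three steps: the containment \eqref{contain1} from a tangency argument at the maximum point, \eqref{CLest} from a one-dimensional slicing estimate, and \eqref{interior-quadratic} from the first variation of $L$ in the admissible direction $\varphi:=p^0_{x_0,r}-u\ge0$ on $S^0_{x_0,r}$.

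\emph{Containment \eqref{contain1}.} Write $w:=u-p_{x_0}$, so $w\ge0$, $w$ is convex, $w(x_0)=0$ and $Dw(x_0)=0$. Put $t:=(x-x_0)\cdot\xi$. The section is $\{w<\frac{h}{2r}(t+r)\}$.
\begin{itemize}
\item For $t\le -r$ the right-hand side is $\le 0\le w$, so these points are excluded.
\item For $t\ge r$: $w$ attains its maximum $h$ over $B_r(x_0)$ at $y:=x_0+r\xi$, which lies in the interior of $\Omega$ because $r<\dist(x_0,\partial\Omega)$, and $u\in C^1(\Omega)$. The convex set $\{w\le h\}$ contains $B_r(x_0)$ and has $y$ on its boundary, so its supporting line at $y$ is tangent to the ball. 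Hence $Dw(y)=\lambda\xi$ for some $\lambda\ge0$.
\item Convexity between $x_0$ and $y$ gives $0=w(x_0)\ge h-\lambda r$, so $\lambda\ge h/r$.
\item Therefore, for $t\ge r$,
\[
w(x)\ge h+\lambda(t-r)\ge h+\tfrac hr(t-r)\ge \tfrac{h}{2r}(t+r),
\]
and these points are excluded too.
\end{itemize}

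\emph{Estimate \eqref{CLest}.} On the section, $Du^0_{x_0,r}-Du=D\varphi$, and $\varphi$ is concave there. It satisfies $\varphi(x_0)=h/2$ and $0<\varphi\le\frac{h}{2r}(t+r)\le h$. I would slice $S^0_{x_0,r}$ by lines parallel to $\xi$ and work with the subset $S'$ where $\varphi\ge h/8$.
\begin{itemize}
\item By concavity of $\varphi$ (equivalently convexity of $w$), $S'$ contains a homothetic copy of $S^0_{x_0,r}$ with ratio comparable to $1$, centred near $x_0$. Hence $|S'|\ge c|S^0_{x_0,r}|$.
\item On each slice through $S'$, $\varphi$ drops from at least $h/8$ to $0$ within a $t$-interval of length at most $2r$. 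The drop reaches $0$ either at $\partial S^0_{x_0,r}$ or because $\varphi\le \frac{h}{2r}(t+r)\to0$ as $t\to-r$.
\item By Cauchy--Schwarz on each slice, $\int|\partial_\xi\varphi|^2\ge c\,\frac{h^2}{r^2}\times(\text{slice length})$. Integrating over the slices gives \eqref{CLest}.
\end{itemize}
This is the step I expect to be the main obstacle. The difficulty is that the section might touch $\partial\Omega$, and then $\varphi$ need not vanish there. I would handle this using the $t\to-r$ endpoint, where $\varphi\to0$ automatically. I would use the ``$r$ sufficiently small'' hypothesis to make the slices genuinely one-dimensional segments of length $\lesssim r$ on which the concavity comparison holds.

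\emph{Quadratic bound \eqref{interior-quadratic}.} Assume $S^0_{x_0,r}\Subset\Omega$. Then $u+\varepsilon\varphi\chi_{S^0_{x_0,r}}$ is admissible for $\varepsilon\in(0,1]$: it is the maximum of $u$ with a convex combination of $u$ and $p^0_{x_0,r}$, so it is convex and $\ge u\ge0$. Minimality of $u$ gives the first-variation inequality
\[
\int_{S}\bigl((Du-x)\cdot D\varphi+\varphi\bigr)\,dx\ge0,\qquad S:=S^0_{x_0,r}.
\]
Since $\varphi=0$ on $\partial S$ and $Du=Dp^0_{x_0,r}-D\varphi$ with $Dp^0_{x_0,r}$ constant, integration by parts yields
\[
\int_S Du\cdot D\varphi=-\int_S|D\varphi|^2,\qquad -\int_S x\cdot D\varphi=2\int_S\varphi.
\]
Hence $\int_S|D\varphi|^2\le 3\int_S\varphi\le 3h|S|$. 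Combining with \eqref{CLest} gives $c_0\frac{h^2}{r^2}|S|\le 3h|S|$, so $h\le Cr^2$ with $C$ universal.
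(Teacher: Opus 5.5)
Your proofs of \eqref{contain1} and \eqref{interior-quadratic} are correct. The paper does not prove \eqref{contain1} or \eqref{CLest}: it quotes both from \cite[Eqs.~(32), (41)]{MRZ20252} and proves only the last claim. Your supporting-line argument at the maximum point $x_0+r\xi$, which gives $Dw(x_0+r\xi)=\lambda\xi$ with $\lambda\ge h/r$, is therefore a self-contained replacement for the citation of \eqref{contain1}. Your first-variation proof of \eqref{interior-quadratic} is the linearized version of the paper's argument. The paper inserts the full competitor $\max\{u,p^0_{x_0,r}\}$ into \eqref{var-id} and gets $\tfrac12\int_S|D\varphi|^2\le 3\int_S\varphi$. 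You get $\int_S|D\varphi|^2\le 3\int_S\varphi$. Both then close with \eqref{CLest} and $\varphi\le h$.

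The gap is in \eqref{CLest} when $S:=S^0_{x_0,r}$ reaches $\partial\Omega$. Your Cauchy--Schwarz step needs every $\xi$-slice through $S'$ to end at a zero of $\varphi$. A slice of $S$ ends either on $\partial S\cap\Omega$, where $\varphi=0$, or on $\partial\Omega$, where $\varphi$ need not vanish. Your fallback "$\varphi\le\frac{h}{2r}(t+r)\to0$ as $t\to-r$" adds nothing new: a slice that stays in $\Omega$ down to $t=-r$ already leaves $S$ at a point of $\partial S\cap\Omega$. The bad slices are those cut off by $\partial\Omega$ at both ends. On such a slice $\varphi$ may stay $\ge h/8$ throughout, and you get no lower bound on $\int|\partial_\xi\varphi|^2$.

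Taking $r$ small does not remove this. \eqref{contain1} localizes $S$ only in the $\xi$-direction, and the slice-length bound you invoke the hypothesis for already follows from \eqref{contain1} and convexity of $S$. Concretely, if $x_0\in\Omega_1^0$, then $w=0$ and $\varphi=\frac{h}{2r}(t+r)>0$ on the portion of $[x_0]$ where $t\ge0$. That portion joins $x_0$ to a boundary endpoint of the leaf, so $\overline S$ meets $\partial\Omega$ for every $r$.

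So your argument proves \eqref{CLest} only when $S\Subset\Omega$. That is enough for \eqref{interior-quadratic}, but not for the lemma as stated. The boundary-touching case is exactly the one used in Lemma~\ref{lem:local-perturb} to derive \eqref{key-ineq0}. You need either a separate argument for slices truncated by $\partial\Omega$ at both ends, or to quote \cite[Eq.~(41)]{MRZ20252} as the paper does.
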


\begin{proof}
The strip localization \eqref{contain1} is \cite[Eq.~(32)]{MRZ20252}
(see also \cite{CL2006}), and \eqref{CLest} is
\cite[Eq.~(41)]{MRZ20252}.
We now record the variational identity in the form which will be used repeatedly.
Let $p$ be any affine function and set
\[
S:=\{u<p\},
\qquad
u_p:=\max\{u,p\}.
\]
Writing
\[
w:=u_p-u=(p-u)_+,
\]
we have $Dw=Dp-Du$ a.e.\ on $S$ and $w\equiv 0$ on $\partial S\cap\Omega$.
A direct computation gives
\begin{align}
L[u_p;\Omega]-L[u;\Omega]
&=\int_S\Bigl[\frac12\bigl(|Dp|^2-|Du|^2\bigr)-x\cdot(Dp-Du)+w\Bigr]\,dx \nonumber\\
&=-\frac12\int_S|Dw|^2\,dx+\int_S (Dp-x)\cdot Dw\,dx+\int_S w\,dx \nonumber\\
&=-\frac12\int_S|Dw|^2\,dx
+\int_{\partial S\cap\partial\Omega} w\bigl[(Dp-x)\cdot\nu_\Omega\bigr]\,d\mathscr H^1
+3\int_S w\,dx. \label{var-id}
\end{align}
Indeed, since $Dp$ is constant, ${\rm div}(Dp-x)=-2$, and integration by parts gives
\[
\int_S (Dp-x)\cdot Dw
=
\int_{\partial S\cap\partial\Omega} w\bigl[(Dp-x)\cdot\nu_\Omega\bigr]\,d\mathscr H^1
+2\int_S w\,dx.
\]
We apply \eqref{var-id} with $p=p_{x_0,r}^0$. Since $u$ is minimal,
\[
0\le L[u_{x_0,r}^0;\Omega]-L[u;\Omega].
\]
If $S_{x_0,r}^0\Subset\Omega$, the boundary term in \eqref{var-id} vanishes.
Moreover, by \eqref{contain1},
\[
0<u_{x_0,r}^0-u=p_{x_0,r}^0-u\le p_{x_0,r}^0-p_{x_0}\le h
\qquad\text{on }S_{x_0,r}^0,
\]
so that
\[
\int_{S_{x_0,r}^0}(u_{x_0,r}^0-u)\,dx\le h\,|S_{x_0,r}^0|.
\]
Combining this estimate with \eqref{CLest} and \eqref{var-id}, we obtain
\[
0\le
-\frac12c_0\frac{h^2}{r^2}|S_{x_0,r}^0|
+3h\,|S_{x_0,r}^0|,
\]
which is equivalent to \eqref{interior-quadratic} with $C=\frac{6}{c_0}$.
\end{proof}

We can now prove a uniform $C^{1,1}$ regularity away from $\Omega_1^0$.

\begin{prop}\label{prop:goodC11}
Let $U$ be a connected component of $\Omega\setminus \Omega_1^0$.
Then there exists a universal constant $C_0>0$ such that
\[
0\le D^2u\le C_0\,\mathrm{Id}
\qquad\text{a.e.\ on }U.
\]
In particular, $Du$ is $C_0$-Lipschitz on $U$ and admits a unique $C_0$-Lipschitz extension to $\overline U$.

\end{prop}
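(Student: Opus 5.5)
We aim for a pointwise quadratic upper bound: for every $x_0\in U\setminus\operatorname{int}\Omega_0$ and every small $r$,
\[
\sup_{B_r(x_0)\cap\Omega}(u-p_{x_0})\le C r^2 .
\]
Given this, the conclusion follows by standard arguments. On $\operatorname{int}\Omega_0$ we have $D^2u=0$ trivially. A uniform quadratic upper bound by supporting planes at every point of $U\setminus \operatorname{int}\Omega_0$ gives $D^2u\le C_0\,\mathrm{Id}$ a.e. on $U$; this uses Alexandrov second derivatives, combined with convexity for the lower bound $D^2u\ge0$. Since $U$ need not be convex, the Lipschitz bound on $Du$ will be derived from $|D^2u|\le C_0$ along paths inside $U$. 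The extension to $\overline U$ then follows from uniform continuity.

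We split the points of $U\setminus\operatorname{int}\Omega_0$ into three kinds.

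\textbf{Points with $S^0_{x_0,r}\Subset\Omega$.} Here we invoke \eqref{interior-quadratic} of Lemma~\ref{lem:interior-perturb} directly and get $h_{x_0,r}\le Cr^2$.

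\textbf{Points on one-ended leaves, $x_0\in\Omega_1'$.} Here we use the universal $C^{1,1}$ estimate of \cite[Theorem~4.1]{MRZ20252}.

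\textbf{Points of $\Omega_2$ or of $\partial\Omega_0$ whose section reaches $\partial\Omega$.} This is the main obstacle. We would use the identity \eqref{var-id} with $p=p^0_{x_0,r}$ and now keep the boundary term
\[
\int_{\partial S\cap\partial\Omega} w\,(Dp-x)\cdot\nu_\Omega .
\]
If this term is nonpositive, or is bounded by $Ch\,|S|$, the argument of Lemma~\ref{lem:interior-perturb} goes through unchanged. Near such boundary points we have $Dp\approx Du$, and we would like the sign information $(Du-x)\cdot\nu_\Omega\ge0$. However, this has the wrong sign for us. So the plan is instead to use a competitor that lowers $u$: replace $u$ by $\min\{u,\tilde p\}$, or cut $u$ down toward the tangent plane. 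This competitor lives near $\partial\Omega$, where the boundary flux $(Du-x)\cdot\nu_\Omega$ enters with the favorable sign. We would balance it using the strip geometry \eqref{contain1}, which confines the section to a width-$2r$ strip.

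The key point to exploit, in all three cases, is that $x_0\notin\Omega_1^0$. If the section $S^0_{x_0,r}$ touched $\partial\Omega$ on both sides while remaining thin, then, as $r\to0$, the contact sets would converge to a segment with two boundary endpoints. Such a segment is a two-ended leaf, which is excluded in the component $U$. We would quantify this by compactness, or alternatively by noting that a section which touches the boundary has only one boundary side in the one-ended or strictly convex regime. This gives control of the boundary contribution by $Ch|S|$, and we conclude $h\le Cr^2$ as in Lemma~\ref{lem:interior-perturb}.

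The delicate step is making this boundary estimate universal, with constants independent of $x_0$ and $\Omega$. If direct control fails, we would fall back on the following: apply Lemma~\ref{lem:interior-perturb} only at interior scales, use \cite[Theorem~4.1]{MRZ20252} on one-ended leaves, and propagate the resulting bound along leaves using the affinity of $u$ on each contact segment.
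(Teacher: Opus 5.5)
Your first two cases use the same ingredients as the paper, but your third case, the ``main obstacle'', is an unexecuted plan. The one concrete competitor you name, $\min\{u,\tilde p\}$, is in general not convex, so it is not in $\mathcal K(\Omega)$. The missing observation is that this case never needs to be handled. The claim is an a.e.\ bound on $D^2u$, so you only need $h_{x_0,r}\le Cr^2$ as $r\to0$ at a.e.\ $x_0\in U$, with $C$ universal but the range of $r$ allowed to depend on $x_0$.
\begin{itemize}
\item Points of $\partial\Omega_0\cap\Omega$ form a Lebesgue-null set because $\Omega_0$ is convex, so they can be discarded.
\item For $x_0\in\Omega_2$ you have $[x_0]=\{x_0\}$, and \eqref{contain1} gives $S^0_{x_0,r}\subset\{u<p_{x_0}+h_{x_0,r}\}$. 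Since $h_{x_0,r}\to0$ and the compact sets $\{z\in\overline\Omega:\ u(z)-p_{x_0}(z)\le t\}$ decrease to $[x_0]$ as $t\downarrow0$, the section is compactly contained in $\Omega$ for all small $r$. Then \eqref{interior-quadratic} gives the universal bound $6/c_0$.
\item Since $U\subset\Omega_0\cup\Omega_1'\cup\Omega_2$, this, together with $\operatorname{int}\Omega_0$ and \cite[Theorem~4.1]{MRZ20252} on $\Omega_1'$, already covers a.e.\ point of $U$.
\end{itemize}
This is the paper's argument. Your ``fallback'' points in this direction but never explains why boundary-touching sections are irrelevant.

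The second gap is in passing from the Hessian bound to the Lipschitz bound. You assert that $U$ need not be convex and integrate $D^2u$ along paths in $U$. That only yields $|Du(x)-Du(y)|\le C_0\,d_U(x,y)$, with $d_U$ the intrinsic path distance in $U$. This is not the Euclidean $C_0$-Lipschitz bound in the statement, and without comparing $d_U$ to $|x-y|$ it does not give uniform continuity up to $\overline U$ either.

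In fact, in the plane every component is convex, and this is the key geometric point of the paper's proof. Each two-ended leaf $I$ is a chord of $\Omega$, so $\Omega\setminus I$ splits into two convex pieces, and the connected set $U$ lies in one of them, say $H_I$. Moreover $U=\bigcap_I H_I$: a point of this convex intersection lies on no leaf, and the segment joining it to any point of $U$ stays in $\bigcap_I H_I$, hence avoids $\Omega_1^0$. With $U$ convex, restricting $u$ to segments $[x,y]\subset U$ gives $|Du(x)-Du(y)|\le C_0|x-y|$, and hence the unique $C_0$-Lipschitz extension to $\overline U$.
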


\begin{proof}
In dimension two, every connected component of
$\Omega\setminus\Omega_1^0$ is convex. Indeed, the leaves of $\Omega_1^0$ are
pairwise disjoint (in $\Omega$) segments; each such segment lies on a line which separates
$\Omega$ into two convex pieces, and a connected component $U$ is obtained by
choosing one side of each leaf and intersecting the corresponding half-planes
with $\Omega$.

We claim that there exists a universal constant $C_0>0$ such that
\[
0\le D^2u\le C_0\,\mathrm{Id}
\qquad\text{a.e.\ on }U.
\]
Indeed, once this is proved, for every $x,y\in U$ the segment $[x,y]$ is
contained in $U$, and the one-dimensional restriction of $u$ to $[x,y]$
yields
\[
|Du(x)-Du(y)|\le C_0|x-y|.
\]
Hence $Du$ is $C_0$-Lipschitz on $U$ and extends uniquely to a Lipschitz map on
$\overline U$.

Now, up to the null set $\partial\Omega_0\cap\Omega$, every point of $U$
belongs to one of the sets
\[
\operatorname{int}\Omega_0,\qquad \Omega_2,\qquad \Omega_1'.
\]
On $\operatorname{int}\Omega_0$ we have $u\equiv 0$, so $D^2u=0$. On $\Omega_2$,
Lemma~\ref{lem:interior-perturb} yields a universal local quadratic estimate,
hence a universal local $C^{1,1}$ bound. On $\Omega_1'$, the one-endpoint
boundary estimate of \cite[Theorem~4.1]{MRZ20252} yields the same universal
local $C^{1,1}$ bound. Since $\Omega_0$ is convex, $\partial\Omega_0$ has zero
Lebesgue measure. Therefore
\[
0\le D^2u\le C_0\,\mathrm{Id}
\qquad\text{a.e.\ on }U,
\]
as claimed.
This proves the proposition.
\end{proof}

\begin{rem}\label{rem:goodC11}
Applying Proposition~\ref{prop:goodC11} in each connected component, we obtain the estimate
\begin{equation}\label{c11good-rmk}
\|D^2u\|_{L^\infty(\Omega_1'\cup\Omega_2)}\le C_0
\end{equation}
for a universal constant $C_0>0$.
\end{rem}

\subsection{Global $C^1$ regularity}

We next prove the geometric lemma for two-ended leaves.

\begin{lem}\label{lower-bound-length}
Let $\Omega\subset\mathbb{R}^2$ be a bounded convex domain, and let $u$ be a minimizer of \eqref{monopolist}. Suppose that there exists a point $x_0\in\Omega$ such that $[x_0]$ is a segment $I\subset\Omega$ whose endpoints $x',x''$ lie on $\partial\Omega$. Let $P'$ and $P''$ be supporting lines to $\Omega$ at $x'$ and $x''$, and let $T$ be the (possibly degenerate) triangular region enclosed by $I$ and the two lines $P',P''$. Then
\[
\{u=0\}\cap\overline{\Omega}\subset T\cap\overline{\Omega}.
\]
\end{lem}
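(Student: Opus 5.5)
The plan is to reduce the statement to a \emph{side} statement and then prove that by a one-sided variational argument localized to one side of the leaf. Let $\ell$ be the line containing $I$, and split $\Omega\setminus\ell$ into the two convex pieces $\Omega_A$ and $\Omega_B$.

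\emph{Step 1: the zero set lies on one side of $\ell$.} Let $p$ be the supporting affine function of $u$ along $I$, so $u\ge p$ in $\Omega$ with equality on $I$. I will check that $u>0$ on $I$. The leaf $[x_0]$ is a nondegenerate segment, while in the decomposition recalled in the introduction the set $\Omega_0$ consists of points with $u=0$, so $x_0\in\Omega_1$ and $u(x_0)>0$. Since $u=p$ on $I$ and $p$ is affine, $u$ cannot vanish on the relative interior of $I$. Hence $\{u=0\}$ is a convex set disjoint from $I$ (up to a harmless check at the endpoints $x',x''$), and it lies in $\overline{\Omega_A}$ for one of the two sides, say $A$. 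Moreover $p\le u=0$ on $\{u=0\}$ and $p>0$ on $I$. So the zero line of $p$ separates $I$ from $\{u=0\}$, and $Dp$ points from $\Omega_A$ towards $\Omega_B$.

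\emph{Step 2: reduce to the converging side.} Since $\Omega$ already lies between $P'$ and $P''$, the set $T\cap\overline\Omega$ is exactly $\overline\Omega$ intersected with the closed half-plane of $\ell$ on the side where $P'$ and $P''$ converge. In the degenerate case, when $P'\parallel P''$ or the lines meet on $\ell$, it is $I$ itself. So the lemma is equivalent to two claims: $\{u=0\}=\emptyset$ in the degenerate case, and otherwise $A$ is the side on which $P'$ and $P''$ meet. I will argue by contradiction and assume $P'$ and $P''$ do not meet on side $A$. Then $\Omega_B$ is contained in the bounded triangle cut out by $\ell,P',P''$ on the side $B$, and $u>0$ on $\overline{\Omega_B}$.

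\emph{Step 3: a variational balance on $\Omega_B$ (main obstacle).} Let $\lambda$ be an affine function vanishing on $\ell$ and positive on $\Omega_B$. Three ingredients are available.
\begin{itemize}
\item The competitor $u+\varepsilon\lambda_+$ is convex and nonnegative. Integrating by parts as in \eqref{var-id}, it gives
\[
\int_{\Omega_B}\lambda\,(3-\Delta u)\,dx+\int_{\partial\Omega\cap\partial\Omega_B}\lambda\,(Du-x)\cdot\nu_\Omega\,d\mathscr H^1\ \ge 0 .
\]
\item The boundary inequality $(Du-x)\cdot\nu_\Omega\ge0$ recalled in the introduction, which is valid by Theorem~\ref{thm:C1}.
\item The fact that $\Omega_B$ contains no zero set. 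This allows genuinely two-sided perturbations that \emph{lower} $u$ there.
\end{itemize}
The hardest point is building such a lowering competitor while keeping convexity across $I$. I would try
\[
w_\varepsilon:=\max\{u-\varepsilon\lambda_+,\;p\},
\]
or a Caffarelli--Lions type tilt $\max\{u,\,p+\varepsilon\lambda\}$ rotated about an endpoint. I would then expand $L[w_\varepsilon]-L[u]$ via \eqref{var-id} and use the $C^{1,1}$ control of Remark~\ref{rem:goodC11} near the leaf to show that the thin-layer terms near $I$ are of lower order. The expected outcome is a reverse inequality, that is, an exact balance of $\lambda$ (and of $1$) against the measure $(3-\Delta u)\,dx+(Du-x)\cdot\nu_\Omega\,d\mathscr H^1$ on $\Omega_B$.

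\emph{Step 4: geometric contradiction.} Since $Du=Dp$ on $I$, I would plug the balance from Step 3 into the divergence theorem for the field $Du-x$ on $\Omega_B$, whose divergence is $\Delta u-2$. The flux through $I$ is $(Dp-x)\cdot\nu_I$, and the boundary values at $x',x''$ satisfy $(Dp-x')\cdot\nu'\ge0$ and $(Dp-x'')\cdot\nu''\ge0$. Combined with the direction of $Dp$ from Step 1, these constraints should force the normals $\nu',\nu''$ to tilt towards side $A$, i.e.\ $P'$ and $P''$ to converge on side $A$. This contradicts the assumption of Step 2, and in the parallel case it forces $\{u=0\}=\emptyset$. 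Step 3 is where I expect the real work; Steps 1, 2 and 4 are essentially bookkeeping once the balance identity is in hand.
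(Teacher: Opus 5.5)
Your Steps~1--2 are a sound reduction and match the paper's setting. Under the contradiction hypothesis, $u>0$ on the triangle side $\Omega_B=\Omega\cap T$. The supporting affine function $p$ is nonnegative there: if $p(x)<0$ for some $x\in\Omega_B$, the segment from $x$ to a zero of $u$ strictly on side $A$ crosses $I$, where $p=u\ge0$.

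The gap is in Steps~3--4, and it is not only that they are unfinished: the perturbative route cannot see the geometry you need.
\begin{itemize}
\item Both $u+\varepsilon\lambda_+$ and $\max\{u-\varepsilon\lambda_+,p\}$ are admissible as soon as $\Omega_B$ is free of zeros, \emph{whatever} the position of $P'$ and $P''$. So any balance identity they produce also holds in the legitimate configuration (zero set on the converging side, zero-free side diverging). By itself it cannot give a contradiction.
\item All the geometric information would then have to come from Step~4. There you only use the two endpoint inequalities and the explicit flux through $I$. Your identity, however, also involves the unknown measure $\Delta u$ on $\Omega_B$ and the unknown values of $(Du-x)\cdot\nu_\Omega$ along the whole arc $\partial\Omega\cap\partial\Omega_B$. ``Should force the normals to tilt'' is not an argument.
\item The ``balance of $1$'' would need an admissible upward perturbation that is $\approx\varepsilon$ on $\Omega_B$ and $0$ on $\Omega_A$, and you do not provide one.
\item Invoking Theorem~\ref{thm:C1} for the boundary inequality along the arc is circular. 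That theorem is proved through Lemmas~\ref{lem:leaf-lower-bound}, \ref{lem:strip-C11} and \ref{lem:leaf-slope-limit}, which rely on the present lemma.
\item Remark~\ref{rem:goodC11} gives no $C^{1,1}$ control near the two-ended leaf $I$.
\end{itemize}

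The missing idea is to push your $w_\varepsilon$ to $\varepsilon=\infty$, i.e.\ to flatten completely: set $\bar u:=p$ on $\Omega_B$ and $\bar u:=u$ elsewhere. This is convex, because $p$ supports $u$ and coincides with it on $I$. It is nonnegative by the remark above, $\bar u\le u$, and $\bar u\not\equiv u$ since $[x_0]=I$. Expand $L$ around $\bar u$ rather than $u$, dropping $-\frac12\int_{\Omega_B}|D(\bar u-u)|^2$, and integrate by parts using the \emph{constant} gradient $Dp$. This gives
\[
L[\bar u]-L[u]\le\int_{\partial\Omega\cap\partial\Omega_B}(p-u)\,(Dp-x)\cdot\nu_\Omega\,d\mathscr H^1+3\int_{\Omega_B}(p-u)\,dx .
\]
Now the endpoint conditions $(Dp-x')\cdot\nu'\ge0$ and $(Dp-x'')\cdot\nu''\ge0$ suffice. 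They give $(Dp-x)\cdot\nu'\ge0$ and $(Dp-x)\cdot\nu''\ge0$ for every $x\in\overline\Omega$. Since $\Omega_B$ is the side on which $P'$ and $P''$ converge, every outer normal along the arc $\partial\Omega\cap\partial\Omega_B$ is a nonnegative combination of $\nu'$ and $\nu''$. Hence $(Dp-x)\cdot\nu_\Omega\ge0$ on the arc, so $L[\bar u]<L[u]$, contradicting minimality.

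The position of the triangle enters exactly through this sign. It concerns the constant vector $Dp$, not $Du$ on the arc, and small perturbations of $u$ never reach it.
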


\begin{proof}
Arguing by contradiction, suppose the conclusion fails. Let
$S:=\Omega\cap T,$
so that $S$ is convex and $I\subset \partial S\cap\Omega$. Let
\[
\ell(x):=u(x_0)+Du(x_0)\cdot (x-x_0),
\]
and define
\[
\bar u(x):=
\begin{cases}
\ell(x), & x\in S,\\[1mm]
u(x), & x\in\Omega\setminus S.
\end{cases}
\]

We first record two basic facts.

\smallskip
\noindent\emph{(i) $\bar u$ is convex.}
This follows easily since $\ell$ is a supporting affine function for $u$, and $\ell=u$ on the whole segment $I=[x_0]$.

\smallskip
\noindent\emph{(ii) $\bar u\ge0$ on $\Omega$.}
By the contradiction assumption there exists
\[
y\in \{u=0\}\cap(\overline\Omega\setminus T).
\]
Suppose there exists $x\in S$ with $\ell(x)<0$. Since
$x\in T$ and $y\notin T$, the segment $[x,y]$ has to intersect $I$ at some point 
$z\in I\cap[x,y]$. Since $\ell$ is affine and $\ell(y)\le u(y)=0$, this implies $\ell(z)<0$. On the other hand, $z\in I=[x_0]$, hence $\ell(z)=u(z)\ge0$, a contradiction. This proves that
$\ell\ge0$ on $S$, and consequently $\bar u\ge0$ on all of $\Omega$.

\smallskip
Hence, we have proved that $\bar u\in\mathcal K(\Omega)$. Also, we note that $\bar u\not\equiv u$ (otherwise $u=\ell$ on $S$, so the contact set $[x_0]$ would contain the
two-dimensional set $S$, contradicting the assumption that $[x_0]=I$ is a
segment). Thus, since $u$ is the unique minimizer, we have
\begin{align}
0< L[\bar u;\Omega]-L[u;\Omega]
&= \int_{S}\left[\frac12\bigl(|D\bar u|^2-|Du|^2\bigr)-x\cdot(D\bar u-Du)+(\bar u-u)\right]\,dx \nonumber\\
&\le \int_{S}\frac12\bigl(|D\bar u|^2-|Du|^2\bigr)\,dx - \int_{S}x\cdot(D\bar u-Du)\,dx =: I_1+I_2, \label{bar-u-vari}
\end{align}
where we used $\bar u-u\le0$ on $S$.

We first bound
\begin{align*}
I_1
&=\frac12\int_S \bigl(|D\bar u|^2-|Du|^2\bigr)\,dx\\
&=-\frac12\int_S |D\bar u-Du|^2\,dx+\int_S D\bar u\cdot(D\bar u-Du)\,dx\\
&\le \int_S D\bar u\cdot D(\bar u-u)\,dx.
\end{align*}
Observe now that $\bar u$ is affine on $S$, with
\[
D\bar u\equiv Du(x_0)\qquad\text{on }S.
\]
Integrating by parts and using that $\bar u-u=0$ on
$\partial S\cap\Omega=I$, while $\nu_S=\nu_\Omega$ on
$\partial S\cap\partial\Omega$, we obtain
\[
I_1
\le \int_{\partial S\cap \partial\Omega}(\bar u-u)\,D\bar u\cdot\nu_\Omega\,
d\mathscr H^1.
\]
Similarly,
\begin{align*}
I_2
&=-\int_S x\cdot(D\bar u-Du)\,dx
= -\int_S x\cdot D(\bar u-u)\,dx\\
&=-\int_{\partial S\cap\partial\Omega}(\bar u-u)\,(x\cdot\nu_\Omega)\,
d\mathscr H^1
+2\int_S (\bar u-u)\,dx\\
&\le -\int_{\partial S\cap\partial\Omega}(\bar u-u)\,(x\cdot\nu_\Omega)\,
d\mathscr H^1,
\end{align*}
therefore
\begin{equation}\label{I1+I2}
I_1+I_2\le \int_{\partial S\cap\partial\Omega}(\bar u-u)\,
\bigl[(D\bar u-x)\cdot \nu_\Omega\bigr]\,d\mathscr H^1.
\end{equation}
We now determine the sign of the term
\[
(D\bar u-x)\cdot\nu_\Omega.
\]
Let $\nu'$ and $\nu''$ be outward unit normals to the supporting lines $P'$ and $P''$. By \cite[Proposition~2.3]{MRZ20252}\footnote{The argument in \cite{MRZ20252} is stated under a global $C^1$ regularity assumption. However, their proof applies at all points where $\partial\Omega$ and $u$ are differentiable, which suffices for our purposes.},
\[
(Du(x_0)-x')\cdot\nu'\ge0,
\qquad
(Du(x_0)-x'')\cdot\nu''\ge0.
\]
Since $D\bar u\equiv Du(x_0)$ on $S$, this means exactly that $D\bar u$ belongs
to the closed wedge determined by the two outward supporting half-planes
bounded by $P'$ and $P''$, hence
\[
(D\bar u-x)\cdot \nu_\Omega(x)\ge0
\qquad\text{for }\mathscr H^1\text{-a.e.\ }x\in \partial S\cap \partial\Omega.
\]
Plugging this into \eqref{I1+I2} and recalling that $\bar u-u\le0$ we obtain
\[
I_1+I_2\le0,
\]
which contradicts \eqref{bar-u-vari}. This contradiction proves the lemma.
\end{proof}

Thanks to the previous lemma, we obtain a uniform lower bound on two-ended leaves.

\begin{lem}\label{lem:leaf-lower-bound}
There exists $\delta_0=\delta_0(\Omega)>0$ such that every leaf
$I\subset\Omega_1^0$ satisfies
\[
\diam(I)\ge \delta_0.
\]
\end{lem}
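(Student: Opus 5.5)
The plan is to combine Lemma~\ref{lower-bound-length} with a compactness argument and a lower bound on the size of the zero set. First, observe that $\{u=0\}$ is nonempty and in fact contains a set of definite size depending only on $\Omega$. Indeed, the minimizer satisfies $u\le C(\Omega)$ and $|Du|\le C(\Omega)$, for instance by comparison with $0$ and convexity together with the known bounds. Moreover, it is classical (Armstrong/Rochet--Chon\'e) that the zero set has nonempty interior. What we really need is a weaker statement: there is a point $y_*\in\{u=0\}$ whose distance to $\partial\Omega$ is bounded below by some $\eta(\Omega)>0$, or at least that $\{u=0\}$ is not contained in arbitrarily thin neighbourhoods of $\partial\Omega$. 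I would first try to prove this by a variational comparison. If $\{u=0\}\subset\{\dist(\cdot,\partial\Omega)<\eta\}$, then $u>0$ on the inner region. Testing the Euler--Lagrange inequality with $u-\varepsilon$ (truncated at $0$, which is admissible since $(u-\varepsilon)_+$ is convex) gives a contradiction for small $\eta$, because the first variation $\int(-\Delta u+3)$-type mass must be balanced by the boundary flux. If this is delicate, I would instead use the fact that $u$ is independent of the leaf and fixed by $\Omega$, so the existence of one point $y_*\in\{u=0\}$, with $\{u=0\}$ being a convex set of positive measure, suffices qualitatively.

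Second, take a two-ended leaf $I=[x',x'']$ with short length $d$. By Lemma~\ref{lower-bound-length}, $\{u=0\}\subset T$, the triangle bounded by $I$ and the supporting lines $P',P''$. I will show that $T\cap\overline\Omega$ has small diameter when $d$ is small, uniformly in the position of $I$. The region $T\cap\overline\Omega$ is the cap of $\Omega$ cut off by the chord $I$ on the side where the supporting lines meet (or where the cap lies inside the wedge). For a bounded convex domain, the cap cut off by a chord of length $d$ has area tending to $0$ as $d\to0$, uniformly. Indeed, if caps with chord length $d_k\to0$ had area bounded below, a limit by Blaschke selection would give a cap of positive area cut by a degenerate chord. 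That is impossible, since a chord of length $0$ at a boundary point cuts off nothing; the only alternative is the full domain, which is excluded because the cap lies on one side of $I$ and inside $T$. I need to check this with some care. The cap containing $\{u=0\}$ is the one on the side of $I$ bounded by $P',P''$; if that were the "big" side, then $T$ would have to be unbounded or huge, but $T$ is a triangle with base $I$. Its angles at $x',x''$ are those between $I$ and the supporting lines, and these must be less than $\pi$ in total for the triangle to exist. When $d\to0$ the big side cannot be enclosed in such a triangle unless $\Omega$ degenerates. So $|T\cap\Omega|\to0$.

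Third, conclude. $\{u=0\}$ is convex with $|\{u=0\}|\ge\sigma(\Omega)>0$ (if available from step one), which contradicts $|T\cap\Omega|<\sigma$ for $d<\delta_0$. Alternatively, fixing the point $y_*$ at positive distance from $\partial\Omega$ contradicts $T\cap\overline\Omega$ lying in a small neighbourhood of the boundary. I expect the main obstacle to be step one: a lower bound for the size of $\{u=0\}$ (or its distance from $\partial\Omega$) depending only on $\Omega$. Since $\Omega$ is fixed and $u$ is unique, a purely qualitative statement (the zero set is a convex set with nonempty interior, known from \cite{RC98,MRZ20252}) is enough, and $\delta_0$ may depend on $u$, hence on $\Omega$.
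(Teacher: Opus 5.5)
\textbf{There is a genuine gap in your first step.} Your overall strategy is the right one: trap $\{u=0\}$ inside the triangle $T$ of Lemma~\ref{lower-bound-length} and show that $T\cap\overline\Omega$ becomes small as the chord shrinks. The problem is the input you want from step one. You need that the zero set has nonempty interior, or at least contains a point $y_*$ at positive distance from $\partial\Omega$. Neither holds for general bounded convex domains.

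The paper's own construction in Section~\ref{sec3} (Theorem~\ref{thm:counterexample}) gives smooth convex domains $\Omega_{\epsilon,a}$ on which $u>0$ in the interior. There $\{u=0\}$ lies in the flat boundary segment $\{x_1=a\}$, so it has zero area and no point away from $\partial\Omega$. This is consistent with the truncation you propose: any Armstrong-type truncation only yields an inequality of the form $|\Omega|\le C(\Omega)\,\mathscr H^1(\{u=0\}\cap\partial\Omega)+3|\{u=0\}|$, and a boundary segment alone can satisfy it. Positive area follows only when $\partial\Omega$ contains no segments. This is exactly why Theorem~\ref{C11est} must assume $|\{u=0\}|\ge\sigma_0$. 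The qualitative statement you attribute to \cite{RC98,MRZ20252} is therefore not available. As a result, your area estimate $|T\cap\Omega|\to0$ contradicts nothing, and the alternative with $y_*$ cannot be used.

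\textbf{The fix is to measure size by diameter rather than area; this is the paper's argument.} Armstrong's perturbation does show that $\{u=0\}$ is not a singleton. Since the zero set is convex, $d_0:=\diam\{u=0\}>0$, and you then need $\diam(T\cap\overline\Omega)\to0$ as $|x'-x''|\to0$. Argue along sequences, as you intended:
\begin{itemize}
\item Pass to a subsequence so that both endpoints converge to a single point $x_*\in\partial\Omega$.
\item Near $x_*$, write $\overline\Omega$ as the epigraph of a convex Lipschitz function $\phi$, with $x'=(s',\phi(s'))$ and $x''=(s'',\phi(s''))$.
\item The supporting lines $P',P''$ cannot be vertical, so their slopes lie in $\partial\phi(s')$ and $\partial\phi(s'')$. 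These slopes are uniformly bounded and bracket the chord slope.
\item Hence $P'$ and $P''$ meet below the chord, at a point with first coordinate in $[s',s'']$, and the whole triangle $T$ shrinks to $x_*$.
\end{itemize}
This also settles your uncertainty about which side of $I$ the triangle lies on: it is always the side of the short boundary arc. Then $d_0\le\diam(T\cap\overline\Omega)\to0$ gives the contradiction, with no need for Blaschke selection or any area bound.
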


\begin{proof}
By Armstrong's perturbation argument \cite{A1996}, the set $\{u=0\}$ is not a
singleton. Since $\{u=0\}$ is convex, this implies
\[
d_0:=\diam(\{u=0\})>0.
\]
Suppose by contradiction that there exists a sequence of leaves
\[
I_k=[x_k',x_k'']\subset\Omega_1^0,\qquad \text{with}\quad |x_k'-x_k''|\to0.
\]
Let $T_k$ denote the triangular region associated with $I_k$ by
Lemma~\ref{lower-bound-length}. Then
\[
\{u=0\}\cap\overline\Omega\subset T_k\cap\overline\Omega,
\]
hence
\[
d_0\le \diam(T_k\cap\overline\Omega)
\qquad\text{for every }k.
\]
On the other hand, because $\Omega$ is bounded and convex, the diameter of
$T_k\cap\overline\Omega$ tends to zero whenever the chord $I_k$ shrinks to a
point.
This contradicts $d_0\le \diam(T_k\cap\overline\Omega)$ for $k$ large enough.
Thus every two-ended leaf has diameter bounded from below by a universal
constant $\delta_0>0$.
\end{proof}

We now return to the perturbation argument, this time keeping track of the
boundary contribution in the variational identity.
By the global Lipschitz regularity of $u$ \cite{RC98,CL2001}, we may fix
\[
M:=\|Du\|_{L^\infty(\Omega)}<\infty.
\]

\begin{lem}\label{lem:local-perturb}
Fix $x_0\in \Omega\setminus \operatorname{int}\Omega_0$ and
$r \in (0, \dist(x_0,\partial\Omega))$.
Let
\[
h:=h_{x_0,r}:=\sup_{B_r(x_0)}(u-p_{x_0}),
\]
and let $\xi$, $p_{x_0,r}^0$, $S_{x_0,r}^0$, and $u_{x_0,r}^0$ be as in
Lemma~\ref{lem:interior-perturb}. Then
\begin{equation}\label{key-ineq0}
\frac{h}{r^2}\le C(\Omega,M)\left(
1+\frac{\mathscr H^1(\partial S_{x_0,r}^0\cap\partial\Omega)}
{|S_{x_0,r}^0|}
\right).
\end{equation}
Moreover, given a boundary point $x'\in\partial\Omega$, set
\[
d:=|x_0-x'|,
\qquad
\eta:=\frac{x_0-x'}{|x_0-x'|},
\]
and define
\begin{equation}\label{tilted-plane}
p_{x_0,r}(x):=p_{x_0,r}^0(x)+\frac{4h}{d}\Bigl((x-x_0)\cdot\eta+r\,\xi\cdot\eta\Bigr).
\end{equation}
Let
\[
S_{x_0,r}:=\{u<p_{x_0,r}\},
\qquad
u_{x_0,r}:=\max\{u,p_{x_0,r}\}.
\]
If $r\in (0,d/16)$ is sufficiently small, then
\begin{equation}\label{contain2}
S_{x_0,r}\subset \{x\in\Omega:\ -2r<(x-x_0)\cdot \xi<2r\},
\end{equation}
\begin{equation}\label{contain3}
S_{x_0,r}\subset \left\{x\in\Omega:\ (x-x_0)\cdot \eta\ge -\frac34\,d\right\},
\end{equation}
and
\begin{equation}\label{key-ineq}
\frac{h}{r^2}\le C(\Omega,M)\left(
1+\frac{\mathscr H^1(\partial S_{x_0,r}\cap\partial\Omega)}
{|S_{x_0,r}|}
\right).
\end{equation}
\end{lem}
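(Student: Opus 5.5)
We plan to prove \eqref{key-ineq0} and \eqref{key-ineq} by the same variational computation used in Lemma~\ref{lem:interior-perturb}. This time we keep the boundary term in \eqref{var-id}.

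\emph{Step 1: the estimate \eqref{key-ineq0}.} Take $p=p_{x_0,r}^0$ and $S=S_{x_0,r}^0$. Minimality gives
\[
0\le -\frac12\int_S|Dw|^2\,dx+\int_{\partial S\cap\partial\Omega} w\,(Dp-x)\cdot\nu_\Omega\,d\mathscr H^1+3\int_S w\,dx .
\]
We bound each term as follows.
\begin{itemize}
\item By \eqref{contain1}, $0\le w\le h$ on $S$, exactly as in Lemma~\ref{lem:interior-perturb}.
\item Since $|Du|\le M$ gives $h\le 2Mr$, we have $|Dp|\le M+h/(2r)\le 2M$. Boundedness of $\Omega$ then gives $|Dp-x|\le C(\Omega,M)$.
\item The Dirichlet term is bounded below by \eqref{CLest}, valid for $r$ small.
\end{itemize}
Together these yield
\[
\tfrac12 c_0\tfrac{h^2}{r^2}|S|\le C(\Omega,M)\,h\,\mathscr H^1(\partial S\cap\partial\Omega)+3h|S|.
\]
Dividing by $h|S|$ gives \eqref{key-ineq0}.

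\emph{Step 2: the containments for the tilted plane.} Write $p_{x_0,r}=p^0_{x_0,r}+q$ with
\[
q(x)=\tfrac{4h}{d}\bigl((x-x_0)\cdot\eta+r\,\xi\cdot\eta\bigr).
\]
For \eqref{contain2}, fix $x$ with $(x-x_0)\cdot\xi\ge 2r$, or symmetrically $\le-2r$. Convexity along the ray from $x_0$ in direction $\xi$, together with the defining property of $h$ at $x_0+r\xi$ (and the usual Caffarelli--Lions monotonicity argument behind \eqref{contain1}), shows that $u-p_{x_0}$ grows at least linearly beyond the strip, at rate about $h/r$. The perturbation $q$ has slope only $4h/d\le h/(4r)$ since $r<d/16$, so it cannot compensate. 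For \eqref{contain3}, at points with $(x-x_0)\cdot\eta<-\tfrac34 d$ we have
\[
q\le \tfrac{4h}{d}\bigl(-\tfrac34 d+r\bigr)<-2h,
\]
while $p^0_{x_0,r}-p_{x_0}\le h+\tfrac{h}{2r}|(x-x_0)\cdot\xi|$. Combined with \eqref{contain2}, this gives $p_{x_0,r}<p_{x_0}\le u$ there.

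\emph{Step 3: the estimate \eqref{key-ineq}.} We repeat Step 1 with $S=S_{x_0,r}$. Using \eqref{contain2} and $|(x-x_0)\cdot\eta|\le\diam\Omega$, we get $0\le w\le C h$ on $S$, with $C$ depending on $d/r$ only through the admissible range. The gradient perturbation is also controlled: $|Dp-Dp^0|\le 4h/d\le h/(4r)$.

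\emph{Main obstacle.} The delicate step is the analogue of \eqref{CLest} for the tilted section $S_{x_0,r}$, i.e. a lower bound
\[
\int_{S}|Du_{x_0,r}-Du|^2\ge c\,\tfrac{h^2}{r^2}|S|.
\]
We would redo the argument of \cite[Eq.~(41)]{MRZ20252}. Along each line parallel to $\xi$, the function $w$ rises from $0$ to a size comparable to $h$ over a length $\lesssim r$, by \eqref{contain2} and the value $p-u\gtrsim h$ near $x_0-r\xi$. One-dimensional Cauchy--Schwarz then gives $\int|\partial_\xi w|^2\gtrsim (h^2/r^2)\cdot\text{length}$. To pass from this to a bound in terms of $|S|$, we would use that on a definite fraction of $S$ the function $w$ is $\gtrsim h$, which follows from convexity of $S$ (it is a sublevel set). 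With the lower bound in hand, the same division as in Step 1 yields \eqref{key-ineq}.
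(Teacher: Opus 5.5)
Your Step~1 is the paper's proof of \eqref{key-ineq0}. For the tilted part, however, the paper derives nothing new. It only checks $x_0\in S_{x_0,r}$, via $p_{x_0,r}(x_0)-u(x_0)=\frac h2+\frac{4h}{d}r\,\xi\cdot\eta\ge\frac h4$. It then imports \eqref{contain2}, \eqref{contain3}, the tilted analogue of \eqref{CLest}, and the bound $0<u_{x_0,r}-u\le Ch$ from \cite[Proof of Theorem~4.1]{MRZ20252}.

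\textbf{The gap: your derivation of \eqref{contain2} fails.} The supporting plane of $u-p_{x_0}$ at $x_0+r\xi$ gives $u-p_{x_0}\ge\frac hr(x-x_0)\cdot\xi$ for $(x-x_0)\cdot\xi\ge r$. On the edge $\{(x-x_0)\cdot\xi=2r\}$ of the strip this yields only $u-p_{x_0}\ge 2h$. There, however, $p_{x_0,r}-p_{x_0}=\frac{3h}{2}+q(x)$, and $q$ depends on $(x-x_0)\cdot\eta$, which $(x-x_0)\cdot\xi$ does not control. Wherever $(x-x_0)\cdot\eta\ge d$ one has $q\ge\frac{15}{4}h$. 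Comparing the slope $4h/d$ of $q$ with the rate $h/r$ only controls increments in the $\xi$-direction, not the value of $q$ on the edge of the strip.

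The side $(x-x_0)\cdot\xi\le-2r$ is not symmetric either. There $u-p_{x_0}$ has no growth at all, and you only know $p^0_{x_0,r}-p_{x_0}\le-\frac h2$. So you would need $q\le\frac h2$, which fails as soon as $(x-x_0)\cdot\eta>\frac d8+r$.

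This bites exactly where the lemma is used: $x_0$ on a two-ended leaf, $x'$ its nearer endpoint, so $\eta$ is parallel to the leaf. Since $u=p_{x_0}$ along the whole leaf, $S_{x_0,r}$ contains a neighbourhood of the leaf up to near $x''$, where $(x-x_0)\cdot\eta\approx|x_0-x''|\ge d$. Whether the section stays in the strip there depends on the transversal growth of $u$ far from $x_0$. That information is not contained in $h$, $\xi$ and convexity, and it is precisely what the paper takes from \cite{MRZ20252}. Your deduction of \eqref{contain3} from \eqref{contain2} is correct in itself, but it inherits this gap.

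\textbf{Two smaller problems in Step~3.}
\begin{enumerate}
\item From \eqref{contain2} and $|(x-x_0)\cdot\eta|\le\diam\Omega$ you only get $w\le\frac32h+\frac{4h}{d}(\diam\Omega+r)$. This constant depends on $\diam\Omega/d$, not just on $\Omega$ and $M$ as \eqref{key-ineq} requires.
\item In your sketch of the tilted \eqref{CLest}, $w$ is not large near $x_0-r\xi$. The normalization gives $p_{x_0,r}(x_0-r\xi)=p_{x_0}(x_0-r\xi)\le u(x_0-r\xi)$, so $w=0$ there; the large value is $w(x_0)\ge\frac h4$. What gives $w\ge\frac h8$ on a quarter of $S$ is concavity of $w=p_{x_0,r}-u$ on $S$: it dominates $\frac h8$ on the homothetic copy $\frac12(x_0+S)$. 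Mere convexity of $S$ does not give this. Also, the one-dimensional Poincar\'e step along $\xi$-chords needs $w=0$ at some endpoint, which fails for chords with both ends on $\partial\Omega$.
\end{enumerate}
These two points can be repaired. The localization \eqref{contain2}, by contrast, cannot be obtained by the argument you propose.
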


\begin{proof}
We first derive the estimate for the untilted section. By
Lemma~\ref{lem:interior-perturb}, the strip inclusion \eqref{contain1}, the
lower bound \eqref{CLest}, and the variational identity \eqref{var-id} all hold.

For \(x\in B_r(x_0)\) we have
\[
u(x)-p_{x_0}(x)
\le |u(x)-u(x_0)|+|Du(x_0)|\,|x-x_0|
\le Mr+Mr=2Mr.
\]
Hence
\[
h\le 2Mr,
\qquad\text{and therefore}\qquad
\frac{h}{r}\le 2M.
\]
It follows that
\[
|Dp_{x_0,r}^0|
= \left|Du(x_0)+\frac{h}{2r}\xi\right|
\le M+\frac{h}{2r}\le 2M.
\]
Applying \eqref{var-id} with \(p=p_{x_0,r}^0\) and using minimality of \(u\), we obtain
\begin{align*}
0
&\le -\frac12\int_{S_{x_0,r}^0}|Du_{x_0,r}^0-Du|^2\,dx \\
&\quad +\int_{\partial S_{x_0,r}^0\cap\partial\Omega}
(u_{x_0,r}^0-u)\bigl[(Dp_{x_0,r}^0-x)\cdot\nu_\Omega\bigr]\,d\mathscr H^1
+3\int_{S_{x_0,r}^0}(u_{x_0,r}^0-u)\,dx.
\end{align*}
Now \eqref{contain1} gives
\[
0<u_{x_0,r}^0-u=p_{x_0,r}^0-u\le p_{x_0,r}^0-p_{x_0}\le h
\qquad\text{on }S_{x_0,r}^0,
\]
hence
\[
\int_{S_{x_0,r}^0}(u_{x_0,r}^0-u)\,dx\le h\,|S_{x_0,r}^0|.
\]
Also
\[
|(Dp_{x_0,r}^0-x)\cdot\nu_\Omega|
\le |Dp_{x_0,r}^0|+|x|
\le C(\Omega,M)
\qquad\text{on }\partial\Omega.
\]
Combining these bounds with \eqref{CLest} yields
\[
0\le
-\frac12 c_0\frac{h^2}{r^2}|S_{x_0,r}^0|
+C(\Omega,M)\,h\,\mathscr H^1(\partial S_{x_0,r}^0\cap\partial\Omega)
+3h\,|S_{x_0,r}^0|,
\]
and dividing by \(h\,|S_{x_0,r}^0|\) gives \eqref{key-ineq0}.

We turn to the tilted construction. The normalization in \eqref{tilted-plane}
is chosen so that
\[
p_{x_0,r}(x_0-r\xi)=p_{x_0,r}^0(x_0-r\xi).
\]
This is exactly the normalization used in \cite[Proof of Theorem~4.1]{MRZ20252}
to preserve the strip localization under tilting. For \(r\le d/16\) one has
\[
p_{x_0,r}(x_0)-u(x_0)
=
\frac h2+\frac{4h}{d}\,r\,\xi\cdot\eta
\ge \frac h2-\frac{4hr}{d}
\ge \frac h4,
\]
hence \(x_0\in S_{x_0,r}\). Moreover, by
\cite[Proof of Theorem~4.1]{MRZ20252}, for \(r\) sufficiently small
\eqref{contain2}, \eqref{contain3}, and the analogue of \eqref{CLest} hold:
\[
\int_{S_{x_0,r}} |Du_{x_0,r}-Du|^2\,dx
\ge c_0\frac{h^2}{r^2}|S_{x_0,r}|
\]
for some universal \(c_0>0\).
Using again \(h\le 2Mr\) and \(r<d\), we get
\[
|Dp_{x_0,r}|
=
\left|Du(x_0)+\frac{h}{2r}\xi+\frac{4h}{d}\eta\right|
\le M+\frac{h}{2r}+\frac{4h}{d}
\le C\,M.
\]
Also, as in \cite[Proof of Theorem~4.1]{MRZ20252}, the localizations
\eqref{contain2}--\eqref{contain3} imply
\[
0<u_{x_0,r}-u\le C h
\qquad\text{on }S_{x_0,r},
\]
hence
\[
\int_{S_{x_0,r}}(u_{x_0,r}-u)\,dx\le C h |S_{x_0,r}|.
\]
Applying \eqref{var-id} with \(p=p_{x_0,r}\), and arguing exactly as above, we
obtain
\[
0\le
-\frac12 c_0\frac{h^2}{r^2}|S_{x_0,r}|
+C(\Omega,M)\,h\,\mathscr H^1(\partial S_{x_0,r}\cap\partial\Omega)
+C h\,|S_{x_0,r}|,
\]
which gives \eqref{key-ineq}.
\end{proof}

The next step is to prove a uniform $C^{1,1}$ regularity in a strip between two leaves.

\begin{lem}\label{lem:strip-C11}
Let \(I_1,I_2\subset \Omega_1^0\) be two distinct leaves, and let
\(V\subset \Omega\setminus (I_1\cup I_2)\) be the connected component between
\(I_1\) and \(I_2\). Then
\[
\|D^2u\|_{L^\infty(\overline V\cap\Omega)}\le C(\Omega,I_1,I_2).
\]
In particular, two distinct leaves of \(\Omega_1^0\) cannot meet at the same
boundary point.
\end{lem}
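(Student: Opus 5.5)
The plan is to bound $D^2u$ at every point $x_0\in V$ using the tilted perturbation of Lemma~\ref{lem:local-perturb}, namely estimate \eqref{key-ineq}, together with the interior bound of Lemma~\ref{lem:interior-perturb} and Remark~\ref{rem:goodC11} on $\Omega_1'\cup\Omega_2$. First, points of $V$ in $\operatorname{int}\Omega_0\cup\Omega_1'\cup\Omega_2$ are already handled: they satisfy $|D^2u|\le C_0$, and the universal constant is at most $C(\Omega,I_1,I_2)$. So the only remaining points lie on two-ended leaves inside $V$. On such a leaf $u$ is affine, and the leaves of $\Omega_1^0$ inside $V$ are pairwise disjoint chords lying between $I_1$ and $I_2$. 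By Lemma~\ref{lem:leaf-lower-bound} each has length $\ge\delta_0$. The real task is therefore to control the quadratic growth $h_{x_0,r}\le Cr^2$ uniformly, for $x_0$ near $\partial\Omega\cap\overline V$, in terms of the geometry of $I_1,I_2$.

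The key step is to bound the ratio $\mathscr H^1(\partial S\cap\partial\Omega)/|S|$ appearing in \eqref{key-ineq}. Fix $x_0\in V$ and $r$ small. The section $S=S_{x_0,r}$ is convex, contains $x_0$, and lies in the strip \eqref{contain2} of width $4r$ in direction $\xi$. Because $u$ is affine on the leaves $I_1,I_2$, and $p_{x_0,r}$ is a small perturbation of the supporting plane $p_{x_0}$, the section should not cross $I_1$ or $I_2$ far from $x_0$. Concretely, the plan is to show that $S$ is trapped in a region near $V$ whose boundary portion on $\partial\Omega$ is comparable to its width.

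For this I would choose the boundary point $x'$ in the tilt \eqref{tilted-plane} to be the endpoint of the leaf through (or nearest to) $x_0$ on the side where $S$ would hit $\partial\Omega$. Then \eqref{contain3} prevents $S$ from reaching that boundary arc, and $S$ can touch $\partial\Omega$ only near the opposite arc, at distance $\gtrsim d$. Since the distinct leaves $I_1,I_2$ are transversal chords, the two boundary arcs of $\overline V\cap\partial\Omega$ make angles with the leaves that are bounded below in terms of $I_1,I_2$. A convex set of width $\sim r$ in a strip, containing the point $x_0$ where $p-u\ge h/4$, then has $|S|\gtrsim r\,\mathscr H^1(\partial S\cap\partial\Omega)$ once one also uses that $u$ grows at most quadratically along the leaf direction. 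This would give $h/r^2\le C(1+C/r)$, which is not yet good enough. To improve it, I would exploit that along the leaf direction $u-p_{x_0}$ vanishes, so $S$ extends a length $\gtrsim\delta_0$ along the leaf. This gives $|S|\gtrsim r\delta_0$ while $\mathscr H^1(\partial S\cap\partial\Omega)\lesssim r/\sin\alpha$, where $\alpha$ is the angle between the leaves and $\partial\Omega$, which is controlled by $I_1,I_2$. Hence the ratio is bounded by $C(\Omega,I_1,I_2)$, and so $h\le Cr^2$, which yields the $L^\infty$ bound on $D^2u$ by the usual convexity argument.

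The main obstacle is this geometric estimate near the boundary: showing that the sections do not degenerate into thin slivers hugging $\partial\Omega$. That requires a uniform transversality angle of the leaves in $V$ with $\partial\Omega$. I would obtain it from monotonicity of the foliation: every leaf in $V$ lies between $I_1$ and $I_2$, so its endpoints lie on the two boundary arcs cut off by $I_1$ and $I_2$, and its direction is squeezed between theirs. The final claim then follows by contradiction. If $I_1$ and $I_2$ met at a boundary point $z$, then $Du$ would be Lipschitz on $\overline V$ with the two leaves carrying different affine functions whose gradients must agree at $z$. Moreover $V$ would be a wedge at $z$ in which $|D^2u|\le C$ forces $u$ to coincide to second order with both affine functions, making them equal. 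This contradicts distinctness via Lemma~\ref{lower-bound-length}, since the zero set would then have to lie in two incompatible triangles.
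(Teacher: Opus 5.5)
Your overall strategy matches the paper's: reduce via Proposition~\ref{prop:goodC11} to points $x_0\in V\cap\Omega_1^0$, tilt as in Lemma~\ref{lem:local-perturb}, and bound the ratio $\mathscr H^1(A)/|S|$ in \eqref{key-ineq}, where $A:=\partial S\cap\partial\Omega$. The gap is in how you bound that ratio.

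\emph{Your lower bound on $|S|$.} You claim $S$ extends a length $\gtrsim\delta_0$ along the leaf ``because $u-p_{x_0}$ vanishes there''. This does not follow. On $[x_0]$ one has $S\cap[x_0]=\{p_{x_0,r}-p_{x_0}>0\}$, and $p_{x_0,r}-p_{x_0}$ is affine with gradient $\frac{h}{2r}\xi+\frac{4h}{d}\eta$. Take $\eta=\tau_0$, pointing toward $x_0''$. If $\xi\cdot\tau_0$ is negative and not $O(r/d)$, this function changes sign at distance $\approx r/|\xi\cdot\tau_0|$ from $x_0$ along the leaf. So $S\cap[x_0]$ can have length $O(r)$, and nothing in your argument controls $\xi$.

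\emph{Your upper bound on $\mathscr H^1(A)$.} The estimate $\mathscr H^1(A)\lesssim r/\sin\alpha$ silently identifies the direction $\xi^\perp$ of the strip \eqref{contain2} with the leaf direction.

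\emph{Other issues in the main step.} You must tilt against the \emph{closer} endpoint, $|x_0-x_0'|\le|x_0-x_0''|$. This is what places the surviving boundary contact near $x_0''$, at distance $\ge\frac12|x_0'-x_0''|$ from $x_0$; ``the side where $S$ would hit'' does not determine it. Also, leaf directions in $V$ need not be squeezed between those of $I_1$ and $I_2$: take $I_1\parallel I_2$ with oblique chords in between. What you actually need is a uniform lower bound on $|\tau_0\cdot\nu''|$, which the paper extracts from the nesting of the triangles of Lemma~\ref{lower-bound-length}.

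\emph{The missing idea.} Use a direct convexity estimate. For small $r$, $A$ lies in a boundary chart near $x_0''$ and $x_0\in S$. Hence $\operatorname{co}(A\cup\{x_0\})\subset\overline S$, so
$|S|\ge c(\Omega)\,\dist(x_0,P'')\,\mathscr H^1(A)$, with $\dist(x_0,P'')=|x_0-x_0''|\,|\tau_0\cdot\nu''|\ge\delta^2/2$. This bounds the ratio at once, with no separate estimates of $|S|$ and $\mathscr H^1(A)$ in terms of $r$.

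Your two estimates can be rescued, but only by arguments you did not give:
\begin{enumerate}
\item Convexity gives $u-p_{x_0}\le h|x-x_0|/r$ on $B_r(x_0)$, hence $B_{r/10}(x_0)\subset S$. Together with a point of $A$ at distance $\gtrsim\delta$, convexity then yields $|S|\gtrsim r\delta$.
\item $A\neq\emptyset$ near $x_0''$ forces $|\xi\cdot\tau_0|\lesssim(r+\text{chart size})/\delta$. This is what makes the strip transversal to $\partial\Omega$ there.
\end{enumerate}

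\emph{The final statement.} Once $Du$ is Lipschitz on $\overline V$, the constant gradients on $I_1$ and $I_2$ both equal the limit at $z$. So the two leaves share the supporting affine function $\ell$. Then $u=\ell$ on $\operatorname{co}(I_1\cup I_2)$, which is two-dimensional, contradicting that the contact sets are segments (equivalently, $I_1=\{u=\ell\}=I_2$). Your ``second order'' step is superfluous. The appeal to ``incompatible triangles'' via Lemma~\ref{lower-bound-length} is not a valid contradiction as stated, since the two triangles need not be disjoint.
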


\begin{proof}
By Proposition~\ref{prop:goodC11}, it is enough to prove a uniform quadratic
estimate at points
\[
x_0\in V\cap \Omega_1^0.
\]
Fix such a point, and write
\[
[x_0]=[x_0',x_0''],
\qquad
\tau_0:=\frac{x_0''-x_0'}{|x_0''-x_0'|},
\]
where
\begin{equation}\label{x' small}
|x_0-x_0'|\le |x_0-x_0''|.
\end{equation}
Let \(T_1,T_2,T_0\) be the triangular regions associated with
\(I_1,I_2,[x_0]\), as in the statement of Lemma~\ref{lower-bound-length}.
Since \([x_0]\) lies between \(I_1\) and \(I_2\), these triangles are nested.
Consequently, there exists a constant
$\delta=\delta(\Omega,I_1,I_2)>0$
such that
\begin{equation}\label{eq:strip-length}
|x_0'-x_0''|\ge \delta,
\end{equation}
and one can choose supporting lines \(P'\), \(P''\) to \(\Omega\) at \(x_0'\),
\(x_0''\), with outward unit normals \(\nu'\), \(\nu''\), such that
\begin{equation}\label{eq:strip-angle}
|\tau_0\cdot \nu'|\ge \delta,
\qquad
|\tau_0\cdot \nu''|\ge \delta.
\end{equation}
Now fix
\[
0<r<\dist(x_0,\partial\Omega),
\qquad
h:=\sup_{B_r(x_0)}(u-p_{x_0}),
\]
and apply the tilted construction of Lemma~\ref{lem:local-perturb}, using
\(x_0'\) as the endpoint with respect to which we tilt. Denoting the resulting
section by
$S:=S_{x_0,r},$
estimate \eqref{key-ineq} gives
\begin{equation}\label{eq:strip-key}
\frac{h}{r^2}\le C(\Omega,M)\left(
1+\frac{\mathscr H^1(\partial S\cap\partial\Omega)}{|S|}
\right).
\end{equation}
Therefore it remains to prove
\begin{equation}\label{eq:strip-boundarysize}
\mathscr H^1(\partial S\cap\partial\Omega)\le C(\Omega,I_1,I_2)\,|S|.
\end{equation}

Let
$A:=\partial S\cap\partial\Omega.$
Then the tilt with respect to the endpoint \(x_0'\), together with
\eqref{x' small}, \eqref{eq:strip-length}, and \eqref{eq:strip-angle}, implies
that for all sufficiently small \(r\):

\smallskip
\noindent
(a) the boundary trace \(A\) is contained in a single boundary chart near the
endpoint \(x_0''\);

\smallskip
\noindent
(b) one has \(x_0\in S\).

Indeed, property (b) follows from Lemma~\ref{lem:local-perturb}, since we are
tilting with respect to the endpoint \(x_0'\) chosen so that
\eqref{x' small} holds.

Since \(P''\) is a supporting line at \(x_0''\), \eqref{x' small},
\eqref{eq:strip-length}, and \eqref{eq:strip-angle} yield
\[
\dist(x_0,P'')
=
|(x_0-x_0'')\cdot \nu''|
=
|x_0-x_0''|\,|\tau_0\cdot \nu''|
\ge \frac12 |x_0'-x_0''|\,|\tau_0\cdot \nu''|
\ge \frac{\delta^2}{2}.
\]
If \(A=\emptyset\), then \eqref{eq:strip-boundarysize} is trivial. Assume
\(A\neq\emptyset\). Since \(\overline S\) is convex and contains both \(A\) and
\(x_0\), we have
\[
\operatorname{co}(A\cup\{x_0\})\subset \overline S.
\]
Flattening \(\partial\Omega\) in the boundary chart near \(x_0''\), the length
of \(A\) is comparable to the length of its orthogonal projection onto \(P''\),
with constants depending only on \(\Omega\). Therefore
\[
|S|
\ge \bigl|\operatorname{co}(A\cup\{x_0\})\bigr|
\ge c(\Omega)\,\dist(x_0,P'')\,\mathscr H^1(A)
\ge c(\Omega,\delta)\,\mathscr H^1(A),
\]
which is \eqref{eq:strip-boundarysize}.

Plugging \eqref{eq:strip-boundarysize} into \eqref{eq:strip-key}, we obtain
\[
h\le C(\Omega,I_1,I_2)\,r^2.
\]
Since \(x_0\in V\cap \Omega_1^0\) was arbitrary, this proves a uniform local
\(C^{1,1}\) estimate at every point of \(V\cap \Omega_1^0\). Together with
Proposition~\ref{prop:goodC11} on \(\overline V\setminus \Omega_1^0\), we
conclude that
\[
\|D^2u\|_{L^\infty(\overline V\cap\Omega)}\le C(\Omega,I_1,I_2).
\]

For the final statement, suppose that \(I_1\) and \(I_2\) meet at the same
boundary point \(z\in\partial\Omega\). By the first part of the lemma,
\(Du\) extends continuously to \(z\) from inside \(V\); call the limit \(q\).
Also, given a leaf \(I\), let \(p(I)\) denote the (constant) gradient of \(u\)
on \(I\). Since \(Du\) is constant on each leaf, we must have
\[
p(I_1)=p(I_2)=q.
\]
Let
\[
\ell(x):=u(z)+q\cdot (x-z).
\]
Because \(q\in \partial u(z)\), we have \(u\ge \ell\) in \(\Omega\), and
\(u=\ell\) on \(I_1\cup I_2\). Hence the nonnegative convex function
\(u-\ell\) vanishes on the convex hull of \(I_1\cup I_2\), and therefore
\[
u=\ell \qquad\text{on } \operatorname{co}(I_1\cup I_2).
\]
Since \(I_1\) and \(I_2\) are distinct and share the point \(z\), this convex
hull contains a two-dimensional subset of \(\Omega\), contradicting the fact
that points of \(\Omega_1^0\) have one-dimensional contact set. This
contradiction shows that two distinct leaves cannot meet at the same boundary
point.
\end{proof}

Until now, we have only considered leaves contained inside $\Omega$. However, a limit of such leaves may converge to a segment on the boundary of $\Omega$ if the domain is not strictly convex. For this reason, we denote
\[
\mathscr L_1^0:=\{[x]:x\in \Omega_1^0\}
\]
the family of two-ended leaves, viewed as a subset of the space of nonempty
compact subsets of \(\overline\Omega\) endowed with the Hausdorff distance, and we define \(\overline{\mathscr L_1^0}^{\,H}\) to be
its Hausdorff closure. By Lemma~\ref{lem:leaf-lower-bound}, every element of
\(\overline{\mathscr L_1^0}^{\,H}\) is a segment of positive length with
endpoints on \(\partial\Omega\). Moreover, if \(I_k\in \mathscr L_1^0\) and
\(I_k\to I\) in Hausdorff distance, then passing to the limit in the supporting
affine functions shows that \(u\) agrees on \(I\) with a supporting affine
function. We call the elements of \(\overline{\mathscr L_1^0}^{\,H}\)
\emph{extended two-ended leaves}.

We now show that along sequences inside $\Omega_1^0$, the gradient of $u$ has a unique limit on the boundary.

\begin{lem}\label{lem:leaf-slope-limit}
Let $x_0\in \partial\Omega\cap \overline{\Omega_1^0}$.
Then there exists a unique vector
$q(x_0)\in \mathbb R^2$
such that, for every sequence
$x_k\in \Omega_1^0$ with
$x_k\to x_0,$
one has
\[
Du(x_k)\to q(x_0).
\]
\end{lem}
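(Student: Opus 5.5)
We argue by compactness plus a rigidity argument on the limiting leaves. Since $|Du|\le M$, every sequence $x_k\in\Omega_1^0$ with $x_k\to x_0$ has a subsequence along which $Du(x_k)\to q$ for some $q$. Moreover, the leaves $I_k:=[x_k]\in\mathscr L_1^0$ subconverge in Hausdorff distance to an extended two-ended leaf $I\ni x_0$. By Lemma~\ref{lem:leaf-lower-bound}, $I$ has length at least $\delta_0$. Passing to the limit in the supporting affine functions $\ell_k(x)=u(x_k)+Du(x_k)\cdot(x-x_k)$, and using the continuity of $u$ on $\overline\Omega$ (convex and Lipschitz), we get $u\ge \ell:=u(x_0)+q\cdot(x-x_0)$ on $\overline\Omega$, with equality on $I$. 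It therefore suffices to show that any two such subsequential limits $q,\tilde q$ coincide.

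Suppose two sequences give limits $(I,q)$ and $(\tilde I,\tilde q)$, with associated affine functions $\ell,\tilde\ell$. Both $\ell$ and $\tilde\ell$ support $u$ and touch it at $x_0$.

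\emph{Case 1: $I\neq\tilde I$.} Both are chords of length $\ge\delta_0$ through $x_0\in\partial\Omega$, so either they are not collinear and $\operatorname{co}(I\cup\tilde I)$ has nonempty interior in $\Omega$, or they are collinear. In the non-collinear case we follow the end of Lemma~\ref{lem:strip-C11}. Nearby two-ended leaves $I_k,\tilde I_k$ are disjoint in $\Omega$, and leaves of $\Omega_1^0$ are ordered, so every leaf of $\Omega_1^0$ lying in the region between $I_k$ and $\tilde I_k$ must pass close to $x_0$. Hence the wedge $\operatorname{co}(I\cup\tilde I)$ is a limit of regions swept only by leaves almost through $x_0$. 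Every interior point $y$ of this wedge is either in $\Omega_1^0$, whose leaf is squeezed between $I_k$ and $\tilde I_k$ and so passes arbitrarily close to $x_0$, which is impossible by Lemma~\ref{lem:strip-C11} and the nesting of the triangles $T$; or it lies in a component $U$ of $\Omega\setminus\Omega_1^0$. In the latter case Proposition~\ref{prop:goodC11} gives continuity of $Du$ on $\overline U$, and such a $U$ bounded by leaves converging to $I$ and $\tilde I$ forces $q=\tilde q$ via the Lipschitz extension at $x_0$. In the collinear case, lengths $\ge\delta_0$ force $I\cup\tilde I$ to span a single chord, and the two affine functions agree on a segment while both supporting $u$. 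The remaining freedom is the slope in the normal direction, which we handle as in Case 2.

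\emph{Case 2: $I=\tilde I$ but $q\neq\tilde q$.} Then $\ell-\tilde\ell$ vanishes on the line through $I$, and $q-\tilde q$ is normal to $I$. The leaves $I_k$ and $\tilde I_k$ converge to $I$, one from each side or both from the same side. If they lie on the same side, the ordering of leaves lets us interlace the two sequences monotonically. Between consecutive interlaced leaves, Lemma~\ref{lem:strip-C11} gives $C^{1,1}$ bounds on strips, but these are not uniform. Instead we use convexity along a transversal segment crossing all these leaves: since $Du$ restricted to this segment is monotone in the normal component, the normal derivative has a monotone limit as the leaves approach $I$, giving $q=\tilde q$. If $I_k$ and $\tilde I_k$ lie on opposite sides, then $I$ is an interior chord, hence itself a leaf in $\Omega_1^0$ with $\{u=\ell\}\supset I$. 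Monotonicity of $Du$ on a transversal segment through an interior point of $I$, combined with the interior $C^1$ regularity of $u$ \cite{CL2001}, gives $q=\tilde q=Du|_I$.

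The main obstacle is the non-collinear subcase of Case 1, that is, ruling out a fan of two-ended leaves with different slopes accumulating at a single boundary point $x_0$. We plan to handle it by combining the triangle containment of Lemma~\ref{lower-bound-length}, which places $\{u=0\}$ in every $T_k$, with the ordering of leaves. A fan at $x_0$ would force the intersection of the nested triangles to shrink toward $x_0$, contradicting $\operatorname{diam}\{u=0\}>0$, just as in the proof of Lemma~\ref{lem:leaf-lower-bound}.
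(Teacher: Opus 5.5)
\textbf{The gap is in the non-collinear part of your Case 1.} This is the heart of the lemma: two subsequential limits coming from distinct leaves $I\neq\tilde I$ through $x_0$. You do not actually prove it, and your planned argument cannot work.

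Lemma~\ref{lower-bound-length} only gives $\{u=0\}\subset T_J$ for every two-ended leaf $J$. For a fan of leaves issuing from $x_0$, these triangles can be nested instead of shrinking. For instance, in the unit disk with $x_0=(-1,0)$, the triangle of the chord from $x_0$ to $e^{i\beta}$ lies on the side of the arc from $\beta$ to $\pi$. So for every $\beta\in[\pi/3,2\pi/3]$ it contains the cap cut off by the chord to $e^{2\pi i/3}$. A zero set inside that cap is compatible with all of them, so no contradiction with $\operatorname{diam}\{u=0\}>0$ arises.

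Your intermediate claims are also unjustified:
\begin{itemize}
\item A leaf lying between $I_k$ and $\tilde I_k$ need not pass near $x_0$, and passing near $x_0$ is not contradictory by itself.
\item The alternative, a component $U$ of $\Omega\setminus\Omega_1^0$ bounded by leaves converging to $I$ and $\tilde I$, only helps if a single $U$ touches both $I$ and $\tilde I$ at $x_0$. In general infinitely many leaves and components accumulate there, and Proposition~\ref{prop:goodC11} gives no control across them.
\end{itemize}

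\textbf{The missing ingredient is the quantitative strip estimate of Lemma~\ref{lem:strip-C11}.} Between two leaves, the tilted Caffarelli--Lions perturbation together with $\mathscr H^1(\partial S\cap\partial\Omega)\lesssim|S|$ gives a $C^{1,1}$ bound on the whole strip. That boundary bound uses the lower bounds on leaf length and on the angle with the boundary normal at the far endpoint. Consequently $Du$ extends continuously to the pinch point, $Du|_I=Du|_{\tilde I}$, and two distinct leaves cannot share a boundary endpoint. The paper's Case~1 runs this argument uniformly over the compact family $\mathcal F$ of leaves near $I_*$, then invokes the last statement of Lemma~\ref{lem:strip-C11} to identify the limits.

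You could cite that statement directly when both limits are genuine leaves. You would still need a separate argument for the mixed configuration, where one limit is a genuine leaf from $x_0$ and the other a boundary segment through $x_0$. That configuration falls into your non-collinear case, and the paper excludes it using the non-crossing and ordering of leaves. Your transversal-monotonicity argument does not cover it, since it only compares limits lying on the same segment.

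The rest of your proposal is sound: $q=Du|_I$ for a genuine limit leaf by interior differentiability, and the monotonicity argument along a transversal when the limit lies in $\partial\Omega$, which is essentially the paper's Case~2.
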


\begin{proof}
Fix a sequence
\[
x_k\in \Omega_1^0,
\qquad
x_k\to x_0,
\]
and write
\[
I_k:=[x_k]\in \mathscr L_1^0,
\qquad
p_k:=Du(x_k).
\]
Since the leaves have uniformly positive length by
Lemma~\ref{lem:leaf-lower-bound}, after passing to a subsequence we may assume
that
\[
I_k\to I_*
\qquad\text{in the Hausdorff topology}
\]
for some extended two-ended leaf \(I_*\in \overline{\mathscr L_1^0}^{\,H}\).
Moreover, since \(x_k\in I_k\) and \(x_k\to x_0\), we have
\[
x_0\in I_*.
\]
We distinguish two cases.

\smallskip
\noindent\emph{Case 1: \(I_*\) is a genuine leaf.}
Since \(x_0\in \partial\Omega\) and \(\operatorname{relint}(I_*)\subset \Omega\),
the point \(x_0\) must be an endpoint of \(I_*\), so \(I_*=[x_0,b_*]\) for some \(b_*\in \partial \Omega\).
Let
\[
p_*:=Du\qquad\text{on }I_*,
\]
and choose two disjoint boundary arcs
\[
A\ni x_0,
\qquad
B\ni b_*,
\]
so small that every segment sufficiently close to \(I_*\) in the Hausdorff
topology and meeting a sufficiently small neighborhood of \(x_0\) has exactly
one endpoint on \(A\) and one endpoint on \(B\). Fix \(r_0>0\) and
\(\varepsilon_0>0\) so small that every leaf \(J\in \mathscr L_1^0\) satisfying
\[
d_H(J,I_*)\le \varepsilon_0,
\qquad
J\cap \overline B_{r_0}(x_0)\neq\varnothing
\]
has this property, and consider the family
\[
\mathcal F:=
\Bigl\{J\in \mathscr L_1^0:\ d_H(J,I_*)\le \varepsilon_0,\ 
J\cap \overline B_{r_0}(x_0)\neq\varnothing\Bigr\}.
\]
Since all leaves sufficiently close to \(I_*\) must intersect \(\Omega\), the family \(\mathcal F\) is compact in the Hausdorff topology.

For \(J=[a(J),b(J)]\in \mathcal F\), set
\[
\tau(J):=\frac{b(J)-a(J)}{|b(J)-a(J)|},
\qquad
\alpha(J):=\sup_{\nu\in N_\Omega(b(J))} |\tau(J)\cdot \nu|,
\]
where \(N_\Omega(b(J))\) denotes the set of all unit outward normals to \(\Omega\) at \(b(J)\).
By Lemma~\ref{lem:leaf-lower-bound},
\[
\mathscr H^1(J)\ge \delta_0(\Omega)>0
\qquad\text{for every }J\in \mathcal F.
\]
Also, because leaves in \(\mathcal F\) cannot be tangent to \(\partial\Omega\) at
their endpoint \(b(J)\), compactness gives
\[
\delta_*:=\inf_{J\in \mathcal F}\alpha(J)>0.
\]
Now let \(J_1,J_2\in \mathcal F\), and let \(V\) be the connected component of
\(\Omega\setminus (J_1\cup J_2)\) lying between them. The proof of
Lemma~\ref{lem:strip-C11} depends only on a lower bound on the leaf length, a lower bound on the angles, and the local geometry of \(\Omega\). Therefore there exists \(C_*>0\), depending only on \((x_0,I_*,\Omega)\), such that
\[
\|u\|_{C^{1,1}(\overline V\cap B_{r_0}(x_0))}\le C_*.
\]
Since \(I_k\to I_*\), we have \(I_k\in\mathcal F\) for \(k\) large. Choose
\(0<\rho<r_0\) so small that every \(J\in \mathcal F\) intersects
\(\partial B_\rho(x_0)\) in exactly one point, denoted by \(z(J)\).
Then \(z(I_k)\to z(I_*)\). Also, if \(V_k\) denotes the strip between \(I_k\) and \(I_*\),
\[
\|D^2u\|_{L^\infty(\overline V_k\cap B_{r_0}(x_0))}\le C_*.
\]
Thus, since \(Du\) is constant on each leaf,
\[
|p_k-p_*|
=
|Du(z(I_k))-Du(z(I_*))|
\le C_*\,|z(I_k)-z(I_*)|
\to 0,
\]
which proves that \(p_k\to p_*\).

\smallskip
\noindent\emph{Case 2: \(I_*\subset \partial\Omega\).}
In this case, \(I_*\) is a nondegenerate boundary segment containing \(x_0\). Let
\(\tau\) be a unit vector parallel to \(I_*\), and let \(K\) be the tangent cone
of \(\Omega\) at \(x_0\), i.e., the set of all limits of directions of points in \(\Omega\) approaching \(x_0\). Since \(I_*\subset \partial\Omega\), the vector \(\tau\)
spans one edge of \(K\). Choose two linearly independent unit vectors
\[
e_1,e_2\in \operatorname{int} K
\]
such that neither \(e_i\) is parallel to \(\tau\). Fix a point \(x_*\in \operatorname{relint}(I_*)\) and, for \(\varepsilon>0\) small, consider the segments
\[
\Gamma_i:=\{x_*+t e_i:\ 0\le t\le \varepsilon\},
\qquad i=1,2,
\]
which are contained in \(\overline\Omega\).
Define
\[
\psi_i(t):=u(x_*+t e_i),
\qquad 0\le t\le \varepsilon.
\]
Since \(u\) is convex, each \(\psi_i\) is convex on \([0,\varepsilon]\), hence
the right derivative \(\psi_i'(0^+)\) exists.
Because \(I_k\to I_*\) in the Hausdorff topology, the direction of \(I_k\)
converges to \(\tau\). Since each \(\Gamma_i\) is transversal to \(\tau\), for
\(k\) sufficiently large the leaf \(I_k\) intersects \(\Gamma_i\) in exactly one
point, say
\[
z_i^k=x_*+t_i^k e_i,
\qquad 0<t_i^k<\varepsilon,
\qquad i=1,2,
\]
with \(t_i^k\to 0\) as \(k\to\infty\).
Since \(z_i^k\in I_k\subset \Omega\) and \(Du\equiv p_k\) on \(I_k\), we have
\[
p_k\cdot e_i
=
Du(z_i^k)\cdot e_i
=
\psi_i'(t_i^k),
\qquad i=1,2,
\]
and by the monotonicity of \(\psi_i'\), it follows that
\[
\psi_i'(t_i^k)\to \psi_i'(0^+),
\qquad i=1,2.
\]
Hence
\[
p_k\cdot e_i\to \psi_i'(0^+),
\qquad i=1,2.
\]
Since \(e_1,e_2\) are linearly independent, there exists a unique vector \(q_{\rm tan}(x_0)\in \mathbb R^2\)
such that
\[
q_{\rm tan}(x_0)\cdot e_i=\psi_i'(0^+),
\qquad i=1,2.
\]
This proves that
\[
p_k\to q_{\rm tan}(x_0).
\]

We now prove that the limit is independent of the chosen subsequence.
If two subsequences fall under Case~1, with limiting genuine leaves
\[
I_*'=[x_0,b'],
\qquad
I_*''=[x_0,b''],
\]
then \(I_*'=I_*''\) by the last statement of Lemma~\ref{lem:strip-C11}, since
two distinct leaves cannot meet at the same boundary point. Hence the limits coincide.

If two subsequences fall under Case~2, then both limits equal \(q_{\rm tan}(x_0)\), because this vector is determined only by the fixed rays \(\Gamma_1,\Gamma_2\) and the traces of \(u\) on them.

Assume now that one subsequence falls under Case~1, with limiting genuine leaf
$I_*=[x_0,b_*],$
and another subsequence falls under Case~2, with limiting boundary segment
$J_*\subset\partial\Omega$
containing \(x_0\). We claim that this is impossible.
Indeed, let \(J_k\) be the leaves from the Case~2 subsequence. Since leaves do not cross,
they are totally ordered in a neighborhood of \(x_0\). 
Moreover, since \(I_*\) is a genuine leaf issuing from \(x_0\), again by the noncrossing
property each \(J_k\) must lie on the same side of \(I_*\). Hence, it follows that all the leaves \(\{J_k\}_{k \geq k_0}\) are contained in the region $V_{k_0}$ between \(I_*\) and \(J_{k_0}\).
In particular, choosing $k_0$ large enough so that $J_*\setminus \partial V_{k_0}\neq \emptyset$, the leaves \(\{J_k\}_{k \geq k_0}\) remain at a positive distance from a nontrivial portion of \(J_*\),
and therefore cannot converge to \(J_*\) in the Hausdorff topology.
This contradiction shows that the mixed case cannot occur.

In conclusion, every convergent subsequence of \(\{p_k\}\) has the same limit, and since the initial sequence \(\{x_k\}\) was arbitrary, this proves the lemma.
\end{proof}

We can now prove the global $C^1$ regularity in dimension two.

\begin{proof}[Proof of Theorem~\ref{thm:C1}]
Fix \(x_0\in\partial\Omega\).
If \(x_0\notin \overline{\Omega_1^0}\), then there exists \(r>0\) such that
\[
B_r(x_0)\cap\Omega_1^0=\emptyset.
\]
Proposition~\ref{prop:goodC11} then implies that \(Du\) admits a Lipschitz extension to \(B_r(x_0)\cap\overline\Omega\), and the conclusion follows.
Hence, we can assume that
\[
x_0\in \partial\Omega\cap \overline{\Omega_1^0}.
\]
By Lemma~\ref{lem:leaf-slope-limit}, there exists a unique vector
$q(x_0)\in\mathbb R^2$
such that
\begin{equation}\label{eq:Du Omega10}
Du(z_k)\to q(x_0)
\qquad\text{for every sequence }z_k\in \Omega_1^0,\ z_k\to x_0.
\end{equation}
Now let \(y_k\to x_0\) with \(y_k\in \Omega\setminus\Omega_1^0\), denote by \(U_k\)
the connected component of \(\Omega\setminus \Omega_1^0\) containing \(y_k\), and
choose
$z_k\in \partial U_k\cap\Omega_1^0$
such that
\[
|y_k-z_k|\leq 2\dist(y_k,\Omega_1^0).
\]
Since \(x_0\in \overline{\Omega_1^0}\), we have
\[
z_k\to x_0,
\qquad
|y_k-z_k|\to 0.
\]
By Proposition~\ref{prop:goodC11}, \(Du\) is \(C_0\)-Lipschitz on \(U_k\) and extends with the same Lipschitz constant to \(\overline U_k\). Hence, thanks to \eqref{eq:Du Omega10},
\[
|Du(y_k)-q(x_0)|
\le |Du(y_k)-Du(z_k)|+|Du(z_k)-q(x_0)|
\le C_0|y_k-z_k|+|Du(z_k)-q(x_0)|
\to 0.
\]

In conclusion, we have shown that
\[
Du(x)\to q(x_0)\qquad\text{as }x\to x_0,\ x\in\Omega.
\]
Since \(x_0\in\partial\Omega\) was arbitrary, \(Du\) extends continuously to \(\overline\Omega\), and therefore
$u\in C^1(\overline\Omega).$
\end{proof}

\subsection{Global $C^{1,1}$ regularity in strictly convex domains}

\begin{proof}[Proof of Theorem~\ref{C11est}]
By Proposition~\ref{prop:goodC11}, it remains to prove the estimate at points
$x_0\in \Omega_1^0$.
Fix such an $x_0$, and write
\[
[x_0]=[x',x''].
\]
Let
\[
\tau:=\frac{x''-x'}{|x''-x'|}.
\]
By Lemma~\ref{lem:leaf-lower-bound}, there exists $\delta_0=\delta_0(\Omega)>0$
such that
\begin{equation}\label{leaf-lower-bound-C11}
|x'-x''|\ge \delta_0
\qquad\text{for every }x_0\in\Omega_1^0.
\end{equation}

We claim that there exists
$\delta_1=\delta_1(\Omega)>0$ such that for every $x_0\in\Omega_1^0$, both of the endpoints $x',\,x''$ of the leaf,  and the corresponding supporting lines $P'$ and $P''$ to
$\Omega$ at $x'$ and $x''$ with outward unit normals $\nu'$ and $\nu''$ satisfying
\begin{equation}\label{good-endpoint}
|\tau\cdot \nu'|\ge \delta_1 \quad \text{ and }
\quad |\tau\cdot \nu''|\ge \delta_1,
\end{equation}
respectively. 
Indeed, if \eqref{good-endpoint} fails, for instance, at $x''$,  we could find a sequence of leaves with
$|x'-x''|\ge\delta_0$ and supporting normals at one endpoint such that
$|\tau\cdot\nu''|\to0$. 
Passing to a subsequence, the endpoints converge to two distinct boundary points, and the corresponding chord directions converge. The limiting chord would then be tangent to $\partial\Omega$ at one endpoint, which is impossible by Lemma~\ref{lower-bound-length} together with the assumption that $|\{u=0\}|\ge \sigma_0>0$. Hence \eqref{good-endpoint} holds.

By symmetry, we may label $x'$ and $x''$ so that
\begin{equation}\label{retained-point}
 |x_0-x'|\le |x_0-x''|.   
\end{equation} 
We tilt exactly as in Lemma~\ref{lem:local-perturb} (cf.\ \cite[(33)]{MRZ20252})
with respect to the endpoint $x'$, so as to shift the boundary  interaction to $x''$.
% For uniformity, we simply apply the tilted construction
% with respect to $x'$.
Thus, for all sufficiently small $r$, we obtain a section
$S:=S_{x_0,r}$
for which \eqref{key-ineq} holds. It is therefore enough to prove
\begin{equation}\label{boundarysize}
\mathscr H^1(\partial S\cap\partial\Omega)\le C(\Omega)\,|S|.
\end{equation}
The tilted section $S$ has the
following two properties for all sufficiently small $r$:

\smallskip
\noindent
(a) its boundary trace
$A:=\partial S\cap\partial\Omega$
is near the  endpoint $x''$;

\smallskip
\noindent
(b) one has $x_0\in S$.

Since $P''$ is a supporting line at $x''$, \eqref{retained-point} and
\eqref{good-endpoint} imply
\[
\dist(x_0,P'')
=
|(x_0-x'')\cdot \nu''| = |x_0-x''||\tau \cdot \nu''|
\ge c(\Omega).
\]
If $A=\emptyset$, then \eqref{boundarysize} is trivial. Assume $A\neq\emptyset$.
Because $\overline S$ is convex and contains both $A$ and $x_0$, we have
\[
\operatorname{co}(A\cup\{x_0\})\subset \overline S.
\]
Flattening $\partial\Omega$ in the boundary chart around $x''$, the length of
$A$ is comparable to the length of its orthogonal projection onto $P''$, with
constants depending only on $\Omega$. Therefore
\[
|S|
\ge \bigl|\operatorname{co}(A\cup\{x_0\})\bigr|
\ge c(\Omega)\,\dist(x_0,P'')\,\mathscr H^1(A)
\ge c(\Omega)\,\mathscr H^1(A),
\]
which is exactly \eqref{boundarysize}.

Plugging \eqref{boundarysize} into \eqref{key-ineq} gives $h_{x_0,r}\le C(\Omega)\,r^2$, therefore 
\[
u(x)-p_{x_0}(x)\le C(\Omega)\,|x-x_0|^2
\qquad\text{for }x\text{ near }x_0.
\]
Since $u$ is convex, this is equivalent to a local $L^\infty$ bound on $D^2u$ at $x_0$.
Together with Proposition~\ref{prop:goodC11}, this yields $\|D^2u\|_{L^\infty(\Omega)}\le C(\Omega)$.
Since the zero set 
$\{u=0\}$ has positive measure, $Du=0$ there. Hence, because $\Omega$ is convex, the Hessian bound controls the full $C^{1,1}$ norm, and we obtain
\[
\|u\|_{C^{1,1}(\Omega)}\le C_0(\Omega),
\]
as claimed.

When $\Omega$ is strictly convex, the existence of $\sigma_0>0$ follows directly from the  Armstrong's perturbation argument \cite{A1996}.
\end{proof}

\section{A counterexample to the global $C^{1,1}$ regularity}\label{sec3}

We present an example showing that, in general, one cannot expect a uniform
$C^{1,1}$ bound for minimizers on smooth convex domains whose boundary contains
flat portions.

%------------------------------------------------------------
\subsection{Construction of the domain}

Fix $\epsilon\in(0,1)$ and $a>0$, let $C_0>1$ denote the universal constant in
\eqref{c11good-rmk} from Remark~\ref{rem:goodC11}, and set
\[
\eta_\epsilon:=\frac{1}{1000\,C_0}\,\epsilon.
\]
Choose a concave function $g\in C^\infty((0,1))\cap C^0([0,1])$ such that
\begin{equation}\label{g1}
g(0)=0,\qquad g(x_1)=\frac14+x_1\ \ \text{for }x_1\in\Bigl(\frac12,\,1-\eta_\epsilon\Bigr),
\qquad g'(x_1)\to +\infty \ \text{as }x_1\to 0^+,
\end{equation}
and
\begin{equation}\label{g2}
g(1)=\frac54-\eta_\epsilon,
\qquad g'(x_1)\to -\infty \ \text{as }x_1\to 1^-.
\end{equation}
Moreover, we choose $g$ so that the embedded curve obtained by adjoining the vertical rays
$\{(0,x_2): x_2\le 0\}$ and $\{(1,x_2): x_2\le g(1)\}$ to the graph of $g$ over $(0,1)$ is $C^\infty$.

Define
\[
g_{\epsilon,a}(x_1):=\epsilon+g(x_1-a)\qquad\text{for }x_1\in[a,a+1],
\]
and let
\[
\Omega_{\epsilon,a}:=\{(x_1,x_2): a\le x_1\le a+1,\ |x_2|\le g_{\epsilon,a}(x_1)\}
\]
be the smooth bounded domain whose boundary consists of
\[
\{(a,x_2): -\epsilon\le x_2\le \epsilon\},\qquad
\{(a+1,x_2): -\epsilon-\tfrac54+\eta_\epsilon\le x_2\le \epsilon+\tfrac54-\eta_\epsilon\},
\]
and the two graphs
\[
\{(x_1,x_2): x_2=\pm g_{\epsilon,a}(x_1),\ a<x_1<a+1\}.
\]
Since $g_{\epsilon,a}$ is concave and positive, $\Omega_{\epsilon,a}$ is convex. Also, by construction,
\begin{equation}\label{diam-bound}
\diam(\Omega_{\epsilon,a})\le 5,\qquad |\Omega_{\epsilon,a}|\ge \frac34.
\end{equation}

\begin{figure}[t]
\centering
\begin{tikzpicture}[x=5.2cm,y=2.4cm,>=stealth,scale=0.7]

%--- parameters for the schematic (for drawing only)
\def\eps{0.18}   % corresponds to \epsilon
\def\eta{0.04}   % corresponds to \eta_\epsilon (small)
\pgfmathsetmacro{\H}{1.25-\eta+\eps} % = \epsilon + 5/4 - \eta_\epsilon

%--- rounding amount near x_1=a+1 (drawing only)
\def\rho{0.10}

%--- domain (schematic boundary with vertical tangents at x=0 and rounded corners at x=1)
\path[fill=gray!12,draw=black,thick]
  (0,-\eps) -- (0,\eps)
  .. controls (0,{0.78}) and ({1-2*\rho},{\H}) .. ({1-\rho},{\H})
  arc[start angle=90,end angle=0,radius=\rho]
  -- (1,{-\H+\rho})
  arc[start angle=0,end angle=-90,radius=\rho]
  .. controls ({1-2*\rho},{-\H}) and (0,{-0.78}) .. (0,-\eps)
  -- cycle;

%--- axes
\draw[->] (-1.6,0) -- (1.18,0) node[right] {$x_1$};
\draw[->] (-1.5,-1.35) -- (-1.5,1.35) node[above] {$x_2$};

%--- mark a and a+1
\node[below=4pt] at (0.05,0) {$a$};
\node[below=4pt] at (1.15,0) {$a+1$};

%--- brace
\draw[decorate,decoration={brace,amplitude=4pt}]
  (-0.02,-\eps) -- (-0.02,\eps)
  node[midway,left=6pt] {$2\epsilon$};

%--- labels
\node at (0.55,0.35) {$\Omega_{\epsilon,a}$};
\node[above] at (0.32,\H-0.12) {$x_2=\epsilon+g(x_1-a)$};
\node[below] at (0.32,-\H+0.12) {$x_2=-\epsilon-g(x_1-a)$};

%--- emphasize the flat sides
\draw[black,very thick] (0,-\eps) -- (0,\eps);
\draw[black,very thick] (1,{-\H+\rho}) -- (1,{\H-\rho});

\end{tikzpicture}
\caption{Schematic of the convex domain
$\Omega_{\epsilon,a}=\{(x_1,x_2):a\le x_1\le a+1,\ |x_2|\le \epsilon+g(x_1-a)\}$.
The curved parts represent the graphs $x_2=\pm(\epsilon+g(x_1-a))$ with vertical tangents at $x_1=a$ and $x_1=a+1$. The thick segments indicate the flat portions of the boundary.}
\end{figure}
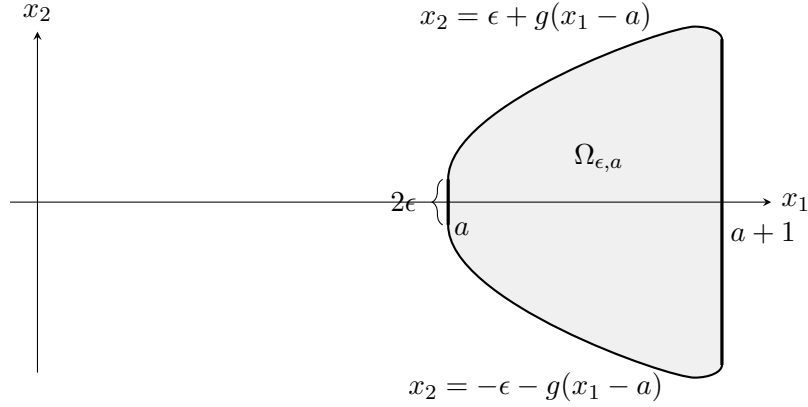

Let $u$ be the (unique) minimizer of $L[\,\cdot\,;\Omega_{\epsilon,a}]$ over $\mathcal K(\Omega_{\epsilon,a})$.
By the symmetry of $\Omega_{\epsilon,a}$ with respect to the $x_1$-axis and uniqueness, $u$ is even in $x_2$.

For $x\in\Omega_{\epsilon,a}$, recall the notation
\[
[x]=\{z\in\overline{\Omega_{\epsilon,a}}:\ u(z)=u(x)+Du(x)\cdot(z-x)\}.
\]
To simplify the notation, we denote by $\Omega_0,\Omega_1,\Omega_2$ the usual stratification of $\Omega_{\epsilon,a}$, and by $\Omega_1'$ the subset of one-ended leaves.
By Remark~\ref{rem:goodC11} applied to $\Omega=\Omega_{\epsilon,a}$,
we have the universal estimate
\begin{equation}\label{c11good}
\|D^2u\|_{L^\infty(\Omega'_{1}\cup\Omega_{2})}\le C_0,
\end{equation}
with a constant $C_0$ independent of $\epsilon$ and $a$.

We will repeatedly use the following distortion bounds at boundary points in the positive phase.

\smallskip
\noindent$\bullet$ If $z\in\partial\Omega_{\epsilon,a}$ satisfies $u(z)>0$ and
$[z]\cap\partial\Omega_{\epsilon,a}=\{z\}$, then there exists a universal constant $C_1>0$ such that
\begin{equation}\label{distortion-good}
\Bigl|\bigl(Du(z)-z\bigr)\cdot \nu_{\Omega_{\epsilon,a}}(z)\Bigr|
\le C_1\,\mathscr H^1([z])
\le C_1\,\diam(\Omega_{\epsilon,a})
\le 5 C_1,
\end{equation}
where we used \eqref{c11good}, \cite[Proposition~5.4]{MRZ20252}, and \eqref{diam-bound}.

\smallskip
\noindent$\bullet$ If $z\in\partial\Omega_{\epsilon,a}$ satisfies $u(z)>0$ and $[z]$
contains a nontrivial segment of $\partial\Omega_{\epsilon,a}$ through $z$, then
\cite[Proposition~2.3]{MRZ20252} implies that
\[
(Du(x)-x)\cdot\nu_{\Omega_{\epsilon,a}}(x)\ge0
\]
along that boundary segment, while \cite[Corollary~A.9]{MRZ20252} (taking $v=\pm1$) yields
\[
\int_{[z]\cap\partial\Omega_{\epsilon,a}}
(Du-x)\cdot\nu_{\Omega_{\epsilon,a}}\,d\mathscr H^1=0.
\]
Hence, in this case,
\begin{equation}\label{distortion-zero}
\bigl(Du(z)-z\bigr)\cdot \nu_{\Omega_{\epsilon,a}}(z)=0.
\end{equation}

In the next subsections we collect a series of results on the structure of minimizers in the
domains $\Omega_{\epsilon,a}$ for $a$ large and $\epsilon$ small. This will then be used to show
that, for suitable parameters, the minimizer of $L[\,\cdot\,;\Omega_{\epsilon,a}]$ is not
$C^{1,1}$ up to the boundary (and actually the modulus of continuity of $Du$ can be as bad as desired).

%------------------------------------------------------------
\subsection{Zeros on the left flat side}

\begin{lem}\label{left0}
Let $\Omega_{\epsilon,a}$ be as above and let $w$ be the minimizer of
$L[\,\cdot\,;\Omega_{\epsilon,a}]$. Set $z_0=(a,0)$. Then $w(z_0)=0$, provided that
$a\ge A$ for some large universal constant $A$.
\end{lem}

\begin{proof}
Since $L[w+t;\Omega_{\epsilon,a}]$ is strictly increasing in $t$ and $w\ge0$, the minimizer must satisfy
$\{w=0\}\neq\emptyset$.
By symmetry of $w$ with respect to the $x_1$-axis, $\{w=0\}$ meets the line $\{x_2=0\}$, so there exists
$b\in[a,a+1]$ such that $w(b,0)=0$.

Assume for contradiction that $w(z_0)>0$. Then necessarily $b>a$ and, by convexity of $t\mapsto w(t,0)$,
\[
\partial_{x_1}w(z_0)=\lim_{h\to0^+}\frac{w(a+h,0)-w(a,0)}{h}<0.
\]
Since $z_0\in\partial\Omega_{\epsilon,a}$ and $\nu_{\Omega_{\epsilon,a}}(z_0)=(-1,0)$, we obtain
\begin{equation}\label{distortionbig}
\bigl(Dw(z_0)-z_0\bigr)\cdot \nu_{\Omega_{\epsilon,a}}(z_0)
=(\partial_{x_1}w(z_0)-a)(-1)
=a-\partial_{x_1}w(z_0)\ge a.
\end{equation}

On the other hand, note that $w(b,0)=0$ with $b>a$ and $\{u=0\}$ is convex and symmetric with respect to the $x_1$-axis.
Then if there exists $y\neq0,\, |y|\le\ez$ for which $w(a,\,y)=0$, then $w(a,\,-y)=0$ as well, and then $w(z_0)=0$, a contradiction. Thus there exists $\delta>0$ such that $w>0$ in the closed slab
$\overline{\Omega_{\epsilon,a}}\cap\{a\le x_1\le a+\delta\}$.
In particular $z_0\notin\Omega_{0}$.
Moreover, for $z$ in this slab the contact set $[z]$ cannot have two distinct endpoints on
$\partial\Omega_{\epsilon,a}$, for otherwise Lemma~\ref{lower-bound-length} would force the triangle generated by $[z]$
and the supporting lines of $\Omega_{\epsilon,a}$ at its endpoints to contain $\{w=0\}$, contradicting $w>0$ in the slab.
Thus $z_0$ satisfies either $[z_0]\cap\partial\Omega_{\epsilon,a}=\{z_0\}$ or $[z_0]$ contains a boundary segment through $z_0$.
In the first case \eqref{distortion-good} gives
\[
\bigl|\bigl(Dw(z_0)-z_0\bigr)\cdot\nu_{\Omega_{\epsilon,a}}(z_0)\bigr|\le 5C_1,
\]
while in the second case \eqref{distortion-zero} yields
\[
\bigl(Dw(z_0)-z_0\bigr)\cdot\nu_{\Omega_{\epsilon,a}}(z_0)=0.
\]
Both contradict \eqref{distortionbig} once $a>5C_1$.
This proves $w(z_0)=0$ for all $a\ge 5C_1+1$.
\end{proof}

\begin{lem}\label{left01}
Let $w$ be the minimizer of $L[\,\cdot\,;\Omega_{\epsilon,a}]$. Set $z_0=(a,0)$, and suppose that
$w(z_0)=0$, $Dw(z_0)=0$, and $w>0$ in the interior of $\Omega_{\epsilon,a}$.
Then
\[
a>\frac{1}{8\epsilon}.
\]
\end{lem}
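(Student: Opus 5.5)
The plan is to test minimality of $w$ against the truncated competitor
\[
w_t:=\max\{w,t\}-t,\qquad t>0 \text{ small},
\]
which is convex and nonnegative, hence lies in $\mathcal K(\Omega_{\epsilon,a})$. Since $\Omega_{\epsilon,a}$ is symmetric with respect to the $x_1$-axis, $\{w=0\}$ is a convex set symmetric in $x_2$ and contained in $\partial\Omega_{\epsilon,a}$ (because $w>0$ in the interior). It contains $z_0=(a,0)$, and a convex subset of the boundary through $z_0$ must lie on the flat side $F:=\{(a,x_2):|x_2|\le\epsilon\}$. So $\{w=0\}\subset F$, and in particular $\mathscr H^1(\{w=0\})\le 2\epsilon$.

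\textbf{The energy comparison.} On $\{w\ge t\}$ we have $Dw_t=Dw$ and $w_t=w-t$. On $\{w<t\}$ we have $w_t\equiv0$. Hence
\[
0\le L[w_t]-L[w]=-t\,|\{w\ge t\}|+\int_{\{w<t\}}\Bigl(Dw\cdot x-\tfrac12|Dw|^2-w\Bigr)\,dx .
\]
I drop the nonpositive terms $-\tfrac12|Dw|^2-w$ and write $Dw=D(w-t)$. Integrating by parts on the set $\{w<t\}$ gives
\[
\int_{\{w<t\}}Dw\cdot x\,dx=\int_{\partial\Omega_{\epsilon,a}\cap\{w<t\}}(w-t)\,(x\cdot\nu)\,d\mathscr H^1-2\int_{\{w<t\}}(w-t)\,dx .
\]
This is legitimate by Theorem~\ref{thm:C1}: $w-t=0$ on $\partial\{w<t\}\cap\Omega_{\epsilon,a}$, and $w\in C^1(\overline{\Omega_{\epsilon,a}})$. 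The interior term is bounded by $2t\,|\{w<t\}|$. On $F$ we have $\nu=(-1,0)$, so $x\cdot\nu=-a$, and the boundary term there is $a\int_{F\cap\{w<t\}}(t-w)\le a\,t\cdot 2\epsilon$. On the rest of the boundary $|x\cdot\nu|\le C(a+1)$, so that contribution is at most $C(a+1)\,t\,\mathscr H^1\bigl((\partial\Omega_{\epsilon,a}\setminus F)\cap\{w<t\}\bigr)$.

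\textbf{Passing to the limit.} As $t\downarrow0$, the sets $\{w<t\}\cap\overline{\Omega_{\epsilon,a}}$ shrink to $\{w=0\}\subset F$ by continuity of $w$ up to the boundary. Hence $|\{w<t\}|\to0$, $|\{w\ge t\}|\to|\Omega_{\epsilon,a}|$, and the boundary length outside $F$ tends to $0$. Dividing the inequality by $t$ and letting $t\to0$ (with $a,\epsilon$ fixed) gives
\[
0\le-|\Omega_{\epsilon,a}|+2\epsilon a .
\]
By \eqref{diam-bound} this yields $a\ge \frac{3}{8\epsilon}>\frac1{8\epsilon}$.

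\textbf{Main obstacle.} The delicate step is justifying that the boundary part of $\{w<t\}$ outside the flat segment has vanishing length, i.e.\ that $\{w=0\}$ sits inside $F$ and does not reach the curved parts. This follows from the convexity and symmetry of the zero set together with $w>0$ in the interior. A second point is the integration by parts up to $\partial\Omega_{\epsilon,a}$, which is why the global $C^1$ regularity of Theorem~\ref{thm:C1} is needed. The hypothesis $Dw(z_0)=0$ is used only to make sure no extra boundary contribution arises at $z_0$; it is otherwise not essential for this estimate.
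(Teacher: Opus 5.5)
Your proof is correct and is essentially the paper's argument: the same truncation competitor $(w-t)_+=\max\{w,t\}-t$, the same energy identity, and the same division by $t$ as $t\downarrow 0$. The one difference is that you integrate by parts against $w-t$ rather than $w$. As a result, the flux is read off on the flat side $F$, where $x\cdot\nu=-a$ exactly, instead of on the level curve $\{w=t\}$. This avoids the paper's somewhat informal claim that the outward normals of that curve converge to $e_1$, and it gives the slightly stronger bound $a\ge 3/(8\epsilon)$.
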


\begin{proof}
By Armstrong's perturbation argument \cite{A1996}, $\{w=0\}$ cannot be a singleton.
Since $w(z_0)=0$ and $Dw(z_0)=0$, the supporting plane at $z_0$ is identically zero, hence
\[
\{w=0\}\subset [z_0].
\]
By assumption $w>0$ in the interior, so $\{w=0\}\subset \partial\Omega_{\epsilon,a}$.
Since $\{w=0\}$ is convex and contains $z_0$, it must contain a nontrivial segment of the left flat side
$\partial\Omega_{\epsilon,a}\cap\{x_1=a\}$.
Set
\[
\ell:=\mathscr H^1([z_0]\cap\partial\Omega_{\epsilon,a})\in(0,2\epsilon].
\]
For $h>0$ define the truncation
\[
w_h:=(w-h)_+=\max\{w-h,0\},
\qquad S_h:=\{w<h\}.
\]
Then $w_h\in\mathcal K(\Omega_{\epsilon,a})$, $w_h\le w$, $w_h=0$ on $S_h$ and $w_h=w-h$ on $\Omega_{\epsilon,a}\setminus S_h$.
By minimality, $0\le L[w_h;\Omega_{\epsilon,a}]-L[w;\Omega_{\epsilon,a}]$, and a direct computation
(using that $Dw_h=Dw$ a.e.\ on $\Omega_{\epsilon,a}\setminus S_h$ and $Dw_h=0$ a.e.\ on $S_h$)
gives
\[
L[w_h;\Omega_{\epsilon,a}]-L[w;\Omega_{\epsilon,a}]
=\int_{S_h}\Bigl[-\frac12|Dw|^2+x\cdot Dw-w\Bigr]\,dx
-h\,|\Omega_{\epsilon,a}\setminus S_h|.
\]
Since the bracketed term is bounded above by $x\cdot Dw$ and $w$ is $C^1$ up to the boundary, we obtain
\begin{equation}\label{trunc-upper}
0\le L[w_h]-L[w]\le \int_{S_h} x\cdot Dw\,dx - h\,|\Omega_{\epsilon,a}\setminus S_h|.
\end{equation}
Integrating by parts on $S_h$ yields
\[
\int_{S_h} x\cdot Dw\,dx
=\int_{\partial S_h} w\, (x\cdot\nu_{S_h})\,d\mathscr H^1 -2\int_{S_h} w\,dx
\le h\int_{\partial S_h\setminus [z_0]} (x\cdot\nu_{S_h})_+\,d\mathscr H^1,
\]
because $w=0$ on $\partial S_h\cap[z_0]$, and $w\le h$ on $\partial S_h$.

As $h\to0$, the sets $S_h$ converge to $[z_0]$ in Hausdorff distance and in particular
\[
\mathscr H^1(\partial S_h\cap\Omega_{\epsilon,a})\to \ell.
\]
Moreover, $\partial S_h\setminus [z_0]$ converges to $[z_0]$ and its outward normal converges to $e_1$.
Thus, since $[z_0]\subset \{x_1=a\}$,
\[
(x\cdot\nu_{S_h})_+=(x\cdot e_1)_+(1+o(1))=a_+(1+o(1))
\qquad \text{on }\partial S_h\setminus [z_0].
\]
Therefore
\begin{equation}\label{xdubound}
\int_{S_h} x\cdot Dw\,dx \le h\,a_+\,\ell\,(1+o(1))
\qquad\text{as }h\to0.
\end{equation}
On the other hand, since $S_h\to [z_0]$ and $[z_0]$ has zero area, we have
$|\Omega_{\epsilon,a}\setminus S_h|\to|\Omega_{\epsilon,a}|$ as $h\to0$; hence for $h$ small,
\[
|\Omega_{\epsilon,a}\setminus S_h|\ge \frac12|\Omega_{\epsilon,a}|\ge \frac38.
\]
Combining this with \eqref{trunc-upper}--\eqref{xdubound} and dividing by $h$ gives
\[
0\le a_+\,\ell\,(1+o(1))-\frac38.
\]
This implies that $a$ is positive, with $a\ge \frac{3}{8\ell}$.
Since $\ell\le 2\epsilon$, it follows that $a> \frac{1}{8\epsilon}$, as desired.
\end{proof}

\begin{lem}\label{left1}
Let $C_1$ be as in \eqref{distortion-good}, let $a\ge 30C_1$, let $0\le \sigma<a/100$, and let $w$ be the minimizer of $L[\,\cdot\,;\Omega_{\epsilon,a}]$.
Set $z_0=(a,0)$, and suppose that
\[
w(z_0)=0,
\qquad
Dw(z_0)=\sigma e_1.
\]
Then
\[
w(z)=0 \quad \text{and} \quad Dw(z)=\sigma e_1
\qquad \text{for all }z\in \partial\Omega_{\epsilon,a}\cap\{x_1=a\}.
\]
\end{lem}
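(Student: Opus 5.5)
The plan is to argue by contradiction along the left flat side, using the boundary distortion bounds \eqref{distortion-good}--\eqref{distortion-zero} in the same way as in Lemma~\ref{left0}. Set $F:=\partial\Omega_{\epsilon,a}\cap\{x_1=a\}$ and $\ell(x):=\sigma(x_1-a)$, which is the supporting affine function at $z_0$. Then $\ell\equiv0$ on $F$, so the set $Z:=\{w=0\}\cap F=[z_0]\cap F$ is a closed segment containing $z_0$. By evenness of $w$ in $x_2$ it is symmetric, say $Z=\{a\}\times[-t_*,t_*]$ with $0\le t_*\le\epsilon$. Suppose $t_*<\epsilon$ and set $z_*:=(a,t_*)$.

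\emph{Step 1: identify the gradient on $Z$.} By Theorem~\ref{thm:C1}, $Dw$ is continuous up to $\partial\Omega_{\epsilon,a}$. Along $F$, the trace $t\mapsto w(a,t)$ is convex and vanishes on $[-t_*,t_*]$, and it is differentiable because $w\in C^1(\overline\Omega_{\epsilon,a})$. Hence $\partial_2 w=0$ on $Z$. For the normal derivative, $w\ge\ell$ with equality on $Z$ gives $\partial_1 w\ge\sigma$ on $Z$. For the reverse inequality I would use convexity along segments joining points of $Z$ to interior points near $z_0$, together with $Dw(z_0)=\sigma e_1$ and the monotonicity of $Dw$, to show that $Dw\equiv\sigma e_1$ on $Z$; in particular $Dw(z_*)=\sigma e_1$. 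This already gives the claimed gradient identity on $Z$. It also gives the lemma outright if $t_*=\epsilon$, so from now on assume $t_*<\epsilon$.

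\emph{Step 2: large distortion just above $z_*$.} By continuity of $Dw$, there is $\rho>0$ such that for $z=(a,t)$ with $t_*<t<t_*+\rho$ we have $w(z)>0$ and $\partial_1 w(z)\le\sigma+1<a/100+1$. Since $\nu=-e_1$ on $F$, this gives
\[
\bigl(Dw(z)-z\bigr)\cdot\nu(z)=a-\partial_1 w(z)\ge \tfrac{a}{2},
\]
and $a/2$ exceeds $5C_1$ because $a\ge30C_1$.
\begin{itemize}
\item If $[z]\cap\partial\Omega_{\epsilon,a}=\{z\}$, this contradicts \eqref{distortion-good}.
\item If $[z]$ contains a boundary segment through $z$, it contradicts \eqref{distortion-zero}.
\end{itemize}

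\emph{Step 3 (the main obstacle): rule out two-ended leaves from $z$.} The remaining case is that $[z]$ is a two-ended leaf entering $\Omega_{\epsilon,a}$ from $z$. I would first try to rule this out by letting $t\downarrow t_*$. By Lemma~\ref{lem:leaf-lower-bound} these leaves have length at least $\delta_0$, so they subconverge in the Hausdorff topology to a segment $I_*\ni z_*$ on which $w$ agrees with an affine function with gradient $\lim Dw(z)=\sigma e_1$. Then $I_*\subset\{w=\ell\}=[z_0]$. By Lemma~\ref{lem:strip-C11}, two distinct leaves cannot share a boundary point, and leaves do not cross; together with the symmetric picture at $(a,-t_*)$, this should force $[z_0]$ to contain a two-dimensional region. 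That contradicts $\ell$ being a supporting plane with $\ell>0$ away from $\{x_1=a\}$ unless $\sigma=0$.

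If this route stalls, the fallback is to apply Lemma~\ref{lower-bound-length} to $[z]$. The supporting line at $z$ is $\{x_1=a\}$, so $Z$ would have to lie in the triangle $T$ bounded by $[z]$, $\{x_1=a\}$, and the supporting line at the far endpoint. Combining this with the symmetric leaf through $(a,-t)$ and non-crossing should confine $\{w=0\}$ to a set incompatible with $z_0\in Z$. Either way, every case contradicts $t_*<\epsilon$. Hence $Z=F$, and Step 1 gives $Dw=\sigma e_1$ on all of $F$.
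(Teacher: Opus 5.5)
Your Steps~1--2 are correct and amount to the paper's Step~2. At an endpoint of $[z_0]\cap\{x_1=a\}$ the gradient is $\sigma e_1$, so nearby boundary points have distortion of order $a$, which is incompatible with \eqref{distortion-good}--\eqref{distortion-zero}. (In Step~2, use $\partial_1 w\le \sigma+a/100$ instead of $\sigma+1$, so that $a-\partial_1 w\ge a/2$ holds for every $a\ge 30C_1$.)

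The gap is Step~3. It is the real content of the lemma, and you leave it at the level of ``should''. Your first route fails exactly in the case that matters. When $\sigma=0$, the set $[z_0]$ equals $\{w=0\}$, which may have nonempty interior, so forcing a two-dimensional piece into $[z_0]$ gives no contradiction. And $\sigma=0$ is precisely the case used afterwards, in Lemma~\ref{stratification} and Theorem~\ref{thm:counterexample}.

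Your fallback is the right idea; the paper runs it globally, first proving that every two-ended leaf is vertical. But the decisive checks are missing. Let $z=(a,t)$ with $t>0$ and $w(z)>0$, and suppose $[z]=[z,z']$ is two-ended. There are three cases for $z'$.
\begin{enumerate}[(i)]
\item If $z'\in\{x_1=a+1\}$, the two supporting lines are parallel and Lemma~\ref{lower-bound-length} gives nothing. Instead, combine $Dw(z)=Dw(z')$ with \cite[Proposition~2.3]{MRZ20252} at both endpoints: this gives $\partial_1 w\le a$ and $\partial_1 w\ge a+1$, a contradiction.
\item If $z'$ lies on the lower graph, then $[z]$ crosses the $x_1$-axis at an interior point $p$. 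There $w(p)>0$, because the supporting affine function is positive at $z$ and nonnegative along the leaf. The mirror leaf $[(a,-t)]$ also passes through $p$. By $C^1$ regularity both leaves lie in $[p]$, which is then two-dimensional; this is impossible when $w(p)>0$.
\item If $z'$ lies on the upper graph, concavity of $g_{\epsilon,a}$ shows that the tangent line at $z'$ meets $\{x_1=a\}$ at a point $q=(a,q_2)$ with $q_2\ge\epsilon$. Hence $T=\operatorname{co}\{z,z',q\}$ meets $\{x_1=a\}$ only in $\{a\}\times[t,q_2]$, which misses $z_0$. Since $w(z_0)=0$, this contradicts Lemma~\ref{lower-bound-length}.
\end{enumerate}
Without (i) and without locating the apex in (iii), the appeal to the triangle lemma is not yet an argument. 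With (i)--(iii) in place, Step~3 closes for every $\sigma$, and the lemma follows from your Steps~1--2.
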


\begin{proof}
It suffices to prove
\begin{equation}\label{left-side-in-leaf}
\partial\Omega_{\epsilon,a}\cap\{x_1=a\}\subset [z_0].
\end{equation}
Once \eqref{left-side-in-leaf} holds, the identities
\[
w(z)=w(z_0)+Dw(z_0)\cdot(z-z_0)=0
\qquad\text{and}\qquad
Dw(z)=Dw(z_0)=\sigma e_1
\]
follow from the definition of $[z_0]$ and the $C^1$-regularity of $w$.

\smallskip
\noindent\emph{Step 1: two-ended leaves are vertical.}
Suppose $[x]=[x',x'']$ is a segment whose two endpoints lie on $\partial\Omega_{\epsilon,a}$.
Since $w$ is $C^1$, we have $Dw(x')=Dw(x'')$.
If $x'$ lies on the left vertical side $\{x_1=a\}$ and $x''$ lies on the right vertical side $\{x_1=a+1\}$, then
$\nu_{\Omega_{\epsilon,a}}(x')=(-1,0)$ and $\nu_{\Omega_{\epsilon,a}}(x'')=(1,0)$, and
\cite[Proposition~2.3]{MRZ20252} yields
\begin{align*}
&(Dw(x')-x')\cdot(-1,0)\ge0 \ \Longrightarrow\ Dw_1(x')\le a,\\
&(Dw(x'')-x'')\cdot(1,0)\ge0 \ \Longrightarrow\ Dw_1(x'')\ge a+1,
\end{align*}
a contradiction.
Thus a two-ended leaf cannot connect the two vertical sides.

Moreover, note that $\Omega_1^0$ is symmetric with respect to $x_1$ and  that distinct leaves do not cross (cf.\ \cite{MRZ20252}), then these leaves are either parallel to $e_2$-direction, or they do not cross the $x_1$-axis. 
However, by Lemma~\ref{lower-bound-length}, the triangle associated with $[x]$ (as described there) must contain $z_0$. Therefore, the only possibility is that two-ended leaves connect the top and bottom graphs at the same value of $x_1$.

By symmetry of $\Omega_{\epsilon,a}$ about $\{x_2=0\}$, such a leaf must be parallel to the $x_2$-axis.
In particular, for any $x\in \Omega_{1}\setminus\Omega'_{1}$ one has
\begin{equation}\label{vertical-leaf}
[x]=\{x_1\}\cap \Omega_{\epsilon,a}.
\end{equation}

\smallskip
\noindent\emph{Step 2: the left side must lie in $[z_0]$.}
Assume for contradiction that \eqref{left-side-in-leaf} fails, so there exists
$z\in\partial\Omega_{\epsilon,a}\cap\{x_1=a\}$ with $z\notin[z_0]$.
Choose a sequence
\[
z_k\in(\partial\Omega_{\epsilon,a}\cap\{x_1=a\})\setminus[z_0]
\]
converging to an endpoint $\bar z$ of the segment $[z_0]\cap\{x_1=a\}$.
In particular, $\bar z\in[z_0]$.
For each $k$, either $[z_k]\cap\partial\Omega_{\epsilon,a}=\{z_k\}$ or $[z_k]$ contains a boundary segment through $z_k$.
In the first case \eqref{distortion-good} applies, while in the second case \eqref{distortion-zero} applies.
Thus in both cases,
\[
\Bigl|\bigl(Dw(z_k)-z_k\bigr)\cdot \nu_{\Omega_{\epsilon,a}}(z_k)\Bigr|\le 10C_1.
\]
Passing to the limit and using $w\in C^1(\overline{\Omega_{\epsilon,a}})$ gives
\begin{equation}\label{distortionup}
\Bigl|\bigl(Dw(\bar z)-\bar z\bigr)\cdot \nu_{\Omega_{\epsilon,a}}(\bar z)\Bigr|\le 10C_1.
\end{equation}
However, since $\bar z\in[z_0]$ and $\bar z_1=a$, we have $Dw(\bar z)=Dw(z_0)=\sigma e_1$ and
$\nu_{\Omega_{\epsilon,a}}(\bar z)=(-1,0)$, hence
\[
\Bigl|\bigl(Dw(\bar z)-\bar z\bigr)\cdot \nu_{\Omega_{\epsilon,a}}(\bar z)\Bigr|
=
(\sigma-a)(-1)
=a-\sigma
>\frac{99}{100}a
>10C_1,
\]
contradicting \eqref{distortionup} since $a\ge 30C_1$ and $\sigma<a/100$.
This proves \eqref{left-side-in-leaf}.
\end{proof}

%------------------------------------------------------------
\subsection{A one-dimensional region near the left flat side}

\begin{lem}\label{stratification}
Let $C_1$ be as in \eqref{distortion-good}, let $a\ge 30C_1$, let $0\le \sigma<a/100$, and let $w$ be the minimizer of $L[\,\cdot\,;\Omega_{\epsilon,a}]$.
Set $z_0=(a,0)$, and suppose that
\[
w(z_0)=0,
\qquad
Dw(z_0)=\sigma e_1,
\]
and
\begin{equation}\label{not-completely-zero}
\{w=0\}\cap\partial\Omega_{\epsilon,a}
\subset \{x_1\le a+1-\eta_\epsilon\}.
\end{equation}
Then there exists $\delta_\sigma>0$ (possibly depending on $\epsilon$) such that
\begin{equation}\label{piece1d}
x\in \overline{\Omega_{0}}\cup \overline{\Omega_{1}\setminus \Omega'_{1}}
\quad \text{whenever } x=(x_1,x_2)\in \partial  {\Omega_{\epsilon,a}}
\ \text{and}\ a< x_1\le a+\delta_\sigma.
\end{equation}
In particular, if $\sigma=0$, then there exists $0\le \delta<\delta_\sigma$ such that
\begin{equation}\label{pieceflat}
w(x)=0
\quad \text{whenever } x\in \overline{\Omega_{\epsilon,a}}\ \text{and}\ a\le x_1\le a+\delta,
\end{equation}
\begin{equation}\label{piecelinear}
x\in \Omega_{1}\setminus \Omega'_{1}
\quad \text{whenever } x\in \overline{\Omega_{\epsilon,a}}\ \text{and}\ a+\delta< x_1\le a+\delta_\sigma.
\end{equation}
If $\sigma>0$, then $\Omega_{0}=\emptyset$.
\end{lem}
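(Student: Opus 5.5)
We argue by contradiction for \eqref{piece1d}, exploiting the large boundary distortion on the left side. By Lemma~\ref{left1}, the whole left flat side $\partial\Omega_{\epsilon,a}\cap\{x_1=a\}$ lies in $[z_0]$, with $w=0$ and $Dw=\sigma e_1$ there. In particular, at the two corner points $(a,\pm\epsilon)$ the tangent direction of $\partial\Omega_{\epsilon,a}$ is vertical, by the construction of $g$ (infinite slope at $0$). Hence the outer normal $\nu(x)$ at boundary points $x=(x_1,\pm g_{\epsilon,a}(x_1))$ with $x_1\downarrow a$ converges to $(-1,0)$. By Theorem~\ref{thm:C1}, $Dw$ is continuous up to the boundary, so along such points
\[
\bigl(Dw(x)-x\bigr)\cdot\nu(x)\to (\sigma-a)(-1)=a-\sigma>\tfrac{99}{100}a>10C_1 .
\]
Choose $\delta_\sigma>0$ so that this quantity exceeds $10C_1$ for $a<x_1\le a+\delta_\sigma$.

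Now take a boundary point $x$ in this range and suppose $x\notin\overline{\Omega_0}\cup\overline{\Omega_1\setminus\Omega_1'}$. Then $w(x)>0$, and near $x$ we only have points of $\Omega_1'$, $\Omega_2$, or boundary points whose contact set is either $\{x\}$ or contains a boundary segment. Here we use the non-closure assumption to get a neighbourhood avoiding two-ended leaves. If $[x]\cap\partial\Omega_{\epsilon,a}=\{x\}$, then \eqref{distortion-good} bounds the distortion by $5C_1$. If $[x]$ contains a boundary segment, \eqref{distortion-zero} gives $0$. Strictly convex points need care: I would approximate $x$ by boundary points at which one of these two alternatives holds, or else invoke \cite[Proposition~5.4]{MRZ20252} directly, since the $C^{1,1}$ bound \eqref{c11good} holds on $\Omega_1'\cup\Omega_2$. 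Either way this contradicts the lower bound above.

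Hypothesis \eqref{not-completely-zero} is needed to ensure that two-ended leaves, which are vertical by Step~1 of Lemma~\ref{left1}, exist near the left side rather than $\{w=0\}$ swallowing everything. It also ensures that the boundary-segment alternative cannot occur on the right side. I expect this ruling-out of the alternatives, especially the $\Omega_2$ case at boundary points, to be the main obstacle.

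For $\sigma=0$: the set $\Omega_0$ is convex, symmetric, and contains the left side. By \eqref{piece1d} and vertical-leaf structure \eqref{vertical-leaf}, every vertical chord $\{x_1=t\}\cap\Omega_{\epsilon,a}$ with $a<t\le a+\delta_\sigma$ is either a two-ended leaf or lies in $\overline{\Omega_0}$. Convexity of $\Omega_0$ then shows that the set of $t$ whose chord is in $\Omega_0$ is an interval $[a,a+\delta]$; this gives \eqref{pieceflat} and \eqref{piecelinear}.

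For $\sigma>0$: suppose $w(y)=0$ at some $y$. Then $Dw(y)=0$ wherever $y$ is an interior minimum. Convexity along the segment from $z_0$ to $y$, combined with $w(z_0)=0$ and $\partial_{x_1}w(z_0)=\sigma>0$, forces $y_1<a$ or $w>0$ there. The former is impossible, and a point with $w(y)=0$ and $y_1>a$ gives $w(y)\ge\sigma(y_1-a)>0$ by the supporting plane at $z_0$. Hence $\{w=0\}\subset\{x_1=a\}$, which has empty interior, so $\Omega_0=\emptyset$.
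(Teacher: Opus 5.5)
Your argument for \eqref{piece1d} is the paper's Step~1 in direct rather than sequential form: $(Dw-x)\cdot\nu$ is continuous up to the corners $(a,\pm\epsilon)$, where $\nu\to(-1,0)$, and is then compared with the bounds $5C_1$ or $0$ at ``bad'' boundary points. The strictly convex case needs no extra care: if $[x]=\{x\}$ then $[x]\cap\partial\Omega_{\epsilon,a}=\{x\}$, so \eqref{distortion-good} applies as stated. Your $\sigma>0$ argument is correct and a bit more direct than the paper's, since the supporting plane at $z_0$ alone gives $w\ge\sigma(x_1-a)$ on $\overline{\Omega_{\epsilon,a}}$.

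The gap is in the case $\sigma=0$. You never use hypothesis \eqref{not-completely-zero}, and without it you only obtain $\delta\le\delta_\sigma$. Your $\delta_\sigma$ comes from a continuity argument localized at the left side, so nothing you have shown prevents $\{w=0\}$ from filling the whole slab $\{a\le x_1\le a+\delta_\sigma\}$. In that case \eqref{piecelinear} is vacuous, and the strict inequality $\delta<\delta_\sigma$ is not proved. That inequality says that two-ended leaves really do appear immediately to the right of the flat region, which is what the later one-dimensional ODE analysis uses.

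The missing ingredient is the paper's Step~2, which proves $\Omega_1\setminus\Omega_1'\neq\emptyset$: the distortion is also large along $\overline{\Omega_0}\cap\partial\Omega_{\epsilon,a}$. At such a point $z$, the zero set contains the trapezoid spanned by the left side, $z$ and its reflection. Hence $Dw(z)=0$ and $(Dw(z)-z)\cdot\nu(z)=-z\cdot\nu(z)$. By \eqref{not-completely-zero}, $z_1-a\le 1-\eta_\epsilon$, where $g'\ge1$, so $-z\cdot\nu(z)\ge a/\sqrt2$ up to a bounded error, which exceeds $10C_1$. Therefore bad boundary points cannot accumulate at the right edge $x_1=a+\delta$ of the flat region either. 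Your continuity argument then extends past that edge, and you can take $\delta_\sigma>\delta$.

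A smaller point concerns passing from \eqref{piece1d}, which is a statement about boundary points, to your chord dichotomy. A chord at a limit of leaf abscissae carries an affine supporting function, so it is either a leaf or, since $w$ is even in $x_2$, entirely zero; you should say this. Also, convexity of $\{w=0\}$ does not by itself put the whole chord at $t'<t$ into $\{w=0\}$: the trapezoid misses part of it because $g$ is concave. It only gives $w(t',0)=0$, and the dichotomy then upgrades this to the full chord.
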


\begin{proof}
By Lemma~\ref{left1},
\begin{equation}\label{z0-line}
\partial\Omega_{\epsilon,a}\cap\{x_1=a\}\subset [z_0].
\end{equation}
In particular, for every $z\in\partial\Omega_{\epsilon,a}\cap\{x_1=a\}$ we have $w(z)=0$ and $Dw(z)=\sigma e_1$, hence
\begin{equation}\label{left-distortion}
\bigl(Dw(z)-z\bigr)\cdot \nu_{\Omega_{\epsilon,a}}(z)=a-\sigma
\qquad\text{for all }z\in\partial\Omega_{\epsilon,a}\cap\{x_1=a\},
\end{equation}
since $\nu_{\Omega_{\epsilon,a}}=(-1,0)$ along the left flat side.

\smallskip
\noindent\emph{Step 1: separation from the good region.}
Assume for contradiction that there exists a sequence of boundary points
\[
z_k\in \partial\Omega_{\epsilon,a}\setminus\Bigl(\overline{\Omega_{0}}\cup\overline{\Omega_{1}\setminus\Omega'_{1}}\Bigr)
\quad\text{with}\quad \dist(z_k,\partial\Omega_{\epsilon,a}\cap\{x_1=a\})\to0.
\]
By \eqref{z0-line}, we may assume $z_k\to \bar z\in \partial\Omega_{\epsilon,a}\cap\{x_1=a\}$.
Since $z_k$ lies outside $\overline{\Omega_{0}}\cup\overline{\Omega_{1}\setminus\Omega'_{1}}$, after discarding finitely many terms we may assume that $w(z_k)>0$ and that either $z_k$ lies in the good region or $[z_k]$ contains a boundary segment through $z_k$.
Hence either \eqref{distortion-good} or \eqref{distortion-zero} applies, and therefore
\[
\Bigl|\bigl(Dw(z_k)-z_k\bigr)\cdot \nu_{\Omega_{\epsilon,a}}(z_k)\Bigr|\le 10C_1.
\]
Letting $k\to\infty$ and using $w\in C^1(\overline{\Omega_{\epsilon,a}})$ yields
\[
\Bigl|\bigl(Dw(\bar z)-\bar z\bigr)\cdot \nu_{\Omega_{\epsilon,a}}(\bar z)\Bigr|\le 10C_1,
\]
which contradicts \eqref{left-distortion} because $a\ge 30C_1$ and $\sigma<a/100$.
Therefore,
\[
\dist\!\left(\partial\Omega_{\epsilon,a}\cap\{x_1=a\},\,
\partial\Omega_{\epsilon,a}\setminus\Bigl(\overline{\Omega_{0}}\cup\overline{\Omega_{1}\setminus\Omega'_{1})}\Bigr)\right)>0.
\]
Choosing $\delta_\sigma>0$ smaller than this distance proves \eqref{piece1d}.

\smallskip
\noindent\emph{Step 2: the structure when $\sigma=0$.}
When $\sigma=0$, it follows by Lemma~\ref{left1} that $w=0$ and $Dw=0$ on the entire left flat side.

We claim that $\Omega_{1}\setminus\Omega'_{1}\neq\emptyset$.
Indeed, suppose by contradiction that $\Omega_{1}\setminus\Omega'_{1}=\emptyset$.
Then \eqref{not-completely-zero} implies that $
\partial\Omega_{\epsilon,a}\setminus \overline{\Omega_{0}}\neq\emptyset$ and 
we can choose a sequence
\[
\partial\Omega_{\epsilon,a}\setminus\Bigl(\overline{\Omega_{0}}\cup\overline{\Omega_{1}\setminus\Omega'_{1}}\Bigr)
\ni
z_k \to z \in \overline{\Omega_0}\cap \partial\Omega_{\epsilon,a}
\subset \{x_1\le a+1-\eta_\epsilon\}.
\]
Therefore,
\[
\bigl(Dw(z)-z\bigr)\cdot \nu_{\Omega_{\epsilon,a}}(z)
=
-\,z\cdot \nu_{\Omega_{\epsilon,a}}(z)
\ge \frac{a}{\sqrt2} \ge 15C_1,
\]
which contradicts the bound
\[
\bigl|\bigl(Dw(z)-z\bigr)\cdot \nu_{\Omega_{\epsilon,a}}(z)\bigr|\le 10C_1
\]
(cp. Step 1). This proves the claim.

Define
\[
a_1:=\inf\{x_1:\ (x_1,0)\in \Omega_{1}\setminus \Omega'_{1}\},
\qquad \delta:=a_1-a\ge 0.
\]
By \eqref{not-completely-zero} we must have $a_1\le a+1-\eta_\epsilon$ (otherwise we again produce a sequence as in Step~1).
By definition of $a_1$ and \eqref{piece1d}, every point with $a\le x_1<a_1$ belongs to $\Omega_{0}$, hence $w=0$ there,
which gives \eqref{pieceflat}. Similarly, for $a_1<x_1\le a+\delta_\sigma$ the points must lie in
$\Omega_{1}\setminus\Omega'_{1}$, giving \eqref{piecelinear}.

\smallskip
\noindent\emph{Step 3: the case $\sigma>0$.}
If $\sigma>0$, then by Lemma~\ref{left1} we have $w=0$ and $Dw=\sigma e_1$ on the left flat side.
For any $z$ on this side and any $t>0$ small with $z+t e_1\in\Omega_{\epsilon,a}$, convexity gives
\[
w(z+t e_1)\ge w(z)+Dw(z)\cdot(t e_1)=\sigma t>0.
\]
Thus $w>0$ in a full strip adjacent to $\{x_1=a\}$.
Since $\{w=0\}$ is convex and contains the left flat side, this rules out any other zero point in $\Omega_{\epsilon,a}$,
hence $\{w=0\}$ has empty interior and therefore $\Omega_{0}=\emptyset$.
\end{proof}

%------------------------------------------------------------
\subsection{One-dimensional reduction and blow-up of $v''$ when $\delta=0$}

From now on, assume that Lemma~\ref{stratification} applies with $\sigma=0$, and denote the corresponding minimizer by $u$
(i.e., $u$ is the $w$ above for this choice of parameters). Let $\delta$ and $\delta_\sigma$ be as in
\eqref{pieceflat}--\eqref{piecelinear}. Then on the slab $a<x_1<a+\delta_\sigma$, every point belongs to
$\Omega_{0}$ or $\Omega_{1}\setminus\Omega'_{1}$. In particular, by \eqref{vertical-leaf}, $u$ is constant along vertical
segments in the $x_2$-direction on $\{a+\delta<x_1<a+\delta_\sigma\}$, hence $u$ is one-dimensional there:
there exists $\delta_0\in(0,\delta_\sigma]$ and a convex function $v$ such that
\begin{equation}\label{u-to-v}
u(x_1,x_2)=v(x_1)\qquad\text{for }(x_1,x_2)\in \Omega_{\epsilon,a}\ \text{with }a<x_1<a+\delta_0,
\end{equation}
where $v(a)=0$, $v'(a)=0$, and
\[
v(x_1)=0 \ \text{for }a\le x_1\le a+\delta,
\qquad
v(x_1)>0 \ \text{for }a+\delta<x_1\le a+\delta_0.
\]
Define
\[
G(x_1):=\int_a^{x_1} g(t-a)\,dt,
\qquad
h(x_1):=\epsilon+g(x_1-a).
\]
Then
\[
G(a)=0,
\qquad
G'(x_1)=g(x_1-a).
\]
Because $u$ is even in $x_2$, it is convenient to restrict to the upper half-domain
\[
\Omega_{\epsilon,a}^+:=\Omega_{\epsilon,a}\cap\{x_2\ge0\}
\]
(this only changes the energy by an irrelevant factor $2$).
Then, on the region $\Omega_{\epsilon,a}^+\cap\{a+\delta<x_1<a+\delta_0\}$, the energy of $u$ reduces to
\begin{equation}\label{reduced-energy}
\int_{a+\delta}^{a+\delta_0}
\left[
\frac{1}{2} h(x_1)\,(v'(x_1)-x_1)^2
+\frac{1}{6} h(x_1)^3
+v(x_1)\,h(x_1)
\right]\,dx_1,
\end{equation}
and the corresponding Euler--Lagrange equation is
\begin{equation}\label{EL-v}
-\bigl[(v'(x_1)-x_1)h(x_1)\bigr]'+h(x_1)=0
\qquad\text{for }a+\delta<x_1<a+\delta_0.
\end{equation}
Integrating once, we obtain
\begin{equation}\label{vprime}
(v'(x_1)-x_1)\,h(x_1)=G(x_1)+\epsilon x_1 + C_\delta
\end{equation}
for some constant $C_\delta\in\mathbb R$, hence
\begin{equation}\label{vsecond-general}
v''(x_1)
=2-\frac{g'(x_1-a)\bigl[G(x_1)+\epsilon x_1+C_\delta\bigr]}{\bigl[g(x_1-a)+\epsilon\bigr]^2}.
\end{equation}
Since $v\equiv 0$ on $[a,a+\delta]$ and $v$ is $C^1$, we have the matching condition
\[
v'(a+\delta)=0.
\]
Using \eqref{vprime} at $x_1=a+\delta$ gives
\begin{equation}\label{Cdelta}
C_\delta
=-(a+\delta)\bigl(2\epsilon+g(\delta)\bigr)-G(a+\delta).
\end{equation}
Substituting \eqref{Cdelta} into \eqref{vsecond-general} yields
\begin{equation}\label{vsecond}
v''(x_1)
=
2+\frac{-G(x_1)+G(a+\delta)-\epsilon x_1+(a+\delta)\bigl(2\epsilon+g(\delta)\bigr)}{\bigl[g(x_1-a)+\epsilon\bigr]^2}\,
g'(x_1-a).
\end{equation}
In the special case $\delta=0$, we have $g(\delta)=g(0)=0$ and $G(a+\delta)=G(a)=0$, hence
\begin{equation}\label{c11blowup}
v''(x_1)
=
2+\frac{-G(x_1)+\epsilon(2a-x_1)}{\bigl[g(x_1-a)+\epsilon\bigr]^2}\,g'(x_1-a)
\to +\infty
\qquad\text{as }x_1\to a^+,
\end{equation}
because $g'(x_1-a)\to+\infty$ as $x_1\to a^+$ and the numerator stays positive for $x_1$ sufficiently close to $a$.
Therefore, if for some choice of parameters one has $\delta=0$ in \eqref{pieceflat}, then necessarily
\[
\|D^2u\|_{L^\infty(\Omega_{\epsilon,a})}=+\infty.
\]
Actually, when $\delta=0$, one can say more. By \eqref{vprime} and \eqref{Cdelta},
\[
v'(x_1)
=
x_1+\frac{\int_0^{x_1-a} g(t)\,dt+\epsilon x_1-2a\epsilon}{g(x_1-a)+\epsilon}
=
\frac{g(x_1-a)\,x_1+\int_0^{x_1-a} g(t)\,dt+2\epsilon(x_1-a)}{g(x_1-a)+\epsilon}.
\]
In particular, if  we define
\[
N(y):=(a+y)g(y)+\int_0^y g(t)\,dt+2\epsilon y,
\qquad 0\le y<\delta_0,
\]
then
\[
v'(a+y)=\frac{N(y)}{g(y)+\epsilon},
\qquad
v'(a)=0.
\]
Since $g(0)=0$, it follows that
\begin{equation}
\label{eq:v'g}
|v'(a+y)-v'(a)|=\frac{|N(y)|}{g(y)+\epsilon} =\frac{a}{\epsilon}g(y)+O_{a,\epsilon}(y).
\end{equation}
Thus, for any modulus of continuity $\omega$, if 
$$\frac{g(r)}{\omega(r)}\to +\infty\quad \text{ as} \quad r\to 0,$$ 
then $v'$ also fails to have the same modulus of continuity.

%------------------------------------------------------------
\subsection{Positivity of the slope at the left boundary for large $a$}

We start with the following auxiliary lemma. 

\begin{lem}\label{no 1d zero}
Let $w$ be a minimizer of $L[\cdot;\,\Omega_{\ez,\,a}]$ and set $z_0=(a,\,0)$. Then  $\{w=0\}$ cannot be a subset of the $x_1$-axis. 
\end{lem}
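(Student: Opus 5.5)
We argue by contradiction: suppose $Z:=\{w=0\}\subset\{x_2=0\}$. Since $Z$ is convex, it is then a (possibly degenerate) segment $[(b_1,0),(b_2,0)]$ with $a\le b_1\le b_2\le a+1$. By Armstrong's perturbation argument \cite{A1996}, $Z$ is not a singleton, so $b_1<b_2$ and $Z$ is a genuine horizontal segment of zero area. Because $Z$ has interior points lying in $\Omega_{\epsilon,a}$ (we may pick $x\in Z$ with $b_1<x_1<b_2$, and such a point is interior since $|x_2|=0<g_{\epsilon,a}(x_1)$), $w$ attains its minimum there. Hence $Dw=0$ on $Z\cap\Omega_{\epsilon,a}$, and therefore $Z\subset[x]$ for such $x$, with supporting plane identically zero. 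The symmetry $w(x_1,x_2)=w(x_1,-x_2)$ then gives $\partial_{x_2}w=0$ on the axis. Everything hinges on producing a competitor that lowers the energy, exploiting that a zero set of measure zero sitting well inside the domain is too thin.

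\textbf{Step 1 (truncation, as in Lemma~\ref{left01}).} For small $h>0$ we set $w_h:=(w-h)_+$ and $S_h:=\{w<h\}$. The computation of Lemma~\ref{left01} gives
\[
0\le \int_{S_h}x\cdot Dw\,dx-h\,|\Omega_{\epsilon,a}\setminus S_h|
\le h\int_{\partial S_h}(x\cdot\nu_{S_h})_+\,d\mathscr H^1-2\int_{S_h}w\,dx-h\,|\Omega_{\epsilon,a}\setminus S_h| .
\]
Since $Z\subset\{x_2=0\}$ has zero area, $S_h$ shrinks to $Z$ in the Hausdorff sense. The boundary $\partial S_h$ therefore consists of two nearly horizontal arcs above and below $Z$ (whose normals are close to $\pm e_2$), together with possibly a piece of $\partial\Omega_{\epsilon,a}$ near $(a,0)$ if $b_1=a$. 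On the horizontal arcs, $(x\cdot\nu)_+\approx |x_2|\to0$. On a piece of the left flat side, $x\cdot\nu=-a<0$, so it contributes nothing. The caps near the endpoints of $Z$ have vanishing length. The main danger is an endpoint at $b_2$ where the caps have normal close to $e_1$; their length tends to $0$ with $h$, however, so their contribution is $o(h)$. Dividing by $h$ and letting $h\to0$, we obtain $0\le o(1)-|\Omega_{\epsilon,a}|$, which is a contradiction since $|\Omega_{\epsilon,a}|\ge 3/4$ by \eqref{diam-bound}.

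\textbf{Main obstacle.} The key difficulty is controlling the geometry of $\partial S_h$: we must show that $\mathscr H^1$ of the nonhorizontal part of $\partial S_h$, and the size of $|x_2|$ on the horizontal part, both go to zero. For this we would use $w\in C^1(\overline{\Omega_{\epsilon,a}})$ (Theorem~\ref{thm:C1}) together with convexity. The sublevel sets $S_h$ are convex and nested, and they decrease to $Z$. Convex sets converging in the Hausdorff sense to a segment have perimeters converging to twice its length, and the length of the portion of $\partial S_h$ whose normal is not near $\pm e_2$ tends to $0$ (by convergence of the support functions). If this turns out delicate, a fallback is to use the weaker bound $(x\cdot\nu)_+\le |x_2|+ (a+1)|\nu_1|$ and integrate: $\int_{\partial S_h}|\nu_1|\,d\mathscr H^1$ equals twice the vertical width of $S_h$, which tends to $0$. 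This yields the same $o(h)$ bound directly.
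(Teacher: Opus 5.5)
Your argument is correct and is essentially the paper's. You use the same Armstrong-type competitor $(w-h)_+$, and the energy gain $h\,|\Omega_{\epsilon,a}\setminus S_h|$ beats an $o(h)$ loss because $S_h$ collapses onto the axis. Your preliminary facts (Armstrong non-singleton, $Dw=0$ on $Z$, symmetry) are never used; the truncation alone also handles $Z$ a point or $Z=\emptyset$.

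The only difference is bookkeeping. The paper integrates by parts against $(h-w)_+$ via \eqref{var-id} with $p\equiv h$. That function vanishes on $\partial S_h\cap\Omega_{\epsilon,a}$, so the level curve drops out and only $|S_h|\to0$ and $\mathscr H^1(\partial S_h\cap\partial\Omega_{\epsilon,a})\to0$ are needed. Integrating against $w$ instead forces you to control $\int_{\partial S_h}(x\cdot\nu)_+\,d\mathscr H^1$ on the whole level curve. Your fallback handles this correctly: the bound $(x\cdot\nu)_+\le|x_2|+(a+1)|\nu_1|$, together with $\int_{\partial S_h}|\nu_1|\,d\mathscr H^1=2\,\mathrm{width}_{x_2}(S_h)$ for the convex set $S_h$, does the job. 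The heuristic ``$(x\cdot\nu)_+\approx|x_2|$'' alone would not, since it drops the term $x_1\nu_1$ with $x_1\approx a$, which can be large.
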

\begin{proof}
We prove the lemma by a contradiction argument. 

Suppose that
$$[z_0]\subset\{x_2=0\}.$$
We exclude this by a truncation argument \`a la Armstrong (cp. \cite{A1996}). For \(\delta>0\), define
\[
S_\delta:=\{ w<\delta\},
\qquad
v_\delta:=\max\{w,\delta\},
\qquad
\bar w_\delta:=v_\delta-\delta.
\]
Note that
$$
L[\bar w_\delta;\Omega_{\epsilon,a}]
=
L[v_\delta;\Omega_{\epsilon,a}]
-\delta\,|\Omega_{\epsilon,a}|.
$$
Then, 
by the minimality of \(w\),
applying the variational identity \eqref{var-id} with the affine function \(p\equiv\delta\), we get
\begin{align*}
0&\leq L[v_\delta;\Omega_{\epsilon,a}]-L[w;\Omega_{\epsilon,a}]=L[v_\delta;\Omega_{\epsilon,a}]-L[w;\Omega_{\epsilon,a}]-\delta\,|\Omega_{\epsilon,a}|\\
&=
-\frac12\int_{S_\delta}|Dw|^2\,dx
+\int_{\partial S_\delta\cap\partial\Omega_{\epsilon,a}}
(\delta-w)\,(-x\cdot\nu_{\Omega_{\epsilon,a}})\,d\mathscr H^1 
+3\int_{S_\delta}(\delta-w)\,dx-\delta\,|\Omega_{\epsilon,a}|,
\end{align*}
and therefore
\begin{equation}\label{eq:min delta}
    0\le
C\,\delta\,
\mathscr H^1(\partial S_\delta\cap\partial\Omega_{\epsilon,a})+3\delta\,|S_\delta|
-\delta\,|\Omega_{\epsilon,a}|.
\end{equation}
Since \(\{w=0\}\cap\Omega_{\epsilon,a}\) is a segment contained inside ${x_2=0}$,
it follows that
$$\partial S_\delta\cap\partial\Omega_{\epsilon,a}
\subset
\{x\in\partial\Omega_{\epsilon,a}:\ w(x)\le\delta\}\subset \partial\Omega_{\epsilon,a}\cap \{ |x_2|\leq o_\delta(1)\},
$$
hence
\[
|S_\delta|\to0\quad \text{and}\quad \mathscr H^1(\partial S_\delta\cap\partial\Omega_{\epsilon,a})\to0
\qquad\text{as }\delta\to 0.
\]
Combined with  \eqref{eq:min delta} this proves that
$$
0\le
o(\delta)-\delta\,|\Omega_{\epsilon,a}|,
$$
a contradiction for $\delta \ll 1$.
Thus the claim of the lemma follows. 
\end{proof}

\begin{lem}\label{pslope}
Let $w$ be the minimizer of $L[\,\cdot\,;\Omega_{\epsilon,a}]$, and set $z_0=(a,0)$.
For any fixed $\epsilon\in(0,1)$, we have
\[
Dw(z_0)\cdot e_1>0,
\]
provided that $a$ is sufficiently large.
\end{lem}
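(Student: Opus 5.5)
The plan is to argue by contradiction, after first pinning down the form of $Dw(z_0)$. By Lemma~\ref{left0}, $w(z_0)=0$ once $a\ge A$. Since $w\ge 0$ on the left flat side $\{x_1=a\}\cap\partial\Omega_{\epsilon,a}$ and $w$ is even in $x_2$, the tangential derivative vanishes: $\partial_{x_2}w(z_0)=0$, using $w\in C^1(\overline{\Omega_{\epsilon,a}})$ from Theorem~\ref{thm:C1}. Since $w(z_0+te_1)\ge 0=w(z_0)$, we also get $\partial_{x_1}w(z_0)\ge0$. Hence $Dw(z_0)=\sigma e_1$ with $\sigma\ge0$, and it remains to exclude $\sigma=0$ for $a$ large.

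Suppose $\sigma=0$. Lemma~\ref{left1} applies (take $a\ge 30C_1$), so the whole left flat side lies in $[z_0]$, with $w=0$ and $Dw=0$ there. The supporting plane at $z_0$ is then $0$, so $\{w=0\}=[z_0]$, and it contains the vertical segment $\{a\}\times[-\epsilon,\epsilon]$. This is consistent with Lemma~\ref{no 1d zero}, so the contradiction must come from the size of $a$. By \eqref{left-side-in-leaf}--\eqref{left-distortion}, the distortion along the left side is
\[
(Dw-x)\cdot\nu_{\Omega_{\epsilon,a}}=a \qquad\text{on }\{x_1=a\}\cap\partial\Omega_{\epsilon,a}.
\]
I will then split into two cases: either $\{w=0\}$ reaches the right part of the boundary (violating \eqref{not-completely-zero}), or Lemma~\ref{stratification} gives the one-dimensional structure \eqref{pieceflat}--\eqref{piecelinear} near the left side.

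In the second case I would use the explicit reduction \eqref{u-to-v}--\eqref{vsecond}. On $a+\delta<x_1<a+\delta_0$, the leaves are vertical and $v'$ is given by \eqref{vprime} with $C_\delta$ as in \eqref{Cdelta}. The main step is an energy comparison against a competitor that removes the zero set from the left side and replaces it by a small positive slope. Concretely, I would test with $w_t:=\max\{w,\,t(x_1-a)\}$, and alternatively with $(w-t(x_1-a))_+$, and compute the first variation via \eqref{var-id} with $p=t(x_1-a)$.

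The boundary contribution comes from the portion of the curved boundary near the corner $x_1=a$. There the outer normal is nearly $-e_1$, because $g'\to+\infty$, so $-x\cdot\nu\approx a$. I then compare this against the bulk terms $-\tfrac12\int|Dw|^2+3\int(p-w)$, which are controlled by the one-dimensional formula for $v'$ and by \eqref{c11good}. The aim is to show that for $a$ large (with $\epsilon$ fixed) the flux term of order $a\epsilon t$ strictly dominates, so that $L[w_t]<L[w]$ for small $t>0$, contradicting minimality. The first case, where the zero set reaches the right part of the boundary, I would rule out by the distortion bound \eqref{distortion-good} and the argument of Step~2 of Lemma~\ref{stratification}, since $-z\cdot\nu\gtrsim a$ there too.

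The hard part is getting the sign of this first variation right. The two natural Armstrong-type truncations, as in Lemma~\ref{left01} and Lemma~\ref{no 1d zero}, only give lower bounds on the flux, for instance $a\ge 3/(8\ell)$, and these are compatible with large $a$. So the competitor must genuinely exploit the left-side distortion $a$ together with the vertical-leaf structure. If the direct variation is inconclusive, I would instead integrate \eqref{EL-v} across the whole slab and use $v'(a+\delta)=0$ together with the boundary inequality $(Dw-x)\cdot\nu\ge0$ from \cite[Proposition~2.3]{MRZ20252} at the top and bottom curves. The goal in that route is to show that $C_\delta$ in \eqref{Cdelta}, which has size $\sim -2a\epsilon$, is incompatible with $v'\ge 0$ and the boundary condition for $a$ large.
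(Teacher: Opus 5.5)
Your reduction to $Dw(z_0)=0$ matches the paper. After that, however, the main step has a genuine gap.

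\textbf{The local competitors have the wrong sign.} In \eqref{var-id} with $p=t(x_1-a)$, the boundary term is $\int_{\partial S\cap\partial\Omega_{\epsilon,a}}(p-w)\,(Dp-x)\cdot\nu$. Near the corner $(Dp-x)\cdot\nu\approx a>0$ and $p-w\ge0$. So this flux term, like $3\int_S(p-w)$, \emph{raises} $L[\max\{w,p\}]$, and the only negative contribution is the Dirichlet term. The flux of size $\sim a$ therefore cannot give $L[w_t]<L[w]$. Your other candidate, $(w-t(x_1-a))_+$, lowers $w$, which is the wrong direction.

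\textbf{Nothing local can detect large $a$.} Nothing confined to the slab $\{a<x_1<a+\delta_0\}$ sees that $a$ is large. For $\delta=0$, \eqref{vprime}--\eqref{Cdelta} give
\[
v'(a+y)=\frac{(a+y)g(y)+\int_0^y g+2\epsilon y}{g(y)+\epsilon}\ge 0 .
\]
The boundary inequality $(Dw-x)\cdot\nu\ge0$ on the upper graph reads $\bigl(\epsilon(a-y)-\int_0^y g\bigr)g'(y)\ge (g(y)+\epsilon)^2$, which only becomes easier to satisfy as $a$ grows. So your fallback via \eqref{EL-v} and $C_\delta$ cannot produce a contradiction either. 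Indeed, Theorem~\ref{thm:counterexample} realizes exactly this configuration as an actual minimizer for some $a\ge 1/(8\epsilon)$.

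\textbf{The missing idea is a global competitor.} Take $\hat w:=w+(x_1-a)$, which is admissible because $x_1\ge a$. Then
\[
L[w;\Omega_{\epsilon,a}]-L[\hat w;\Omega_{\epsilon,a}]=\int_{\Omega_{\epsilon,a}}\Bigl((x-Dw)\cdot e_1-\tfrac12-(x_1-a)\Bigr)\,dx .
\]
Making this positive requires two global inputs that your plan does not provide.
\begin{enumerate}
\item[(i)] $w$ must be independent of $x_2$ on all of $\{x_1\le a+1-\eta_\epsilon\}$. If the vertical-leaf region ended earlier, \eqref{vprime}--\eqref{Cdelta} would give distortion $\gtrsim\epsilon a$ at its last leaf, against the bound $10C_1$ coming from \eqref{distortion-good}. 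With this, $(x-Dw)\cdot e_1\ge c\,\epsilon a$ on that region.
\item[(ii)] One needs $\|Dw\|_{L^\infty}\le 3a$. The paper gets this from a Hessian bound of order $\|Dw\|_{L^\infty}+a$ on the thin strip $\{x_1>a+1-\eta_\epsilon\}$. Two-ended leaves with $g'<0$ are excluded by Lemma~\ref{lower-bound-length}, and the remaining ones are handled by the tilted section. The choice $\eta_\epsilon=\epsilon/(1000C_0)$ is precisely what closes this estimate.
\end{enumerate}

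\textbf{The other case is not handled.} Your dismissal of the case where $\{w=0\}$ reaches $\{x_1>a+1-\eta_\epsilon\}$ is not justified. There the normal rotates towards $+e_1$, so $-z\cdot\nu$ need not be large at the point where the zero set leaves $\partial\Omega_{\epsilon,a}$. The paper does not exclude this case. It shows $w\equiv0$ on $\{x_1\le a+1-\eta_\epsilon\}$ and runs the same global competitor, now with $(x-Dw)\cdot e_1=x_1\ge a$ there.
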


\begin{proof}
Assume by contradiction that there exists a sequence $a_k\to\infty$ such that, letting $w_k$ denote the minimizer of
$L[\,\cdot\,;\Omega_{\epsilon,a_k}]$ and $z_{0,k}:=(a_k,0)$, we have
\[
Dw_k(z_{0,k})\cdot e_1\le0.
\]
By Lemma~\ref{left0}, for $k$ large we have $w_k(z_{0,k})=0$.
Since $w_k\ge0$ in $\Omega_{\epsilon,a_k}$ and $w_k$ is $C^1$, the one-sided derivative in the inward direction $e_1$ satisfies
\[
Dw_k(z_{0,k})\cdot e_1
=\lim_{h\to0^+}\frac{w_k(a_k+h,0)-w_k(a_k,0)}{h}
=\lim_{h\to0^+}\frac{w_k(a_k+h,0)}{h}\ge0.
\]
Hence $Dw_k(z_{0,k})\cdot e_1=0$, and by symmetry $Dw_k(z_{0,k})=0$.

Fix such a large $k$ and write $a:=a_k$, $w:=w_k$, and $z_0:=z_{0,k}$.
We now show that this leads to a competitor with strictly lower energy for $a$ large.
There are two cases.

\smallskip
\noindent\emph{Case 1:} we first consider the case when
\begin{equation}\label{zero-left}
\{w=0\}\cap\partial\Omega_{\epsilon,a}\subset\{x_1\le a+1-\eta_\epsilon\}.
\end{equation}
Then Lemma~\ref{stratification} applies with $\sigma=0$, and $w$ has the one-dimensional form \eqref{u-to-v}
(on a maximal slab $\{a<x_1<a+\delta_0\}$ with $\delta_0>0$).
If $\delta_0<1-\eta_\epsilon$, then since $w$ is a function depends only on $x_1$ according to Lemma~\ref{stratification}, 
the explicit formula \eqref{vprime}--\eqref{Cdelta} gives
\[
(x-Dw)\cdot e_1 \ge c\,\epsilon a
\]
at the terminal point $\{x\in \Omega\colon x_1=a+\delta_0\}$ of the one-dimensional region, while the universal distortion estimate
\eqref{distortion-good} bounds the same quantity by $10C_1$.
For $a$ large this is impossible.
Consequently, for $a$ sufficiently large,
\begin{equation}\label{u-1d-left}
w\ \text{is independent of $x_2$ in}\ \ \Omega_{\epsilon,a}\cap\{x_1\le a+1-\eta_\epsilon\}.
\end{equation}

\smallskip
\noindent\emph{Case 2:} we now consider the case when \eqref{zero-left} fails.
Then there exists
\begin{equation}\label{assump large zero}
x'\in\{w=0\}\cap\partial\Omega_{\epsilon,a}\cap\{x_1>a+1-\eta_\epsilon\}.
\end{equation}
By symmetry
\[
x''=(x'_1,-x'_2)\in\{w=0\},
\]
and by convexity of $\{w=0\}$ we obtain that $w$ vanishes on $[x',x'']$.
Using again Lemma~\ref{lower-bound-length} and the foliation property of the leaves in $\Omega_{1}$ (see \cite{MRZ20252}), one concludes that no two-ended leaf can cross this segment. In particular, this implies
\begin{equation}\label{w 0}
    w=0 \quad \text{throughout } \Omega_{\epsilon,a}\cap\{x_1\le a+1-\eta_\epsilon\}.
\end{equation}
Indeed, suppose by contradiction that
\[
(\Omega_2\cup\Omega'_1)\cap \Omega_{\epsilon,a}\cap\{x_1\le a+1-\eta_\epsilon\}\neq \emptyset.
\]
Then there exists
\[
z\in \partial\Omega_{\epsilon,a}\cap \overline{\Omega_2\cup\Omega'_1}\cap \{w=0\}\cap\{x_1\le a+1-\eta_\epsilon\}.
\]
Thus, since $\{w=0\}$ is a $2$-dimensional convex set (by Lemma~\ref{no 1d zero} and the assumption \eqref{assump large zero}) and $w\in C^1(\overline{\Omega})$, one has $Dw(z)=0$.
Also, applying \eqref{distortion-good} and \eqref{distortion-zero} to a sequence $z_k\in \partial\Omega_{\epsilon,a}\cap \overline{\Omega_2\cup\Omega'_1}\backslash\{w=0\}$ converging to $z$, we have 
\[
|(Dw(z)-z)\cdot \nu_{\Omega_{\epsilon,a}}(z)|\le 10 C_1.
\]
This yields a contradiction when $a\ge 30C_1$, and proves \eqref{w 0}.
In particular, \eqref{u-1d-left} holds trivially.

\smallskip
In either case, \eqref{u-1d-left} holds.
We next record a global gradient bound.
On $\{x_1\le a+1-\eta_\epsilon\}$, $w$ is one-dimensional, so
\[
Dw(x)=v'(x_1)e_1.
\]
Moreover, on this region the boundary condition
\[
(Dw-x)\cdot\nu_{\Omega_{\epsilon,a}}\ge0
\qquad\text{on }\partial\Omega_{\epsilon,a}
\]
implies $v'(x_1)\le x_1$.
In particular,
\[
Dw\cdot e_1=v'(x_1)\le x_1\le a+1
\qquad\text{on }\Omega_{\epsilon,a}\cap\{x_1\le a+1-\eta_\epsilon\}.
\]

\smallskip
\noindent\emph{Hessian bound on the right strip $\{x_1\ge a+1-\eta_\epsilon\}$.}
Let $x=(x_1,x_2)\in \Omega_{\epsilon,a}$ with $x_1\ge a+1-\eta_\epsilon$.

\smallskip
\noindent\emph{Case A: $x\in\Omega_{0}$.}
Then $w$ is affine in a neighborhood of $x$, hence $D^2w(x)=0$.

\smallskip
\noindent\emph{Case B: $x\in \Omega'_{1}\cup\Omega_{2}$.}
By \eqref{c11good} we have $\|D^2w(x)\|\le C_0$.

\smallskip
\noindent\emph{Case C: $x\in (\Omega_{1}\setminus\Omega'_{1})\cap\{x_1\ge a+1-\eta_\epsilon\}$.}
Then $[x]$ is a segment with two endpoints on $\partial\Omega_{\epsilon,a}$.
By \eqref{vertical-leaf}, this leaf is vertical:
\[
[x]=\{x_1\}\cap\Omega_{\epsilon,a},
\]
and its endpoints are
\[
x^\pm=(x_1,\pm g_{\epsilon,a}(x_1))\in\partial\Omega_{\epsilon,a}.
\]
Set
\[
s:=x_1-a\in[1-\eta_\epsilon,1),
\qquad
\tau:=e_2.
\]
We distinguish two subcases.

\smallskip
\noindent\emph{Subcase C1: $g'(s)<0$.}
Let $P^\pm$ be the tangent lines to the upper and lower graphs at $x^\pm$.
Since
\[
P^+:\ y_2=g_{\epsilon,a}(x_1)+g'(s)(y_1-x_1),
\qquad
P^-:\ y_2=-g_{\epsilon,a}(x_1)-g'(s)(y_1-x_1),
\]
their intersection has first coordinate
\[
y_1=x_1-\frac{g_{\epsilon,a}(x_1)}{g'(s)}>x_1.
\]
Hence the triangular region enclosed by $[x]$, $P^+$, and $P^-$ is contained in
$\{y_1\ge x_1\}$.
Since $z_0=(a,0)\in\{w=0\}$ and $x_1>a$, this contradicts
Lemma~\ref{lower-bound-length}. Therefore this subcase cannot occur.

\smallskip
\noindent\emph{Subcase C2: $g'(s)\ge0$.}
Since $g$ is concave and $g'(t)=1$ on $(\frac12,1-\eta_\epsilon)$, we have
\[
0\le g'(s)\le 1.
\]
Hence the outward unit normals $\nu^\pm$ to $\partial\Omega_{\epsilon,a}$ at $x^\pm$ satisfy
\[
|\tau\cdot \nu^\pm|
=\frac{1}{\sqrt{1+g'(s)^2}}
\ge \frac{1}{\sqrt2}.
\]
Moreover,
\[
|x^+-x^-|=2g_{\epsilon,a}(x_1)
\ge 2g_{\epsilon,a}(a+1-\eta_\epsilon)
\ge c(\epsilon)>0.
\]
% Thus one endpoint, say $x^+$, is a good endpoint with uniform length and angle constants.

Let
\[
\ell_x(y):=w(x)+Dw(x)\cdot(y-x),
\qquad
h_r:=\sup_{B_r(x)}(w-\ell_x),
\]
% If the untilted section already meets $\partial\Omega_{\epsilon,a}$ only near $x^+$,
% we work with the untilted construction; otherwise 
and we tilt with respect to the 
endpoint $x^-$ for which
$$|x-x^-|\le |x-x^+|$$
as in Lemma~\ref{lem:local-perturb}. Then for all sufficiently
small $r$ we obtain a section $S_r$ such that
\[
\frac{h_r}{r^2}
\le
C\Bigl(\|Dw\|_{L^\infty(\Omega_{\epsilon,a})}
+\sup_{\Omega_{\epsilon,a}}|y|\Bigr)
\left(
1+\frac{\mathscr H^1(\partial S_r\cap\partial\Omega_{\epsilon,a})}{|S_r|}
\right).
\]
Furthermore, by \cite[Proof of Theorem~4.1]{MRZ20252}, for $r$ small the boundary trace
\[
A_r:=\partial S_r\cap\partial\Omega_{\epsilon,a}
\]
is contained in a fixed boundary chart around $x^+$, and  
$x$ satisfies
\[
|x-x^+|\ge c(\epsilon).
\]
Let $P^+$ be the tangent line to the upper graph at $x^+$.
Since $|\tau\cdot \nu^+|\ge 1/\sqrt2$, we get
\[
\dist(x,P^+)\ge c(\epsilon).
\]
If $A_r\neq\emptyset$, then by convexity
\[
\operatorname{co}(A_r\cup\{x\})\subset \overline S_r.
\]
Also, because $A_r$ lies in a boundary chart with uniformly bounded slope, its length is
comparable to the length of its orthogonal projection onto $P^+$.
Therefore
\[
|S_r|
\ge \bigl|\operatorname{co}(A_r\cup\{x\})\bigr|
\ge c(\epsilon)\,\mathscr H^1(A_r).
\]
Hence
\[
\mathscr H^1(\partial S_r\cap\partial\Omega_{\epsilon,a})
\le C(\epsilon)\,|S_r|,
\]
and therefore
\begin{equation}\label{tau-r2-est}
h_r
\le
C\bigl(\|Dw\|_{L^\infty(\Omega_{\epsilon,a})}+a+3\bigr)\,r^2.
\end{equation}
This implies that
\[
\|D^2w(x)\|
\le C\bigl(\|Dw\|_{L^\infty(\Omega_{\epsilon,a})}+a+3\bigr)
\]
at every point of second differentiability.

\smallskip
\noindent\emph{Case D: $x\in \partial\{w=0\}\cap\Omega_{\epsilon,a}\cap\{x_1\ge a+1-\eta_\epsilon\}$.}
In this case, since $w(x)=0$, then $x\notin \Omega_{1}$. In particular,  $x$ belongs to the closure of a connected
component $U$ of
\[
\Omega_{\epsilon,a}\setminus(\Omega_{1}\setminus\Omega'_1).
\]
Applying Proposition~\ref{prop:goodC11}, we get
\[
\|D^2w\|_{L^\infty(\overline U\cap\Omega_{\epsilon,a})}\le C_0,
\]
therefore $D^2w$ is bounded at $x$.

\smallskip
Combining Cases A--D, we conclude that
\[
\|D^2w\|_{L^\infty(\Omega_{\epsilon,a}\cap\{x_1\ge a+1-\eta_\epsilon\})}
\le C_0\bigl(\|Dw\|_{L^\infty(\Omega_{\epsilon,a})}+a+3\bigr).
\]
Now, using this bound and connecting any point in $\{x_1>a+1-\eta_\epsilon\}$ to a point in $\{x_1\le a+1-\eta_\epsilon\}$ by a segment of length
$\le 2\eta_\epsilon$, we deduce (for $a$ large)
\[
\|Dw\|_{L^\infty(\Omega_{\epsilon,a})}
\le C_0\bigl(\|Dw\|_{L^\infty(\Omega_{\epsilon,a})}+a+3\bigr)\eta_\epsilon + (a+1).
\]
Since $\eta_\epsilon=\frac{1}{1000C_0}\epsilon\le \frac{1}{1000C_0}$, this yields
\begin{equation}\label{Du-bound}
\|Dw\|_{L^\infty(\Omega_{\epsilon,a})}\le 3a
\qquad\text{for $a$ sufficiently large}.
\end{equation}
Finally, define the admissible competitor
\[
\hat w(x):=w(x)+\varphi(x),
\qquad \varphi(x):=x_1-a.
\]
Since $x_1\ge a$ on $\Omega_{\epsilon,a}$, we have $\hat w\ge w\ge0$ and $\hat w$ is convex, hence $\hat w\in\mathcal K(\Omega_{\epsilon,a})$.
A direct computation gives
\begin{align*}
L[w;\Omega_{\epsilon,a}]-L[\hat w;\Omega_{\epsilon,a}]
&=
\int_{\Omega_{\epsilon,a}}
\left(
- D\varphi \cdot (Dw-x)
- \frac{|D\varphi|^2}{2}
- \varphi
\right)\,dx\\
&=
\int_{\Omega_{\epsilon,a}}
\Bigl((x-Dw)\cdot e_1 -\frac12-(x_1-a)\Bigr)\,dx.
\end{align*}
On $\{x_1\le a+1-\eta_\epsilon\}$, the one-dimensional structure \eqref{u-1d-left} and the explicit formula
\eqref{vprime}--\eqref{Cdelta} imply
\[
(x-Dw)\cdot e_1\ge c\,\epsilon a
\]
for $a$ large, while on the complement strip
$\{x_1>a+1-\eta_\epsilon\}$ we have the crude bound
\[
\bigl|(x-Dw)\cdot e_1\bigr|\le 5a
\]
by \eqref{Du-bound}.
Since the strip $\{x_1>a+1-\eta_\epsilon\}$ has area $O(\eta_\epsilon)$ and $|\Omega_{\epsilon,a}|$ is bounded below,
we conclude that
\[
L[w;\Omega_{\epsilon,a}]-L[\hat w;\Omega_{\epsilon,a}]>0
\]
for all sufficiently large $a$, contradicting the minimality of $w$.
Therefore $Dw(z_0)\cdot e_1>0$ for $a$ large.
\end{proof}

%------------------------------------------------------------
\subsection{Existence of a blow-up configuration}

\begin{thm}\label{thm:counterexample}
Let $u$ be the minimizer of $L[\,\cdot\,;\Omega_{\epsilon,a}]$. For any sufficiently small $\epsilon$, there exists $a>0$ such that
$u(z_0)=0$ and $Du(z_0)=0$, and there exists $\delta_0>0$ such that
\[
u(x_1,x_2)=v(x_1)>0
\]
for some one-dimensional convex function $v$ whenever $(x_1,x_2)\in\Omega_{\epsilon,a}$ and $a<x_1<a+\delta_0$.
Moreover, given a modulus of continuity $\omega$, if
\begin{equation}
\label{eq:g omega}\frac{g(r)}{\omega(r)}\to \infty\quad \text{ as} \quad r\to 0,
\end{equation}
then $Du$ also fails to have the same modulus of continuity.
\end{thm}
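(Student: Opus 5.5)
The plan is a continuity argument in the parameter $a$, with $\epsilon$ fixed small. Lemma~\ref{left0} gives $w(z_0)=0$ for every $a\ge A$. By the symmetry of the minimizer in $x_2$, $Dw(z_0)=\sigma(a)e_1$ with $\sigma(a)\ge 0$.

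\textbf{Step 1: both signs of $\sigma$ occur.}
\begin{itemize}
\item For $a$ large, Lemma~\ref{pslope} gives $\sigma(a)>0$.
\item At the smallest admissible values $a\in[A',A'+1]$, where $A'=\max\{A,30C_1\}$, I aim to show $\sigma(a)=0$. Here we need $\epsilon$ small: we want $\frac1{8\epsilon}>A'+1$.
\item If $\sigma(a)>0$ at such an $a$, then Lemma~\ref{stratification} gives $\Omega_0=\emptyset$, so $w>0$ in the interior.
\item If moreover $\sigma(a)$ could be pushed to $0$ while keeping $w>0$ inside, Lemma~\ref{left01} would give $a>1/(8\epsilon)$, which is impossible.
\end{itemize}
So the plan is to define
\[
a_*:=\inf\{a\ge A':\ \sigma(a')>0 \text{ for all } a'\in(a,\infty)\}.
\]
I will use the stability of minimizers under Hausdorff perturbation of the domains: $\Omega_{\epsilon,a}$ is a translate of $\Omega_{\epsilon,0}$, and the functional transforms by $x\mapsto x+ae_1$. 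Together with the uniform $C^1(\overline\Omega)$ control coming from Theorem~\ref{thm:C1} and the local arguments, this should give continuity of $a\mapsto\sigma(a)$. Hence $\sigma(a_*)=0$.

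\textbf{Step 2: the minimizer at $a_*$ is positive in the interior.} Write $u$ for the minimizer at $a=a_*$. I need $u>0$ in the interior, equivalently $\delta=0$ in \eqref{pieceflat}.
\begin{itemize}
\item Suppose $\{u=0\}$ had positive measure. Because $\sigma(a)>0$ for $a>a_*$, Lemma~\ref{stratification} gives $\Omega_0=\emptyset$ for those $a$.
\item The zero sets converge as $a\downarrow a_*$: the positive-measure zero set at $a_*$ would be a limit of zero sets that are at most segments on the left side.
\item I would make this rigorous using nondegeneracy. By Armstrong-type truncation (as in Lemma~\ref{no 1d zero}), a two-dimensional zero set is stable under small perturbation of $a$, because $u$ grows quadratically off a thick zero set. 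This would contradict $\Omega_0=\emptyset$ for $a$ slightly above $a_*$.
\item Consequently $u>0$ inside, Lemma~\ref{left01} forces $a_*>1/(8\epsilon)$, and this is consistent with taking $\epsilon$ small.
\end{itemize}
To verify condition \eqref{not-completely-zero} at $a_*$, I would argue as in Case~2 of Lemma~\ref{pslope}. If the zero set reached $\{x_1>a+1-\eta_\epsilon\}$, it would be two-dimensional by Lemma~\ref{no 1d zero}. That contradicts the interior positivity just established, or, arguing directly, the distortion bound yields a contradiction for $a\ge 30C_1$.

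\textbf{Step 3: conclusion.} With $\sigma=0$, $\delta=0$ and \eqref{not-completely-zero} in hand, Lemma~\ref{stratification} and \eqref{u-to-v} give $u(x_1,x_2)=v(x_1)>0$ on $\{a<x_1<a+\delta_0\}$. The explicit formula \eqref{eq:v'g} then gives
\[
|v'(a+y)|\ge \frac{a}{\epsilon}\,g(y)-O(y).
\]
Under \eqref{eq:g omega} this shows $|Du(a+y,0)-Du(z_0)|/\omega(y)\to\infty$, so $Du$ fails to have modulus $\omega$. Theorem~\ref{C11fails} follows by choosing $g$ concave with $g(r)/\omega(r)\to\infty$, for instance $g(r)\ge\sqrt{\omega(r)}$ near $0$ with $\omega$ replaced by a concave majorant; this is compatible with \eqref{g1}.

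The main obstacle is Step~1/Step~2: the continuity of $\sigma(a)$ and the exclusion of a thick zero set at the threshold $a_*$. Both require quantitative stability of minimizers and of their boundary gradients under variation of $a$. I would first try to get this from the uniform estimates already available, namely Lemma~\ref{lem:leaf-lower-bound}, Proposition~\ref{prop:goodC11} and the strip estimates, which are uniform in $a$ on compact parameter ranges.
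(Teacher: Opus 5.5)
There is a genuine gap, located where you suspected: nothing in your argument produces a parameter with $\sigma(a)=0$ and $u>0$ in the interior.

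The second bullet of Step~1 is circular. Lemma~\ref{left01} assumes $Dw(z_0)=0$, so it says nothing about a minimizer with $\sigma(a)>0$. ``Pushing $\sigma$ to $0$ while keeping $w>0$ inside'' is exactly what has to be proved.

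The continuity route does not fill this. Uniform convergence of the translated minimizers plus convexity (bound $\sigma(a')$ by difference quotients $W_{a'}(z_0+he_1)/h$) gives only upper semicontinuity, $\limsup_{a'\to a}\sigma(a')\le\sigma(a)$. Concluding $\sigma(a_*)=0$ needs the reverse, lower, semicontinuity along parameters where $\sigma$ vanishes. That amounts to equicontinuity of $Dw_a$ at $z_0$ across the family. Theorem~\ref{thm:C1} is purely qualitative, and the statement you are proving shows the modulus of $Du$ at $z_0$ can be arbitrarily poor, so no such uniform control is available. Also, if $a_*=A'$, your definition only yields $\sigma(A')\ge0$. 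The ``stability of thick zero sets'' in Step~2 is likewise unsupported: it would need a quantitative nondegeneracy of $w_a$ off its zero set, which the available tools do not give.

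The missing idea is an exact shear covariance, which replaces the whole continuity argument.
\begin{itemize}
\item By Lemma~\ref{pslope}, pick $a_0$ large with $\alpha_0:=Du_{a_0}(a_0,0)\cdot e_1>0$. Put $a:=a_0-\alpha_0$ and $\tilde u(x):=u_{a_0}(x+\alpha_0e_1)-\alpha_0(x_1-a)$.
\item Since $D\tilde u(x)-x=Du_{a_0}(y)-y$ with $y=x+\alpha_0e_1$, the energy changes only by an additive constant.
\item The map $v\mapsto v(\cdot-\alpha_0e_1)+\alpha_0(y_1-a_0)$ sends $\mathcal K(\Omega_{\epsilon,a})$ bijectively onto $\{w\in\mathcal K(\Omega_{\epsilon,a_0}):\ w\ge\alpha_0(y_1-a_0)\}$. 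This subset contains $u_{a_0}$, because $\alpha_0(y_1-a_0)$ is its supporting plane at $(a_0,0)$.
\item Hence $\tilde u$ is the minimizer on $\Omega_{\epsilon,a}$, with $\tilde u(z_0)=0$ and $D\tilde u(z_0)=0$.
\end{itemize}
The same computation with any shift $\beta\in[0,\alpha_0]$ gives $\sigma(a_0-\beta)=\alpha_0-\beta$, which is all the ``continuity in $a$'' one needs.

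Interior positivity then follows directly. The zero set of $\tilde u$ is the translate of the contact set of $u_{a_0}$ with its supporting plane at $(a_0,0)$. On that set $u_{a_0}=\alpha_0(y_1-a_0)>0$ off the left side, so it is Lebesgue-null by the trichotomy $\Omega_0\cup\Omega_1\cup\Omega_2$ (cf.\ \cite[Theorem~1(i)]{MRZ20252}). Evenness in $x_2$ then forces any interior zero onto the $x_1$-axis, which Lemma~\ref{no 1d zero} rules out.

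So $\tilde u>0$ inside. Lemma~\ref{left01} is then used not for a contradiction but to guarantee $a>1/(8\epsilon)\ge30C_1$, once $\epsilon<1/(240C_1)$. Lemmas~\ref{left1} and~\ref{stratification} then apply with $\sigma=\delta=0$, and your Step~3 goes through as written.
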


\begin{proof}
Fix $\epsilon<\frac{1}{240C_1}$.
By Lemma~\ref{pslope}, choose $a_0\ge 30C_1$ sufficiently large such that the minimizer $u_{a_0}$ of
$L[\,\cdot\,;\Omega_{\epsilon,a_0}]$ satisfies
\[
Du_{a_0}(a_0,0)\cdot e_1=\alpha_0>0.
\]
Since $\Omega_{\epsilon,a}$ is obtained from $\Omega_{\epsilon,0}$ by translation in the $e_1$-direction, and the functional
depends on $(Dw-x)$ in a translation-covariant way (cf.\ \cite{MRZ20252}), one checks that if $w$ is a minimizer on
$\Omega_{\epsilon,a}$, then for each $\beta>0$ the function
\[
x\mapsto w(x+\beta e_1)-\beta x_1
\]
is a minimizer on $\Omega_{\epsilon,a-\beta}$ up to addition of a constant (which does not affect the gradient).

Set
\[
a:=a_0-\alpha_0,
\qquad
\tilde u(x):=u_{a_0}(x+\alpha_0 e_1)-\alpha_0(x_1-a).
\]
Then \(\tilde u\) is a minimizer of \(L[\cdot\,;\Omega_{\epsilon,a}]\), and at
$z_0:=(a,0)$
we have
\[
\tilde u(z_0)=u_{a_0}(a_0,0)=0,
\qquad
D\tilde u(z_0)=Du_{a_0}(a_0,0)-\alpha_0 e_1=0.
\]
Moreover, since
\[
(Du_{a_0}(a_0,0)-a_0e_1)\cdot(-e_1)\ge0,
\]
we have \(\alpha_0\le a_0\), hence \(a\ge0\).

We claim that \(\tilde u>0\) in the interior of \(\Omega_{\epsilon,a}\).
Indeed, since \(u_{a_0}\) is strictly positive in the interior in a neighborhood to the right of the left boundary, \cite[Theorem~1(i)]{MRZ20252} implies that the contact set of \(u_{a_0}\) with its tangent plane at \((a_0,0)\) has zero Lebesgue measure. By translation covariance, the same is true for the contact set \([z_0]\) of \(\tilde u\). 
Now suppose by contradiction that
\[
\{\tilde u=0\}\cap \Omega_{\epsilon,a}\neq\emptyset.
\]
Since \(\tilde u(z_0)=0\) and \(D\tilde u(z_0)=0\), it follows that
$\{\tilde u=0\}\subset [z_0].$
Observe now that, because \(\tilde u\) is even in \(x_2\), if \(y=(y_1,y_2)\in \{\tilde u=0\}\cap\Omega_{\epsilon,a}\) with \(y_2\neq0\), then also
$y^*:=(y_1,-y_2)\in \{\tilde u=0\}.$
By convexity of \([z_0]\), the triangle \(\operatorname{co}(\{z_0,y,y^*\})\) would then be contained in \([z_0]\), contradicting \(|[z_0]|=0\).
This proves that 
\[
\{\tilde u=0\}\cap \Omega_{\epsilon,a}
\]
is a segment contained in the \(x_1\)-axis with endpoint \(z_0\).
However, this is impossible by Lemma~\ref{no 1d zero}.

Thus \(\tilde u>0\) in the interior of \(\Omega_{\epsilon,a}\), so we can apply Lemma~\ref{left01} to \(\tilde u\) to obtain
\[
a=a_0-\alpha_0\ge \frac{1}{8\epsilon}\ge 30C_1.
\]
Replacing \(\tilde u\) by \(u\) from now on, we may therefore assume that
\[
u(z_0)=0,\qquad Du(z_0)=0,\qquad u>0 \ \text{in the interior of } \Omega_{\epsilon,a},
\qquad a\ge 30C_1.
\]
Applying Lemma~\ref{stratification} to $u$ with $\sigma=0$ and using interior positivity near $z_0$ forces the flat
region length $\delta$ in \eqref{pieceflat} to be $\delta=0$ (otherwise $u$ would vanish on interior points).
Thus $u$ is one-dimensional and positive immediately to the right of $a$:
there exists $\delta_0>0$ such that
\[
u(x_1,x_2)=v(x_1)>0\qquad\text{for }a<x_1<a+\delta_0,
\]
for some one-dimensional convex function $v$.
Since $\delta=0$, formula \eqref{c11blowup} yields
\[
v''(x_1)\to+\infty
\qquad\text{as }x_1\to a^+.
\]
In addition, if \eqref{eq:g omega} holds,
then \eqref{eq:v'g} implies that also $Du$ fails to have the same modulus of continuity. 
\end{proof}

\section{Free boundary between $\Omega_1'$ and $\Omega_2$}\label{sec4}

In this section we assume that $\partial \Omega$ is of class $C^{1,1}$ and that $u \in C^{1,1}(\overline\Omega)$.

Recall that $\Omega_1'$ is the union of leaves with one endpoint on $\partial\Omega$
and the other in the interior of $\Omega$. The tamed free boundary $\mathcal{T}$ was
defined in \eqref{tamefree}. Fix $z_0\in\mathcal{T}$ and set
\[
x_0:=[z_0]\cap\partial\Omega.
\]
Parametrize $\partial\Omega$ locally by a $C^{1,1}$ map
\[
\gamma\colon (-\varepsilon,\varepsilon)\to \partial\Omega,
\qquad
\gamma(0)=x_0.
\]
By \cite[Lemma~6.1 and Remark~6.4]{MRZ20252}, the leaves $[\gamma(t)]$ spread out as they
leave the boundary and foliate a neighborhood of $x_0$. In particular, every point $x$
sufficiently close to $z_0$ can be written uniquely as
\[
x=\gamma(t)+r\,\xi(t),
\]
for a unique pair $(t,r)$, where $\xi(t)$ is the unit direction of the leaf $[\gamma(t)]$ and
$R(t)>0$ is its length, so that the leaf terminates at $\gamma(t)+R(t)\xi(t)$.

Moreover, the following Euler--Lagrange equations hold near $\gamma(0)=x_0$
(see \cite{MRZ20252}):
\begin{equation}\label{EL r}
R^2(t)\,|\dot\xi(t)|
=
2\,|\dot\gamma(t)|\,
\bigl(Du(\gamma(t))-\gamma(t)\bigr)\cdot \nu_{\Omega}(\gamma(t)),
\end{equation}
\begin{equation}\label{EL u}
3-\Delta u
=
\frac{3r-2R(t)}
{r+|\dot\xi(t)|^{-1}\,\xi(t)\times\dot\gamma(t)}
=:f(x).
\end{equation}
Here $a\times b$ denotes the scalar cross product in $\mathbb R^2$.
The raywise change of variables
\[
(x_1,x_2)\mapsto (r,t)
\]
is locally bi-Lipschitz by \cite[Lemma~6.3 and Corollary~6.6]{MRZ20252}.

Let $v$ be the minimal convex extension of $u$ restricted to a connected component of
$B_{\varepsilon_0}(x_0)\cap\Omega$, and set
$$
U:=u-v.
$$
Then $U\ge0$ and
\begin{equation}\label{w obstacle}
\Delta U=f(x)\,\chi_{\{U>0\}}\ge c_0\,\chi_{\{U>0\}}
\end{equation}
for some $c_0>0$. Moreover, for some sufficiently small $\rho>0$,
\[
\{U>0\}\cap B_\rho(z_0)=\Omega_2\cap B_\rho(z_0),
\qquad
\{U=0\}\cap B_\rho(z_0)=\Omega_1\cap B_\rho(z_0).
\]
Since $R$ is continuous by \cite[Lemma~7.4]{MRZ20252}, it follows from \eqref{EL r} that
$|\dot\xi|$ is continuous, and then $f$ is continuous by \eqref{EL u}. Hence $U$ solves a
classical obstacle problem with continuous right-hand side, uniformly bounded away from $0$.

Therefore, by Caffarelli's alternative
(see \cite{C1977,C1998} and \cite[Theorem~0.3]{B2001}), for every $x\in\mathcal T$ one has
\[
\text{either }\ \lim_{r\to0}\frac{|B_r(x)\cap\Omega_1|}{|B_r(x)|}=0,
\qquad\text{or}\qquad
\lim_{r\to0}\frac{|B_r(x)\cap\Omega_1|}{|B_r(x)|}=\frac12.
\]

\subsection{Regularity at $\frac12$-density points}

Let $x\in\mathcal T$ be a point with density $\frac12$.
Then the free boundary is a Reifenberg vanishing set in a neighborhood of $x$.
More precisely, there exists $\rho>0$ such that for every compact set
$K\subset B_\rho(x)$,
\[
\lim_{r\to0}\theta_K(r)=0,
\qquad
\theta_K(r)
:=
\sup_{0<s\le r}\,
\sup_{y\in \mathcal T\cap K}
\frac{d_H\bigl(P_{y,s}\cap B_s(y),\,\mathcal T\cap B_s(y)\bigr)}{s},
\]
where $P_{y,s}$ is a line through $y$ and $d_H$ denotes the Hausdorff distance.
Moreover, the set of $\frac12$-density points is relatively open in
$\mathcal T\cap B_\rho(z_0)$.

By Reifenberg's theorem \cite{R1960}
(see also \cite[Section~3]{DKT2001}),
$\mathcal T$ can be locally parametrized by a bi-H\"older homeomorphism.
Since the raywise change of variables is bi-Lipschitz
\cite[Corollary~6.6]{MRZ20252}, the graph representation of $\mathcal T$ in
$(t,r)$-coordinates implies that $R$ is locally H\"older.
In addition, since leaves in $\Omega_1$ do not intersect, the map
\[
t\longmapsto \gamma(t)+R(t)\,\xi(t)
\]
is locally injective.

We now prove uniqueness of blow-ups at a $\frac12$-density point.

\begin{lem}\label{goodpoint}
Let $x\in\mathcal T$ be a point with density $\frac12$. Then the blow-up limit
\[
P_x:=\lim_{r\to0}\frac{\mathcal T-x}{r}
\]
exists and is unique. Moreover, if $P_x$ is not tangent to the leaf $[x]$ at $x$,
then $\mathcal T$ is smooth in a neighborhood of $x$.
\end{lem}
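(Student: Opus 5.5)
The plan is to upgrade the continuity of the right-hand side $f$ in \eqref{w obstacle} to H\"older continuity near $x$, using the Reifenberg information already available at $\frac12$-density points. Once $f$ is H\"older, the classical regularity theory for the obstacle problem at regular points gives a $C^{1,\beta}$ free boundary, and uniqueness of the blow-up follows at once. The transversal case is then handled by a bootstrap in the raywise coordinates $(t,r)$.

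\textbf{Step 1: H\"older regularity of the data.} As noted before the lemma, near a $\frac12$-density point $\mathcal T$ is Reifenberg vanishing. Combined with the bi-Lipschitz change of variables $(x_1,x_2)\mapsto(r,t)$, this shows that $R$ is locally $C^{\alpha}$ for some $\alpha>0$. Next I use \eqref{EL r}:
\[
|\dot\xi(t)|=\frac{2|\dot\gamma(t)|\,(Du(\gamma(t))-\gamma(t))\cdot\nu_\Omega(\gamma(t))}{R(t)^2}.
\]
Since $\gamma\in C^{1,1}$, $\nu_\Omega\circ\gamma$ is Lipschitz, and $Du$ is Lipschitz up to the boundary because $u\in C^{1,1}(\overline\Omega)$, the numerator is Lipschitz in $t$. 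The denominator is $C^\alpha$ and bounded below near $t=0$, so $|\dot\xi|\in C^\alpha$. The same holds for $\xi\times\dot\gamma$, and hence for the quantity $|\dot\xi|^{-1}\xi\times\dot\gamma$ appearing in \eqref{EL u}. Therefore $f$ is $C^\alpha$ as a function of $(t,r)$, with a denominator bounded away from zero near $z_0$. Composing with the bi-Lipschitz inverse of the raywise coordinates, $f\in C^{\alpha}$ in $x$ as well, and it is still bounded below by $c_0>0$.

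\textbf{Step 2: uniqueness of the blow-up.} Now $U$ solves $\Delta U=f\chi_{\{U>0\}}$ with $f$ H\"older and positive. By Caffarelli's theory (more precisely its extension to H\"older right-hand sides, e.g.\ \cite{B2001}), a $\frac12$-density (regular) point has a neighbourhood in which the free boundary $\partial\{U>0\}=\mathcal T$ is a $C^{1,\beta}$ curve. In particular the rescalings $(\mathcal T-x)/r$ converge to the tangent line $P_x$ of this curve, independently of the subsequence. This proves the first assertion. If one prefers not to cite the H\"older version directly, the same conclusion comes from the standard iteration. With $\mathrm{osc}_{B_r}f\le Cr^\alpha$, the direction $e_r$ of the half-plane solution $\frac{f(x)}2((y\cdot e_r)_+)^2$ approximating $U$ at scale $r$ satisfies $|e_r-e_{r/2}|\le Cr^{\alpha'}$. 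This bound is summable over dyadic scales, so $e_r$ converges. This summability is exactly what mere continuity of $f$ fails to provide, and it is why Step 1 is needed.

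\textbf{Step 3: smoothness in the transversal case.} Suppose $P_x$ is not tangent to $[x]$. Since the leaf direction $\xi(t)$ is continuous, the leaves remain uniformly transversal to the $C^{1,\beta}$ curve $\mathcal T$ near $x$. By the implicit function theorem in $(t,r)$-coordinates (the coordinate map $(t,r)\mapsto\gamma(t)+r\xi(t)$ is $C^{1,\alpha}$ with nondegenerate Jacobian), $\mathcal T$ is the graph $r=R(t)$ with $R\in C^{1,\beta'}$. Feeding this back into \eqref{EL r} and \eqref{EL u} improves the regularity of $f$ and of the coordinate map. Higher-order regularity for the obstacle problem (Kinderlehrer--Nirenberg--Spruck type: $f\in C^{k,\beta}$ implies a $C^{k+1,\beta}$ free boundary) then improves $\mathcal T$. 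Iterating gives the stated smoothness, up to the order permitted by the regularity of $\partial\Omega$ and of $Du|_{\partial\Omega}$ entering \eqref{EL r}.

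\textbf{Main obstacle.} The delicate step is Step 1: showing that H\"older continuity of $R$, which comes only from Reifenberg flatness, really transfers to $f$ in $x$-coordinates. This requires $R$ to be bounded below near $t=0$, so that $R^{-2}$ is H\"older. It also requires that $r$, which appears explicitly in \eqref{EL u}, is merely Lipschitz in $x$; the bi-Lipschitz property of the coordinate change \cite[Corollary~6.6]{MRZ20252} is precisely what makes this work. In the tangential case I expect only the unique blow-up, not graph smoothness, which is consistent with the statement.
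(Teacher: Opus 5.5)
\textbf{There is a genuine gap in Step~1, and Steps~2--3 depend on it.} Reifenberg flatness of $\mathcal T$ together with bi-Lipschitz raywise coordinates does not make $R$ H\"older.

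In $(t,r)$-coordinates the leaves are the lines $\{t=\mathrm{const}\}$ and $\mathcal T=\{r=R(t)\}$. A H\"older modulus for $R$ is therefore a quantitative statement that $\mathcal T$ is not too tangent to the leaves, and Reifenberg flatness says nothing about that. For example, the $C^\infty$ curve $t-t_x=\mathrm{sgn}(s)\,e^{-1/|s|}$, with $s=r-R(t_x)$, is Reifenberg vanishing and is a graph over $t$. Yet $R(t)-R(t_x)=\mathrm{sgn}(t-t_x)/\log(1/|t-t_x|)$ is not H\"older for any exponent: the flatter the curve is along the leaf, the worse $R$ becomes. The sentence before the lemma asserting that $R$ is locally H\"older cannot bear this weight in the tangential case, and the paper's proof of the lemma does not use it.

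Concretely, $f$ depends on $y$ essentially through $R(t(y))$. So $\mathrm{osc}_{B_r(x)}f$ is, up to $O(r)$, the oscillation of $R$ over a $t$-interval of length comparable to $r$. When $\mathcal T$ is tangent to $[x]$, the leaves through points of $B_r(x)$ need not end inside $B_{Cr}(x)$, so \eqref{EL r}--\eqref{EL u} give no power-type (or even Dini) modulus for $f$ at $x$. With merely continuous $f$, the obstacle-problem theory yields only Reifenberg flatness at regular points, not uniqueness of blow-ups. Step~2 thus collapses exactly where uniqueness is at stake, and Step~3 inherits the problem because it relies on the $C^{1,\beta}$ curve from Step~2.

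A warning sign: if Step~1 held, Step~2 would make $\mathcal T$ a $C^{1,\beta}$ curve near every $\frac12$-density point. The tangential/transversal analysis, and Lemma~\ref{T-C1}, would then be superfluous.

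\textbf{The missing idea is the dichotomy used in the paper's proof.}

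\emph{Tangential case.} If every subsequential blow-up line is tangent to $[x]$, then all of them coincide with the line through the origin spanned by $\xi(x)$. Uniqueness follows with no regularity of $f$ at all.

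\emph{Transversal case.} Otherwise some $r_k\downarrow0$ gives a limit line with $\nu_P\cdot\xi(x)\ge c_\ast>0$.
\begin{enumerate}
\item At a fixed scale $r_0=r_{k_0}$, $\mathcal T\cap B_{r_0}(x)$ is close to a line transversal to the leaves. Hence leaves through $B_{r_0}(x)$ end within $B_{Cr_0}(x)$, and \eqref{EL r}--\eqref{EL u} give $\mathrm{osc}_{B_{r_0}(x)}f\le C_1 r_0$.
\item Blank's improvement of flatness \cite[Theorem~7.1]{B2001} then produces a new approximating line at scale $r_0/2$ with error $O(r_0^2)$.
\item Since the lines move by summable amounts, transversality persists at all scales $2^{-m}r_0$, and with it the oscillation bound on $f$.
\item The lines therefore converge to a transversal limit $P_\infty$.
\end{enumerate}
Your dyadic ``standard iteration'' in Step~2 is the right mechanism. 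It must, however, be fed by this scale-by-scale, transversality-dependent oscillation bound, not by a global H\"older bound on $f$. Only after this is $R$ Lipschitz and $f$ H\"older near $x$; from that point your Step~3 bootstrap coincides with the paper's.
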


\begin{proof}
Let $I:=[x]$ and denote by $\xi(x)$ the unit direction of $I$.
For any sequence $r_k\downarrow0$, after passing to a subsequence we have
\[
\mathcal T_k:=\frac{\mathcal T-x}{r_k}\to P
\]
locally in Hausdorff distance, where $P$ is a line because $x$ is a $\frac12$-density point.
Let $\nu_P$ be the unit normal to $P$, chosen so that $\nu_P\cdot\xi(x)\ge0$.

If for every such sequence one has $\nu_P\cdot\xi(x)=0$, then $P$ must be the line containing
$I$, and the blow-up is unique.

Assume now that there exists a sequence $r_k\downarrow0$ for which the limiting line $P$
satisfies
\begin{equation}\label{nontangent0}
\nu_P\cdot\xi(x)\ge c_\ast>0.
\end{equation}
Then there exists $k_0$ such that
\begin{equation}\label{pslope1}
\nu_{P_{x,r_k}}\cdot\xi(x)\ge \frac12\,c_\ast
\qquad\text{for all }k\ge k_0,
\end{equation}
where $P_{x,r}$ denotes a line through $x$ realizing the Reifenberg flatness at scale $r$.
Set
\[
r_0:=r_{k_0}.
\]

Because of the non-tangency \eqref{pslope1}, the free boundary can be represented as a graph
of the leaf-length function $R$ over the leaf parameter $t$ in $B_{r_0}(x)$, with Lipschitz
constant depending only on $c_\ast$.
In particular,
\[
|R(y)-R(x)|\le C(c_\ast)\,r_0
\qquad\text{for all }y\in \mathcal T\cap B_{r_0}(x).
\]
Using \eqref{EL r} and \eqref{EL u}, this implies
\begin{equation}\label{iteration0}
|f(y)-f(x)|\le C_1\,r_0
\qquad\text{for all }y\in B_{r_0}(x),
\end{equation}
for some constant $C_1$ depending only on the local geometry of $\Omega$,
$\|\gamma\|_{C^{1,1}}$, $\|u\|_{C^{1,1}}$, and $c_\ast$.

Since $\mathcal T$ is Reifenberg vanishing near $x$, given $\delta_0>0$ we may choose $r_0$
small enough so that
\begin{equation}\label{inismall}
\theta_{\overline B_\rho}(r_0)\le \delta_0.
\end{equation}
Let
\[
P_0:=P_{x,r_0}.
\]
Then
\begin{equation}\label{iteration1}
d_H\bigl(P_0\cap B_{r_0}(x),\,\mathcal T\cap B_{r_0}(x)\bigr)
\le \theta_{\overline B_\rho}(r_0)\,r_0.
\end{equation}

Set $\rho_0:=1/2$.
By \eqref{iteration0} and \eqref{inismall}, we can apply \cite[Theorem~7.1]{B2001} to obtain,
for $r_0$ sufficiently small, a line $P_1$ through $x$ such that
\begin{equation}\label{iteration2}
d_H\bigl(P_1\cap B_{\rho_0 r_0}(x),\,\mathcal T\cap B_{\rho_0 r_0}(x)\bigr)
\le C\,C_1\,r_0^2.
\end{equation}
Combining \eqref{iteration1} and \eqref{iteration2}, and rescaling to unit size, yields
\[
d_H\bigl(P_1\cap B_1(x),\,P_0\cap B_1(x)\bigr)
\le \frac{2C\,C_1r_0+2\theta_{\overline B_\rho}(r_0)}{\rho_0}.
\]

Iterating this argument at scales $\rho_0^m r_0$, we obtain a sequence of lines $P_m$ through $x$
such that
\[
d_H\bigl(P_{m+1}\cap B_{\rho_0^{m+1}r_0}(x),\,\mathcal T\cap B_{\rho_0^{m+1}r_0}(x)\bigr)
\le C\,C_1\,\rho_0^{2m}\,r_0^2,
\]
and
\[
d_H\bigl(P_{m+1}\cap B_1(x),\,P_m\cap B_1(x)\bigr)
\le \frac{2C\,C_1\,\rho_0^{2m-2}\,r_0^2}{\rho_0}.
\]
Hence $P_m$ converges to a limit line $P_\infty$.

Moreover,
\[
d_H\bigl(P_m\cap B_1(x),\,P_\infty\cap B_1(x)\bigr)
\le
\sum_{l=m}^\infty
\frac{2C\,C_1\,\rho_0^{2l-2}\,r_0^2}{\rho_0}
\le
\frac{2C\,C_1\,\rho_0^{2m-3}\,r_0^2}{1-\rho_0^2},
\]
and
\begin{equation}\label{iterationf}
d_H\bigl(P_0\cap B_1(x),\,P_\infty\cap B_1(x)\bigr)
\le
\frac{2C\,C_1r_0+2\theta_{\overline B_\rho}(r_0)}{\rho_0}
+\frac{2C\,C_1\,\rho_0^{-1}\,r_0^2}{1-\rho_0^2}.
\end{equation}
Therefore, given $\varepsilon>0$, choosing $r_0$ sufficiently small gives
\begin{equation}\label{itfinal}
d_H\bigl(P_0\cap B_1(x),\,P_\infty\cap B_1(x)\bigr)\le \varepsilon.
\end{equation}
Taking $\varepsilon\ll c_\ast$ and using \eqref{pslope1}, we infer that
\begin{equation}\label{nontangent}
\nu_{P_\infty}\cdot\xi(x)\ge \frac14\,c_\ast>0.
\end{equation}
Thus, under \eqref{nontangent0}, the blow-up is unique and equals $P_\infty$.

Finally, once \eqref{nontangent} holds, the graph function $R$ is locally Lipschitz, hence in
particular locally H\"older. By \eqref{EL r} and \eqref{EL u}, the coefficient $f$ is then locally
H\"older continuous near $x$. Standard elliptic regularity yields
\[
u\in C^{2,\alpha}(\Omega_2\cap B_\eta(x))
\]
for some $\alpha\in(0,1)$ and $\eta>0$, and the regularity theory for the obstacle problem
implies that $\mathcal T$ is a local $C^{1,\alpha}$ graph near $x$
(see \cite{C1977,C1998,K1976,KN1977,B2001}).
Bootstrapping as in \cite[Proposition~7.6]{MRZ20252} gives local smoothness of $\mathcal T$.
\end{proof}

\begin{lem}\label{T-C1}
The tamed free boundary $\mathcal T$ is $C^1$ at every $\frac12$-density point.
\end{lem}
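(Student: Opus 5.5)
The plan is to upgrade the pointwise uniqueness of blow-ups from Lemma~\ref{goodpoint} to continuity of the tangent line along $\mathcal T$ near a $\frac12$-density point $x$. Since the set of $\frac12$-density points is relatively open in $\mathcal T\cap B_\rho(z_0)$, every $y\in\mathcal T$ close to $x$ is also a $\frac12$-density point. Lemma~\ref{goodpoint} then gives a unique tangent line $P_y$ at each such $y$. Together with the fact that $\mathcal T$ is a Reifenberg-vanishing topological curve near $x$, it suffices to show that $y\mapsto P_y$ is continuous at $x$ along $\mathcal T$. A Reifenberg-flat curve with a continuously varying tangent line is a $C^1$ embedded curve.

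We split into two cases according to Lemma~\ref{goodpoint}. If $P_x$ is not tangent to the leaf $[x]$, that lemma already gives smoothness of $\mathcal T$ near $x$, so there is nothing to prove. It remains to treat the case where $P_x$ is the line containing $[x]$. Here the idea is to make the scale-to-scale estimates uniform in the base point. Fix $\varepsilon>0$ and choose $r_0$ so small that $\theta_{\overline B_\rho}(r_0)\le\varepsilon$. The Reifenberg modulus is uniform on a compact neighborhood, so for every $y\in\mathcal T\cap B_{r_0/4}(x)$ the best lines $P_{y,s}$ vary slowly in $s$ for $s\le r_0$. More precisely, $d_H(P_{y,s}\cap B_1,P_{y,s/2}\cap B_1)\lesssim\theta(s)$.

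The key step is to show that the tangent line $P_y$ is within $O(\varepsilon)$ of $P_{y,r_0}$, uniformly in $y$. This is the main obstacle, because Reifenberg vanishing alone only gives a logarithmic sum $\sum\theta(2^{-m}r_0)$, which need not be small. We will handle it by a dichotomy at each $y$.

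If $P_{y,s}$ stays within a small angle $\eta$ of the leaf direction $\xi(y)$ for all $s\le r_0$, then $P_y$ is also within $\eta$ of $\xi(y)$. Since $\xi$ is continuous, $\xi(y)$ is close to $\xi(x)$, and this direction is also close to $P_{x,r_0}$ and hence to $P_{y,r_0}$.

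If instead at some scale $s_0\le r_0$ the line $P_{y,s_0}$ makes an angle at least $\eta$ with $\xi(y)$, we run the iteration from the proof of Lemma~\ref{goodpoint} starting at scale $s_0$ with $c_\ast=\eta$. The estimate \eqref{iterationf}--\eqref{itfinal} then gives $d_H(P_y\cap B_1,P_{y,s_0}\cap B_1)\le C(\eta)(s_0+\theta(s_0))$, which is small. Between $s_0$ and $r_0$ we still need control on the drift of the best lines. Take $s_0$ to be the largest scale at which non-tangency occurs: above $s_0$ the lines are $\eta$-close to $\xi(y)$, hence to each other and to $P_{y,r_0}$. This gives the uniform bound.

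Combining the two cases, $\angle(P_y,P_x)\le C(\eta+\varepsilon)+o(1)$ as $y\to x$, with $\eta$ and $\varepsilon$ arbitrary. Hence $P_y\to P_x$, and $\mathcal T$ is $C^1$ at $x$.
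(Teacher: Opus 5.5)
Your proposal is correct and is essentially the paper's argument. The paper also reduces to the tangential case and uses the fact that the iteration estimate \eqref{itfinal} from Lemma~\ref{goodpoint}, started at any scale where the best line makes a moderate angle with the leaf (found by an intermediate-scale selection, just like your ``largest non-tangent scale''), pins down the tangent line uniformly in the base point; the only difference is that the paper argues by contradiction along a sequence $x_k\to x_0$ and passes to the limit $P_{x_k,r}\to Q_r$ before letting $r\to0$, whereas you compare $P_y$ directly with $\xi(y)\approx\xi(x)$.
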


\begin{proof}
Fix $x_0\in\mathcal T$ with density $\frac12$.
If
\[
\nu_{P_{x_0}}\cdot\xi(x_0)>0,
\]
then Lemma~\ref{goodpoint} already shows that $\mathcal T$ is smooth in a neighborhood of $x_0$.

It remains to treat the case
\[
\nu_{P_{x_0}}\cdot\xi(x_0)=0.
\]
We claim that whenever $x_k\in\mathcal T$ and $x_k\to x_0$, one has
\[
P_{x_k}\to P_{x_0}.
\]
This gives continuity of the tangent line field, hence $C^1$ regularity at $x_0$.

Suppose the claim fails. Passing to a subsequence, there exists $c_\ast>0$ such that
\begin{equation}\label{gap0}
\nu_{P_{x_k}}\cdot\xi(x_k)\ge 2c_\ast
\qquad\text{for all }k.
\end{equation}

\medskip
\noindent\emph{Claim.}
There exists $r_\ast>0$ such that
\begin{equation}\label{claim1}
\nu_{P_{x_k,r}}\cdot\xi(x_k)\ge c_\ast
\qquad\text{for all }0<r\le r_\ast\ \text{and all }k.
\end{equation}

\noindent
Indeed, if \eqref{claim1} failed, we could find $r_m\downarrow0$ and indices $k_m$ such that
\[
\nu_{P_{x_{k_m},r_m}}\cdot\xi(x_{k_m})<c_\ast.
\]
Since $P_{x_{k_m},r}\to P_{x_{k_m}}$ as $r\to0$ and \eqref{gap0} holds, there exists
$\bar r_m\in(0,r_m)$ such that
\begin{equation}\label{gap1}
c_\ast\le \nu_{P_{x_{k_m},\bar r_m}}\cdot\xi(x_{k_m})<\frac54\,c_\ast.
\end{equation}
Now apply \eqref{itfinal} from the proof of Lemma~\ref{goodpoint} at the point $x_{k_m}$ and
scale $\bar r_m$, with $\varepsilon\ll c_\ast$.
For $m$ large, this gives
\[
d_H\bigl(P_{x_{k_m},\bar r_m}\cap B_1(x_{k_m}),\,P_{x_{k_m}}\cap B_1(x_{k_m})\bigr)\le \varepsilon.
\]
Combining this with \eqref{gap1} yields
\[
\nu_{P_{x_{k_m}}}\cdot\xi(x_{k_m})<\frac32\,c_\ast,
\]
contradicting \eqref{gap0}. This proves the claim.

\medskip
Fix $r\in(0,r_\ast)$. By compactness and Reifenberg flatness, after passing to a subsequence
we may assume
\[
P_{x_k,r}\to Q_r,
\]
where $Q_r$ is a line through $x_0$ approximating $\mathcal T$ in $B_r(x_0)$.
Since $\xi$ varies continuously along the foliation,
passing to the limit in \eqref{claim1} gives
\[
\nu_{Q_r}\cdot\xi(x_0)\ge c_\ast.
\]
Letting $r\to0$ and using the uniqueness of the blow-up at $x_0$
(from Lemma~\ref{goodpoint}) yields
\[
\nu_{P_{x_0}}\cdot\xi(x_0)\ge c_\ast,
\]
contradicting the assumption $\nu_{P_{x_0}}\cdot\xi(x_0)=0$.
Therefore $P_{x_k}\to P_{x_0}$, and $\mathcal T$ is $C^1$ at $x_0$.
\end{proof}

\subsection{Rigidity at $0$-density points}

We now turn to cusp-type blow-ups. The next argument proves the rigidity theorem
stated in the introduction.

\begin{proof}[Proof of Theorem~\ref{Rigi}]
Write
$x=(x_1,x_2)=(r\cos\theta,r\sin\theta).$
Since $u\ge0$ in $B_1$ and $u=0$ on
$S=\{(x_1,0): -1<x_1<0\},$
every point of $S$ is a local minimum. Hence
\begin{equation}\label{eq:grad-on-S}
D u(x_1,0)=0
\qquad\text{for all }-1<x_1\le0.
\end{equation}
Because $u\in C^{1,1}(B_1)$, the gradient is globally Lipschitz: there exists $L\ge1$ such that
\begin{equation}\label{eq:Lip}
|D u(x)-D u(y)|\le L|x-y|
\qquad\forall x,y\in B_1.
\end{equation}
Convexity implies $D^2u\ge0,$
while the identity $\Delta u=1$ in $D$ implies that each first derivative $\partial_i u$
is harmonic in $D$.

\medskip
\noindent\textit{Step 1: a one-sided lower bound for $\partial_1u$.}
Let
\[
y:=\Bigl(-\frac12,0\Bigr)\in S.
\]
Since $u$ is convex, the gradient is monotone:
\[
(D u(x)-D u(y))\cdot (x-y)\ge0.
\]
Using \eqref{eq:grad-on-S}, this gives
\[
\partial_1u(x)\Bigl(x_1+\frac12\Bigr)+\partial_2u(x)\,x_2\ge0.
\]
Thus, whenever $|x_1|\le1/4$,
\[
\partial_1u(x)\ge -4x_2\,\partial_2u(x).
\]
Now let $x\in \partial D\cap B_{1/4}$. Since the sides of $D$ lie in $\{x_1<0\}$, the point
$(x_1,0)$ belongs to $S$, and \eqref{eq:Lip} together with \eqref{eq:grad-on-S} gives
\[
|\partial_2u(x)|
\le |D u(x)-D u(x_1,0)|
\le L|x_2|.
\]
Hence
\[
\partial_1u(x)\ge -4Lx_2^2
\qquad\text{for all }x\in \partial D\cap B_{1/4}.
\]
Set
$C_\ast:=4L.$
Since $u_{11}\ge0$, for each fixed $x_2$ the function
\[
x_1\longmapsto \partial_1u(x_1,x_2)+C_\ast x_2^2
\]
is nondecreasing. Therefore, choosing $r_0=r_0(a)\in(0,1/4]$ so that every horizontal segment
through a point of $D\cap B_{r_0}$ meets $\partial D\cap B_{1/4}$, the previous boundary estimate
propagates into the sector:
\begin{equation}\label{eq:basic-lower}
\partial_1u(x)\ge -C_\ast x_2^2
\qquad\text{for all }x\in D\cap B_{r_0}.
\end{equation}

\medskip
\noindent\textit{Step 2: a harmonic lower barrier for $u_{11}$.}
Fix $a'\in(a,\pi/2)$ and set
\[
D':=\{(r,\theta): -\pi+a'<\theta<\pi-a'\}\subset D,
\qquad
b:=\frac{\pi}{2(\pi-a')}\in\Bigl(\frac12,1\Bigr),
\]
\[
\hat w(r,\theta):=r^b\cos(b\theta).
\]
Then $\hat w$ is harmonic in $D'$, vanishes on $\partial D'$, and is strictly positive in $D'$.
Moreover,
\begin{equation}\label{eq:d1w}
\partial_1\hat w
=
\cos\theta\,\partial_r\hat w-\frac{\sin\theta}{r}\,\partial_\theta\hat w
=
b\,r^{b-1}\cos\bigl((b-1)\theta\bigr),
\qquad
|\partial_1\hat w|\le b\,r^{b-1}.
\end{equation}
Let
$\hat v:=u_{11}.$
Then $\hat v$ is harmonic and nonnegative in $D$.
If $\hat v\equiv0$ in $D$, then $\partial_1u$ is harmonic and independent of $x_1$, hence
\[
\partial_1u(x)=\alpha x_2+\beta
\]
in $D$ for some constants $\alpha,\beta$.
Since $\partial_1u=0$ on $S$, we have $\beta=0$.
The convexity matrix
\[
\begin{pmatrix}
0 & \alpha\\
\alpha & u_{22}
\end{pmatrix}
\]
must be nonnegative semidefinite, so necessarily $\alpha=0$.
Hence $\partial_1u\equiv0$ in $D$.
Since now $\Delta u=u_{22}=1$ in $D$, and $u(0)=|D u(0)|=0$, we conclude that
\[
u(x)=\frac{x_2^2}{2}
\qquad\text{in }D.
\]
So it only remains to exclude the case $\hat v\not\equiv0$.

Assume therefore that $\hat v\not\equiv0$.
By the strong maximum principle, $\hat v>0$ in $D$.
In particular,
\[
m:=\min_{\theta\in[-\pi+a',\,\pi-a']}\hat v(1/2,\theta)>0.
\]
Set
\[
c_0:=m\,2^b,
\qquad
h:=\hat v-c_0\hat w
\quad\text{in }D'\cap B_{1/2}.
\]
Then $h$ is harmonic. On the two sides of $D'$ one has $\hat w=0$, so $h\ge0$ there, while
on the circular arc $\partial B_{1/2}\cap \overline{D'}$,
\[
c_0\hat w\le m\,2^b \Bigl(\frac12\Bigr)^b=m\le \hat v.
\]
Hence $h\ge0$ on $\partial(D'\cap B_{1/2})$, and therefore
\begin{equation}\label{eq:v-lower}
u_{11}(x)=\hat v(x)\ge c_0\hat w(x)
\qquad\text{for all }x\in D'\cap B_{1/2}.
\end{equation}

\medskip
\noindent\textit{Step 3: a nonlinear subharmonic barrier.}
Choose $\gamma\in(1,1/b)$ and define
\[
\delta:=1-b\gamma\in(0,1).
\]
For $\varepsilon>0$, set
\[
\psi_\varepsilon(x):=\varepsilon\,\hat w(x)^\gamma-(C_\ast+1)x_2^2.
\]
Since $\hat w$ is harmonic and positive in $D'$,
\[
\Delta(\hat w^\gamma)
=
\gamma(\gamma-1)\hat w^{\gamma-2}|D \hat w|^2
=
\gamma(\gamma-1)b^2\,r^{b\gamma-2}\cos^{\gamma-2}(b\theta)
\ge \gamma(\gamma-1)b^2\,r^{-(1+\delta)},
\]
because $b\gamma-2=-(1+\delta)$ and $\cos(b\theta)\in(0,1]$ in $D'$.
Since $\Delta(x_2^2)=2$, it follows that
\[
\Delta\psi_\varepsilon
\ge
\varepsilon\,\gamma(\gamma-1)b^2\,r^{-(1+\delta)}-2(C_\ast+1).
\]
Now choose
\begin{equation}\label{eq:r0eps}
\varepsilon:=A\,r_0^{1+\delta},
\qquad
A:=\frac{2(C_\ast+1)}{\gamma(\gamma-1)b^2}.
\end{equation}
Then $\psi_\varepsilon$ is subharmonic in $D'\cap B_{r_0}$.

Set
\[
U:=D'\cap\bigl(\{|x_1|<C_0r_0\}\times \{|x_2|<r_0\}\bigr),
\qquad
C_0:=\cot a'+1,
\]
and define
\[
H:=\partial_1u-\psi_\varepsilon.
\]

\medskip
\noindent\textit{Step 4: $H\ge0$ on $\partial U$.}

\smallskip
\noindent\emph{(a) On the sides of $D'$.}
There $\hat w=0$, so by \eqref{eq:basic-lower},
\[
H=\partial_1u+(C_\ast+1)x_2^2
\ge -C_\ast x_2^2+(C_\ast+1)x_2^2
\ge0.
\]

\smallskip
\noindent\emph{(b) On the top and bottom edges.}
We treat the top edge $\{x_2=r_0\}$, the bottom one being identical.
Set
\[
t_\ell:=-r_0\cot a',
\qquad
t_r:=(\cot a'+1)r_0,
\]
and define
\[
F(t):=H(t,r_0)
\qquad\text{for }t\in[t_\ell,t_r].
\]
Using \eqref{eq:v-lower},
\[
F'(t)
=
u_{11}(t,r_0)-\partial_1\psi_\varepsilon(t,r_0)
\ge c_0\hat w(t,r_0)-\varepsilon\,\partial_t\bigl(\hat w(t,r_0)^\gamma\bigr).
\]
There exist constants $0<c_1\le c_2$, depending only on $a'$, such that
for $t=t_\ell+s$ one has
\begin{equation}\label{eq:lin-growth}
c_1\,r_0^{b-1}s\le \hat w(t,r_0)\le c_2\,r_0^{b-1}s,
\qquad 0\le s\le t_r-t_\ell.
\end{equation}
Also, since $\hat w(t_\ell,r_0)=0$, \eqref{eq:basic-lower} gives
\[
F(t_\ell)=\partial_1u(t_\ell,r_0)+(C_\ast+1)r_0^2\ge r_0^2.
\]
Integrating from $t_\ell$ to $t=t_\ell+s$, and using \eqref{eq:r0eps} and \eqref{eq:lin-growth},
we obtain
\begin{align}
F(t)
&\ge r_0^2 + c_0\int_{t_\ell}^{t}\hat w(\tau,r_0)\,d\tau
-\varepsilon\,\hat w(t,r_0)^\gamma \nonumber\\
&\ge r_0^2+\frac{c_0c_1}{2}\,r_0^{b-1}s^2
-Ac_2^\gamma\,r_0^{1+\delta+\gamma(b-1)}\,s^\gamma \nonumber\\
&= r_0^2+\frac{c_0c_1}{2}\,r_0^{b-1}s^2
-Ac_2^\gamma\,r_0^{2-\gamma}\,s^\gamma. \label{eq:layer-ineq}
\end{align}
Choose $\theta_0>0$ small, depending only on $A,c_2,\gamma$, so that
$F\ge0$ on $[t_\ell,t_\ell+\theta_0r_0]$.
On the complementary interval, the positive term in \eqref{eq:layer-ineq}
is of order $r_0^{1+b}$ while the negative term is of order $r_0^2$;
since $1+b<2$, the positive contribution dominates for $r_0$ sufficiently small.
Hence $F\ge0$ on the whole top edge.

\smallskip
\noindent\emph{(c) On the right side.}
Fix $x_2\in[-r_0,r_0]$ and let
\[
t_\ell(x_2):=-|x_2|\cot a'
\]
be the left endpoint of the horizontal section of $D'$ at height $x_2$.
Starting from the boundary value $H(t_\ell(x_2),x_2)\ge0$ given by part (a), and repeating
the argument from part (b) along the horizontal segment from $t_\ell(x_2)$ to $t_r$, we obtain
\[
H(t_r,x_2)\ge0
\]
for all $|x_2|\le r_0$.
Therefore $H\ge0$ on $\partial U$.

\medskip
\noindent\textit{Step 5: interior comparison and contradiction.}
Inside $U$, the function $\partial_1u$ is harmonic while $\psi_\varepsilon$ is subharmonic,
so $H=\partial_1u-\psi_\varepsilon$ is superharmonic.
Thus, by the minimum principle and Step~4 we deduce that
$H\ge0$ inside $U$, and therefore
\[
\partial_1u\ge \psi_\varepsilon
\qquad\text{in }U.
\]
In particular, for $x=(x_1,0)$ with $x_1>0$ sufficiently small,
\[
\partial_1u(x_1,0)\ge \varepsilon\,\hat w(x_1,0)^\gamma
=
\varepsilon\,x_1^{b\gamma}.
\]
Since $b\gamma<1$, this contradicts the Lipschitz continuity of $\partial_1u$ at the origin
together with $\partial_1u(0)=0$.
Therefore the alternative $\hat v\not\equiv0$ is impossible, and we are in the rigid case
\[
u(x)=\frac{x_2^2}{2}
\qquad\text{in }D\cap B_1.
\]
In particular, $u$ cannot be strictly convex in $D\cap B_1$.
\end{proof}

\begin{proof}[Proof of Theorem~\ref{Fbregu}]
For a minimizer $u$, recall that $v$ denotes the minimal convex extension of $u$ restricted to a connected component of $B_{\varepsilon_0}(x_0)\cap\Omega$, and that $U:=u-v$ satisfies the obstacle problem \eqref{w obstacle}.

By applying Caffarelli's alternative to $U$, every point of $\mathcal T$ has density either $0$ or $\frac12$. Moreover, Lemma~\ref{T-C1} shows that $\mathcal T$ is $C^1$ at every $\frac12$-density point.

It remains to exclude $0$-density points. Suppose by contradiction that $0\in \mathcal T$ is a $0$-density point. Up to a rotation, we may assume that the leaf $I$ containing $0$ is parallel to the $x_1$-axis and that $u|_I=\ell$, where $\ell$ is an affine function.

Fix $\varepsilon>0$. Since the blow-up of $U$ at a $0$-density point has constant Laplacian and is of the form $\frac{c_0}{2}x_2^2$ for some $c_0>0$, the uniform convergence of the blow-up sequence implies that there exists $r_\varepsilon>0$ such that, for every $0<r<r_\varepsilon$,
\[
\Omega'_1\cap B_r \subset B_r \cap \{x\in \Omega : -\varepsilon r \le x_1 \le \varepsilon r,\ x_2 \le 0\}.
\]

Therefore, given $a>0$, by choosing $\varepsilon=\varepsilon(a)>0$ sufficiently small and $0<r<r_\varepsilon$, the function
\[
\frac{(u-\ell)(rx)}{3r^2}
\]
satisfies the assumptions of Theorem~\ref{Rigi}, since $\Delta u=3$ in $\Omega_2$. This contradicts the strict convexity of $u$ in $\Omega_2$, and hence $0$-density points cannot occur on $\mathcal T$.

Consequently, every point of $\mathcal T$ is a $\frac12$-density point, and $\mathcal T$ is locally a $C^1$ embedded curve.
\end{proof}


\begin{thebibliography}{99}




\bibitem{A1996}
M. Armstrong. \emph{Multiproduct nonlinear pricing}. Econometrica 64 (1996), No. 1 , 51--75.

% \bibitem{B1945}
% R. H. Bing, \emph{Generalizations of two theorems of Janiszewski}. Bull. Amer. Math. Soc. 51, (1945). 954--960.

\bibitem{B2001}
I. Blank, 
\emph{Sharp results for the regularity and stability of the free boundary in the obstacle problem.}
Indiana Univ. Math. J. 50 (2001), no. 3, 1077--1112.



 

\bibitem{C1977}
L. A. Caffarelli. \emph{The regularity of free boundaries in higher dimensions}. Acta Math. 139 (1977), no. 3-4, 155--184.

\bibitem{C1998}
L. A. Caffarelli. \emph{The obstacle problem revisited}. J. Fourier Anal. Appl. 4 (1998), no. 4-5, 383--402.

\bibitem{CL2006}
L.A. Caffarelli and P.-L. Lions. Untitled notes. Unpublished, 2006+.

\bibitem{CL2001} 
G. Carlier, T. Lachand-Robert, \emph{Regularity of solutions for some variational problems subject to a convexity constraint}.  Comm. Pure Appl. Math. 54 (2001), no. 5, 583--594


\bibitem{CL2001a} 
G. Carlier, T. Lachand-Robert, and B. Maury. A numerical approach to variational
problems subject to convexity constraint. Numer. Math., 88(2):299-318, 2001.

\bibitem{DKT2001}
G. David, C.  Kenig, T. Toro, 
\emph{Asymptotically optimally doubling measures and Reifenberg flat sets with vanishing constant}.
Comm. Pure Appl. Math. 54 (2001), no. 4, 385--449.

\bibitem{EM2010} 
Ivar Ekeland and Santiago Moreno-Bromberg. An algorithm for computing solutions
of variational problems with global convexity constraints. Numer. Math., 115(1):45-69,
2010.

\bibitem{FS2019}
A. Figalli, J. Serra, 
\emph{On the fine structure of the free boundary for the classical obstacle problem.}
Invent. Math. 215 (2019), no. 1, 311--366.

\bibitem{K1976}
D. Kinderlehrer, \emph{The free boundary determined by the solution to a differential equation}.
Indiana Univ. Math. J. 25 (1976), no. 2, 195--208.

\bibitem{KN1977}
D. Kinderlehrer,   L.  Nirenberg, \emph{Regularity in free boundary problems}. Ann. Scuola Norm. Sup. Pisa Cl. Sci. (4) 4 (1977), no. 2, 373--391.


\bibitem{KLM2005}
H. Koch, G. Leoni, M. Morini, 
\emph{On optimal regularity of free boundary problems and a conjecture of De Giorgi. }
Comm. Pure Appl. Math. 58 (2005), no. 8, 1051--1076.

\bibitem{MO2014} 
Quentin M\'erigot and \'Edouard Oudet. Handling convexity-like constraints in variational
problems. SIAM J. Numer. Anal., 52(5):2466-2487, 2014.

\bibitem{Mc2026}
R. J. McCann, L. D. O'Brien, C. Rankin, 
\emph{On the Hausdorff dimension and singularities of the monopolist's free boundary curve}. arXiv:2603.14100.

\bibitem{MRZ2025}
R. J. McCann, C. Rankin, K. S. Zhang, 
\emph{$C^{1,1}$-regularity for principal-agent problems}.
Adv. Math. 478 (2025), Paper No. 110396.

\bibitem{MRZ20252}
R. J. McCann, C. Rankin, K. S. Zhang, 
\emph{The monopolist's free boundary problem in the plane}. arXiv:2412.15505.

\bibitem{MZ2024}
R. J. McCann, K. S. Zhang, \emph{A duality and free boundary
approach to adverse selection}. Math. Models Methods Appl. Sci., 34(12):2351--2394, 2024. 

\bibitem{R1960}
E. R. Reifenberg, \emph{Solution of the Plateau Problem for $m$-dimensional surfaces of varying topological type}.
Acta Math. 104 (1960), 1--92.

\bibitem{RC98} 
J.-C. Rochet and P. Chon\'e. Ironing, sweeping and multidimensional screening. Econometrica,
66:783-826, 1998.


% \bibitem{T1995}
% T. Toro, \emph{Geometric conditions and existence of bi-Lipschitz parameterizations}.
% Duke Math. J. 77 (1995), no. 1, 193--227.

\end{thebibliography}
\end{document}